\numberwithin{figure}{section}
\numberwithin{table}{section}
\newtheorem{theorem}{Theorem}[section]
\newtheorem{lemma}[theorem]{Lemma}
\newtheorem{prop}[theorem]{Proposition}
\theoremstyle{definition}
\newtheorem{definition}[theorem]{Definition}
\newtheorem{example}[theorem]{Example}
\newtheorem{cor}[theorem]{Corollary}
\theoremstyle{remark}
\newtheorem{remark}[theorem]{Remark}
\numberwithin{equation}{section}
\newfont{\tap}{tap scaled 650}
\def \N{{\mathbb N}}
\def \H{{\mathbb H}}
\def \E{{\mathbb E}}
\def \R{{\mathbb R}}
\def \Z{{\mathbb Z}}
\def \S{{\mathbb S}}
\def \Q{{\mathbb Q}}
\def \A{{\mathcal A}}
\def \A{{f}}
\def \[{[ }
\def \]{] }
\definecolor{dgreen}{rgb}{0,0.5,0}        
\definecolor{dred}{rgb}{0.5,0,0}        
\DeclareMathOperator{\arccosh}{arccosh}
\DeclareMathOperator{\tr}{tr}
\begin{document}

\title{Geometry of mutation classes of rank $3$ quivers}
\author{Anna Felikson and Pavel Tumarkin}
\address{Department of Mathematical Sciences, Durham University, Science Laboratories, South Road, Durham, DH1 3LE, UK}
\email{anna.felikson@durham.ac.uk, pavel.tumarkin@durham.ac.uk}
\thanks{AF was partially supported by EPSRC grant EP/N005457/1}



\begin{abstract}
We present a geometric realization for all mutation classes of quivers of rank $3$ with real weights. This realization is via linear reflection groups for acyclic mutation classes and via  groups generated by $\pi$-rotations for the cyclic ones.
The geometric behavior of the model turns out to be controlled by the Markov constant $p^2+q^2+r^2-pqr$, where $p,q,r$ are the elements of exchange matrix. We also classify skew-symmetric mutation-finite real $3\times 3$ matrices and explore the structure of acyclic representatives in finite and infinite mutation classes.  

\end{abstract}

\maketitle
\setcounter{tocdepth}{1}
\tableofcontents

\section{Introduction and main results}

Mutations of quivers were introduced by Fomin and Zelevinsky in~\cite{FZ1} in the context of cluster algebras. Mutations are involutive transformations decomposing the set of quivers into equivalence classes called {\em mutation classes}. 
Knowing the structure of mutation classes gives a lot of information about the corresponding cluster algebras. It is especially beneficial if there exists a certain combinatorial or geometric model for mutations. This is the case, for example, of quivers for cluster algebras originating from bordered marked surfaces~\cite{FG,GSV,FST,FT}, where mutations are modeled by flips of triangulations. Note that such quivers are {\em mutation-finite} (i.e., their mutation classes are finite).    

There is a model for mutations of quivers containing a representative without oriented cycles in their mutation class (such quivers are called {\em mutation-acyclic}): it was shown in~\cite{S2} that mutations of mutation-acyclic quivers can be modeled by reflections of a tuple of positive vectors in a certain quadratic space (we call this a {\it realization by reflections}). One of the goals of this paper is to construct a model for mutations of {\em mutation-cyclic} quivers of rank $3$ (see Section~\ref{s-def} for precise definitions). 

Mutation classes of rank $3$ quivers were studied in~\cite{ABBS,BBH,BFZd,FeSTu,S,W}. In particular, the {\em Markov constant} $C(Q)=p^2+q^2+r^2-pqr$ for a cyclic quiver $Q$ with weights $(p,q,r)$ was introduced in~\cite{BBH}, and $C(Q)$ was proved to be an invariant of a mutation class. We prove that mutations of mutation-cyclic rank $3$ quivers can be modeled by symmetries (or {\em $\pi$-rotations}) of triples of points on a hyperbolic plane. Combining our results with ones of~\cite{BBH} we obtain the following theorem. 

\setcounter{section}{4}
\setcounter{theorem}{4}
\begin{theorem}
Let $Q$ be a rank $3$ quiver with real weights. Then
\begin{itemize}
\item[(1)] if $Q$ is mutation-acyclic then $C(Q)\ge 0$ and  $Q$ admits a realization by reflections; 
\item[(2)] if $Q$ is mutation-cyclic then $C(Q)\le 4$ and  $Q$ admits a realization by $\pi$-rotations; 
\item[(3)] $Q$ admits both realizations (by reflections and by $\pi$-rotations) if and only if
$Q$ is cyclic with $p,q,r\ge 2$ and $C(Q)=4$.

\end{itemize}
\end{theorem}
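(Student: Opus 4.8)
The plan is to read the whole trichotomy off a single quadratic form, using the mutation-invariance of the Markov constant from \cite{BBH} to transport information between the acyclic and cyclic representatives of the same class. Writing the exchange matrix with signed weights $p=b_{12}$, $q=b_{23}$, $r=b_{31}$, I attach to $Q$ the symmetric matrix $G$ with diagonal entries $2$ and off-diagonal entries $p,q,r$; a direct expansion gives $\det G = 8-2(p^2+q^2+r^2-pqr)=8-2C(Q)$. Since $C$ is a mutation invariant, the sign of $\det G$ is the same at every representative, and the regimes $C(Q)<0$, $0\le C(Q)\le 4$, $C(Q)>4$ correspond to $\det G>8$, $0\le \det G\le 8$, $\det G<0$. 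This determinant is the common bookkeeping device for both realizations. For part (1) I take an acyclic representative $Q'$: at an acyclic orientation exactly one arrow is reversed relative to a cyclic one, so the signed product $pqr$ is negative and $C(Q')=p^2+q^2+r^2-pqr=p^2+q^2+r^2+|pqr|\ge 0$; by invariance $C(Q)\ge 0$. The realization by reflections is then supplied by \cite{S2}: the vectors with Gram matrix $G$ form a genuine root basis of a linear reflection group, and mutation is modeled by its reflections. Thus the bound and the realization follow simultaneously.

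For part (2) I would start from a cyclic representative and invoke the construction of the earlier cyclic sections: the three half-turn centres are placed in $\H^2$, viewed inside the Lorentzian space $\R^{1,2}$, with the weight on each arrow equal to $2\cosh(d/2)$ for the hyperbolic distance $d$ between the corresponding centres. For these to be genuine points one needs $G$ to have Lorentzian signature $(1,2)$, which forces the principal $2\times 2$ minors $4-p^2,\dots$ to be nonpositive (whence $p,q,r\ge 2$) and $\det G\ge 0$, i.e. $C(Q)\le 4$; the bound $C(Q)\le 4$ is exactly the cyclic part of the classification of \cite{BBH}. Mutation is realized by replacing a centre with its image under the $\pi$-rotation about a neighbouring centre, so the construction is valid throughout the mutation-cyclic range and degenerates precisely at the boundary $C(Q)=4$, where $\det G=0$ and the centres acquire a common parabolic limit.

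For part (3) I would compare the two ranges. The reflection realization of part (1) needs $C(Q)\ge 0$ and the $\pi$-rotation realization of part (2) needs $C(Q)\le 4$; moreover the rotation model is available only for mutation-cyclic classes, which by \cite{BBH} are exactly the cyclic quivers with $p,q,r\ge 2$ and $C(Q)\le 4$. The essential point is that a cyclic quiver can carry a reflection realization only when the form degenerates: a cyclic orientation makes the signed product $pqr$ positive, so the three normals of a putative reflection realization have a sign pattern that is not an admissible quasi-Cartan companion of an honest linear reflection group unless the normals are linearly dependent, i.e. unless $\det G=0$, that is $C(Q)=4$. Hence for a mutation-cyclic quiver both realizations coexist precisely when $C(Q)=4$; together with $p,q,r\ge 2$ this gives the stated characterization, whose model case is the Markov quiver $(2,2,2)$. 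Conversely, for a cyclic quiver with $p,q,r\ge 2$ and $C(Q)=4$ the form $G$ is degenerate, so the reflection group becomes the affine (parabolic) one while the $\pi$-rotation configuration attains its parabolic limit, and both realizations are simultaneously present.

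The main obstacle is the forward implication in part (3): showing that cyclicity of the orientation genuinely obstructs a reflection realization away from $C(Q)=4$. The identity $\det G=8-2C(Q)$ does not by itself detect the orientation, so a purely numerical comparison of the intervals $[0,\infty)$ and $(-\infty,4]$ would only yield the overlap $0\le C(Q)\le 4$, not the single value $C(Q)=4$. One therefore has to track the signs of the off-diagonal Gram entries and the induced notion of an admissible companion, and verify that a cyclic sign pattern is compatible with a genuine linear reflection group exactly in the degenerate case. This is where the cyclic/acyclic dichotomy, rather than the mere value of $C(Q)$, does the decisive work.
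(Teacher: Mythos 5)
Part (1) of your proposal is fine and agrees with the paper. The serious problems are in parts (2) and (3). For part (2) the statement you must prove is that every mutation-cyclic quiver \emph{with real weights} satisfies $p,q,r\ge 2$ and $C(Q)\le 4$, and you obtain it by citing \cite{BBH}; but Theorem~1.2 of \cite{BBH} is proved only for integer weights, and extending that bound to real weights is precisely the content of this part of the theorem, so the citation is circular. Your signature computation establishes only the converse implication: \emph{if} the three rotation centres exist in $\H^2$, \emph{then} $p,q,r\ge 2$ and $C(Q)\le 4$. Nothing in your proposal shows that a mutation-cyclic class actually admits such a configuration. The paper gets $p,q,r\ge 2$ from a Chebyshev-polynomial argument (Lemma~\ref{r<2}(a)) and then proves $C(Q)\le 4$ by contradiction: if a mutation-cyclic $Q$ had $C(Q)>4$, Theorem~\ref{existence} would give its class a realization by reflections; passing to the \emph{minimal} representative (whose existence is the one fact from Section~5 of \cite{BBH} that does hold for real weights), one gets three pairwise disjoint lines, and either one of them separates the other two --- then a partial reflection shortens a distance, contradicting minimality --- or none does --- then every choice of normals has an even number of positive inner products, contradicting cyclicity. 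No step in your proposal plays the role of this argument.

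For part (3), your central claim --- that a cyclic quiver can carry a reflection realization only when the normals are linearly dependent, i.e.\ $\det G=0$ --- is false, and it is contradicted by the paper's own Theorem~\ref{existence}: for a cyclic quiver with $p,q,r\ge 2$ and $C(Q)>4$ (e.g.\ $(5/2,5/2,10)$, with $C=50$) one takes three pairwise disjoint lines, one separating the other two; the normals can be chosen with all three inner products positive, an odd (hence cyclic-admissible) sign pattern, and the Gram matrix is nondegenerate of signature $(2,1)$. The genuine obstruction is not the cyclic orientation by itself but the coexistence of the two realizations: a $\pi$-rotation realization forces the triangle inequality $d_r\le d_p+d_q$, i.e.\ $C(Q)\le 4$, while a reflection realization of a cyclic quiver with all weights at least $2$ forces one line to separate the other two (non-separating disjoint triples produce only even sign patterns), whence $d_r\ge d_p+d_q$, i.e.\ $C(Q)\ge 4$; together these give $C(Q)=4$. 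Alternatively one can, as the paper does, invoke the conclusion of part (2) that mutation-cyclic quivers with $C(Q)\ne 4$ admit no reflection realization. You explicitly flag this implication as ``the main obstacle'' and leave it as something ``one has to verify'', so the decisive step is both missing and, in the form you state it, untrue. (Two smaller slips: the weights in the rotation model are $2\cosh d$, not $2\cosh(d/2)$, and at $C(Q)=4$ the three centres become collinear genuine points of $\H^2$ --- they do not acquire a ``common parabolic limit''; the paper's reverse implication in (3) is a concrete construction, collinear centres plus the three lines through them perpendicular to their common geodesic, not an appeal to degeneration.)
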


For an individual mutation-acyclic quiver, the Markov constant also controls the signature of the quadratic space where the mutations are modeled by reflections. The possible signatures for rank $3$ quivers are $(3,0)$, $(2,0,1)$ and $(2,1)$, which can be identified with the sphere $\S^2$, Euclidean plane $\E^2$ and the hyperbolic plane $\H^2$ respectively after considering appropriate projectivization. We prove the following statement.

\setcounter{theorem}{7}
\begin{cor} 
Let $Q$ be a rank $3$ quiver with real weights.
\begin{itemize}
\item[(1)] If $Q$ is acyclic then $C(Q)\ge 0$ and there is a realization by reflections
\begin{itemize}
\item[-] in $\H^2$ if $C(Q)>4$;
\item[-] in $\E^2$ if $C(Q)=4$;
\item[-] in $\S^2$ if $C(Q)<4$.
\end{itemize}

\item[(2)] If $Q=(p,q,r)$ is cyclic with $\min(p,q,r)<2$  or with $r=2$, $p\ne q$, then 
$Q$ is mutation-acyclic,
$C(Q)\ge 0$ and there is a realization by reflections
\begin{itemize}
\item[-] in $\H^2$ if $C(Q)>4$;
\item[-] in $\E^2$ if $C(Q)=4$;
\item[-] in $\S^2$ if $C(Q)<4$.
\end{itemize}

\item[(3)] If $Q=(p,q,r)$ is cyclic with $\min(p,q,r)\ge 2$  then 
\begin{itemize}
\item[-] if $C(Q)>4$ then $Q$ is mutation-acyclic and there is a realization by reflections in $\H^2$; 
\item[-] if $C(Q)<4$ then $Q$ is mutation-cyclic and there is a realization by $\pi$-rotations;
\item[-] if $C(Q)=4$ then $Q$ is mutation-cyclic and there a both realizations. 
\end{itemize}
\end{itemize}
\end{cor}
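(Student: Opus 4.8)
The plan is to derive the corollary from the preceding Theorem by supplying two further ingredients: first, a determinant computation that reads off the signature of the reflection realization of a mutation-acyclic quiver from $C(Q)$; and second, the classification of cyclic rank $3$ quivers into mutation-acyclic and mutation-cyclic ones, which sorts out the weight-hypotheses of parts (2) and (3). Throughout I would exploit that $C$ is a mutation-class invariant, so in every mutation-acyclic class its value may be computed on an acyclic representative, which exists by definition.

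For the $\S^2/\E^2/\H^2$ trichotomy, which occurs in part (1), in part (2), and in the $C(Q)>4$ branch of part (3), I first note that an acyclic quiver is trivially mutation-acyclic, so Theorem part (1) already grants $C(Q)\ge 0$ together with a realization by reflections; it remains only to locate the geometry. Writing the Gram matrix of the three mirror normals of an acyclic representative, with diagonal entries $2$ and off-diagonal entries the signed weights dictated by the acyclic orientation,
\[
M=\begin{pmatrix} 2 & -p & r\\ -p & 2 & -q\\ r & -q & 2\end{pmatrix},
\]
a routine expansion yields $\det M = 2\bigl(4-(p^2+q^2+r^2-pqr)\bigr)=2(4-C(Q))$. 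The one nonroutine fact is already established earlier in the paper: the signature of such a form can only be $(3,0)$, $(2,0,1)$ or $(2,1)$. Since these three are separated exactly by the sign of the determinant — positive, zero, negative — the identity $\det M=2(4-C(Q))$ forces $\S^2$ when $C(Q)<4$, $\E^2$ when $C(Q)=4$, and $\H^2$ when $C(Q)>4$, which is the assertion. (As a check, the acyclic quiver $(1,1,2)$ has $C=4$ and Gram matrix of rank $2$ with two positive eigenvalues, i.e. signature $(2,0,1)$; the Markov quiver $(2,2,2)$, also with $C=4$, is mutation-cyclic and so is not in scope here — which is exactly why the trichotomy, not the determinant alone, is needed.)

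It remains to align the weight-hypotheses of parts (2) and (3) with the mutation-acyclic/cyclic dichotomy, for which I would invoke the classification of cyclic rank $3$ quivers proved earlier: a cyclic quiver $(p,q,r)$ is mutation-cyclic precisely when $\min(p,q,r)\ge 2$ and $C(Q)\le 4$, and mutation-acyclic otherwise. Two remarks make the hypotheses fit. If $r=2$ then $C(Q)=p^2+q^2+4-2pq=(p-q)^2+4$, so the locus $r=2,\ p\ne q$ has $C(Q)>4$ and is mutation-acyclic with realization in $\H^2$, exactly as recorded in part (2); the same identity shows $r=2,\ p=q$ forces $C(Q)=4$, placing it on the mutation-cyclic $C(Q)=4$ line of part (3). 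Granting the classification, part (2) and the $C(Q)>4$ line of part (3) are mutation-acyclic and are closed by the signature analysis above, while for the two remaining lines of part (3) the quiver is mutation-cyclic, so Theorem part (2) supplies the $\pi$-rotation realization; when $C(Q)=4$ with $\min(p,q,r)\ge 2$, Theorem part (3) additionally gives a reflection realization, yielding \emph{both}, whereas for $C(Q)<4$ that same part (3) rules out any reflection realization.

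The principal obstacle is the classification invoked in the last paragraph — that for $\min(p,q,r)\ge 2$ the dichotomy is governed exactly by $C(Q)$ versus $4$, with the borderline $C(Q)=4$ lying on the mutation-cyclic side. This is a statement about dynamics, not geometry: the weight recurrence $r\mapsto pq-r$ preserves $C$, and one must show that $C(Q)>4$ lets some mutation sequence drive a weight below $2$ and open the cycle, while $C(Q)\le 4$ keeps all weights $\ge 2$ and the quiver cyclic forever. This is precisely where the $\pi$-rotation model earns its keep. By contrast, once the signature trichotomy is granted, the geometric content of the corollary reduces to the single determinant identity $\det M=2(4-C(Q))$.
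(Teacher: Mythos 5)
Your strategy coincides with the paper's own proof: invoke Theorem~\ref{uniqueness} and Lemma~\ref{r<2} to settle the mutation-acyclic/mutation-cyclic dichotomy in each part, and then pin down $\S^2$ versus $\E^2$ versus $\H^2$ by the sign of the determinant of the Gram matrix of an acyclic representative. The gap is in how you set up that determinant, and it is not a cosmetic matter of notation. In the paper's conventions, an acyclic quiver with weights $p,q,r>0$ has $C(Q)=C(p,q,-r)=p^2+q^2+r^2+pqr$, and condition (3) of Lemma~\ref{refl} forces any reflection realization of an \emph{acyclic} quiver to have an even number of positive inner products, so its Gram matrix may be taken with off-diagonal entries $-p,-q,-r$; the correct identity is then $\det M=-2\bigl(C(Q)-4\bigr)$, which is exactly what the paper computes. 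Your matrix instead has a single positive off-diagonal entry $+r$ (an odd number), so by that same condition it is a realization of the \emph{cyclic} quiver $(p,q,r)$, and the constant you pair it with, $p^2+q^2+r^2-pqr$, is the cyclic Markov constant. Your ``check'' makes the inconsistency concrete: under the paper's definitions the acyclic quiver with weights $(1,1,2)$ has $C(Q)=8$ and a realization of signature $(2,1)$ (determinant $-8$), i.e.\ in $\H^2$; it is the \emph{cyclic} quiver $(1,1,2)$ that has $C=4$ and the rank-$2$ positive semidefinite Gram matrix you describe (consistently, cyclic $(1,1,2)$ mutates to acyclic $(1,1,1)$, not to acyclic $(1,1,2)$). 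Read literally, your argument would place the acyclic quiver $(1,1,2)$ in $\E^2$ rather than $\H^2$, so the proof of part (1) fails. (If you intended $r$ to carry a sign, so that $r<0$ on acyclic seeds, your formulas become mutually consistent --- but then the check, which takes $r=+2$ for an ``acyclic'' quiver, contradicts that reading.)

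The same slip undermines your appeal to the signature trichotomy. That the signature can only be $(3,0)$, $(2,0,1)$ or $(2,1)$ is a feature of Gram matrices with \emph{non-positive} off-diagonal entries --- the matrices the paper's construction attaches to acyclic seeds --- and it genuinely fails for your sign pattern: for $(p,q,r)=(3,3,3)$ your $M$ has determinant $2(4-0)=8>0$ yet signature $(1,2)$, so a positive determinant does not force a spherical realization there. (You rightly note that such quivers are mutation-cyclic and out of scope, but that is precisely the point: the trichotomy may only be invoked for the all-non-positive matrix of an acyclic seed, which your $M$ is not.) The repair is local and lands you on the paper's proof verbatim: write the Gram matrix of the acyclic representative with entries $-p,-q,-r$, use $C(Q)=p^2+q^2+r^2+pqr$, conclude $\det M=-2\bigl(C(Q)-4\bigr)$, and the rest of your case analysis goes through. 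The remainder of your proposal --- the classification of cyclic quivers by $\min(p,q,r)$ and $C(Q)$, the identity $C=(p-q)^2+4$ when $r=2$, and the use of parts (2) and (3) of Theorem~\ref{uniqueness} for the mutation-cyclic lines of part (3) --- is correct and matches the paper.
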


Throughout the whole paper, we allow a quiver to have real weights, so all the results concern a more general class of quivers than is usually considered. A classification of mutation-finite quivers with integer weights in rank $3$ is extremely simple: there are two quivers in the mutation class of an orientation of $A_3$ Dynkin diagram, two quivers in the mutation class of an acyclic orientation of $A_2^{(1)}$ extended Dynkin diagram, and the Markov quiver. However, in the case of real weights the question is more interesting. We classify all the finite mutation classes in rank $3$ by proving the following theorem.    

\setcounter{section}{6}
\setcounter{theorem}{10}
\begin{theorem}
Let $Q$ be a connected rank $3$ quiver with real weights. Then $Q$ is of finite mutation type if and only if it is mutation-equivalent
to one of the following quivers:
\begin{itemize}
\item[(1)] $(2,2,2)$;
\item[(2)]  $(2\cos (\pi/n ),2\cos(\pi/n),2)$,  $n\in \Z_+$;
\item[(3)] $(1,1,0)$, $(1,\sqrt 2,0)$, $(1,2\cos\pi/5,0)$, $(2\cos\pi/5,2\cos 2\pi/5,0)$, $(1,2\cos2\pi/5,0)$.
\end{itemize}
\end{theorem}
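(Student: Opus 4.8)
The plan is to run the classification through the mutation invariant $C(Q)=p^2+q^2+r^2-pqr$ together with the three geometric regimes supplied by Theorem~5.5 and Corollary~5.8, reading off finiteness of the mutation class from the behaviour of the weight triple $(p,q,r)$ under mutation. Since every mutation preserves $C(Q)$ and acts on the weights essentially by a Vieta-type involution (replacing one coordinate $r$ by $pq-r$, with signs governed by the cyclic/acyclic pattern), the class is finite exactly when the orbit of triples is finite. I would first dispose of the case $C(Q)>4$: such a $Q$ cannot be mutation-cyclic, since mutation-cyclic forces $C\le 4$, so it is mutation-acyclic, and passing to an acyclic representative it is realized by reflections in $\H^2$. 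The hyperbolic reflection group is infinite, and its iterated reflections drive the off-diagonal Gram entries, hence the weights, to infinity, so infinitely many triples occur and $Q$ is not mutation-finite. This reduces everything to $C(Q)\le 4$.

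For $C(Q)<4$ I would split along the cyclic/acyclic dichotomy. A mutation-cyclic $Q$ is cyclic with $\min(p,q,r)\ge 2$ by Corollary~5.8(2), and then Corollary~5.8(3) gives a realization by $\pi$-rotations of $\H^2$; the rotation group is again infinite with unbounded orbits, so no such $Q$ is mutation-finite. If $Q$ is mutation-acyclic then by Corollary~5.8 the three mirrors form a spherical triangle, and the mutation orbit is the orbit of the mirror triple under the generated reflection group, which is finite precisely when that group is a finite reflection group. The task is thus to enumerate the finite rank~$3$ reflection groups generated by three reflections realizing an acyclic quiver. Since a connected acyclic quiver of finite type in rank~$3$ is a path, one mirror pair is orthogonal (a zero weight), and running through the spherical right-angled triangle groups $(2,3,3)=A_3$, $(2,3,4)=B_3$, $(2,3,5)=H_3$ together with the two further presentations of the icosahedral group involving the pentagram angle $2\pi/5$ (whence the weight $2\cos(2\pi/5)$) yields exactly the five quivers of item~(3). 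That these five represent distinct mutation classes follows from the invariant $C$, whose values $2,\,3,\,\tfrac{5+\sqrt5}{2},\,3,\,3-\tfrac{1+\sqrt5}{2}$ separate all but the $B_3$ and pentagonal pair with $C=3$, which are distinguished by their (octahedral versus icosahedral) reflection groups.

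The borderline $C(Q)=4$ is the delicate case, because here the relevant reflection or rotation group is infinite yet the weight orbit may still be finite. If $Q$ is mutation-cyclic then $\min(p,q,r)\ge 2$; analysing the $\pi$-rotation model on the parabolic boundary I would show that the Vieta involutions fix the triple only for the Markov quiver $(2,2,2)$, where $pq-r=r$ in every coordinate, and otherwise send it to infinity, giving item~(1) as the unique mutation-finite cyclic class. If $Q$ is mutation-acyclic it is realized by reflections in $\E^2$; the weight $2$ in the family $(2\cos(\pi/n),2\cos(\pi/n),2)$ corresponds to a parallel pair of mirrors (a parabolic rank~$2$ subconfiguration), so although the Euclidean group contains translations and is infinite, the remaining angle $\pi/n$ is commensurable with $\pi$ and the weights take only the finitely many values $2$ and $2\cos(\pi/n)$; conversely a Euclidean triple with $C=4$ not of this shape has unbounded orbit. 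Thus item~(2) is exactly the mutation-finite acyclic family at $C=4$, and these classes absorb the rank~$3$ affine types, e.g.\ $\widetilde A_2,\widetilde C_2,\widetilde G_2$ appearing at $n=3,4,6$.

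For the converse I would check directly that each listed quiver is mutation-finite: $(2,2,2)$ is fixed by every mutation; the five spherical quivers have finite reflection groups and hence finite orbits of mirror triples; and for the family $(2\cos(\pi/n),2\cos(\pi/n),2)$ the weights lie in the finite set exhibited above. The main obstacle is the $C=4$ regime: unlike the spherical ($C<4$) and hyperbolic ($C>4$) cases, finiteness of the mutation class is \emph{not} equivalent to finiteness of the geometric group, so one must control the weight dynamics by hand and prove the sharp dichotomy between the Markov and affine finite orbits and the unbounded ones. A secondary difficulty is confirming that the two pentagonal presentations of the icosahedral group genuinely yield new mutation classes rather than reproducing $H_3$, for which the invariant $C$ is the decisive tool.
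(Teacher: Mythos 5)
Your overall skeleton --- splitting by the sign of $C(Q)-4$ into spherical, Euclidean and hyperbolic regimes, disposing of the mutation-cyclic side separately, and reading the two infinite families plus five exceptional classes off the geometry --- is the same as the paper's, but in each regime the step that carries the real content is asserted rather than proved, and two of the asserted mechanisms are invalid. In the hyperbolic case you argue: the reflection group is infinite, hence iterated reflections drive the weights to infinity, hence the class is infinite. That inference is false as stated: mutations are \emph{partial} reflections, so the mutation orbit of the triple of lines is not a group orbit, and an infinite group can coexist with a finite mutation class --- your own item (2) is the counterexample, since the group attached to $(2\cos(\pi/n),2\cos(\pi/n),2)$ contains Euclidean translations and is infinite. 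What actually has to be excluded at $C(Q)>4$ is a configuration whose mutated triples keep all weights $\le 2$ (pairwise intersecting lines) forever; the paper does this in Lemma~\ref{H} by a genuinely geometric argument (first forcing angles to be rational multiples of $\pi$ as in Lemma~\ref{rational}, then a minimal-angle argument exploiting that hyperbolic triangles have angle sum less than $\pi$). Nothing in your sketch replaces this.

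In the spherical case your plan rests on the equivalence ``mutation class finite iff the reflection group is finite''. The direction you need, mutation-finite $\Rightarrow$ finite group, is precisely the hard step and you give no argument for it; rationality of angles is not enough, e.g.\ the connected acyclic quiver $(0,2\cos\frac{2\pi}{5},2\cos\frac{2\pi}{5})$ (angles $\pi/2,2\pi/5,2\pi/5$) has all angles $\pi$-rational, yet its reflection group is non-discrete and the quiver is mutation-infinite (its Markov constant $3-\sqrt 5$ matches none of the five classes). The paper bypasses the group entirely: the closure of the weight set under $r'=pq+r$ becomes the trigonometric diophantine equation~(\ref{1}), which is solved by the Conway--Jones classification in Lemma~\ref{S}; this is the missing idea in your outline. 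Your subsidiary claim that an acyclic finite-type representative ``is a path, so one mirror pair is orthogonal'' is also false in this setting: Table~\ref{finite} lists acyclic mutation-finite quivers with no zero weight, such as $(1,1,-2\cos 2\pi/5)$. In the Euclidean case, the weights in the class of $(2\cos(\pi/n),2\cos(\pi/n),2)$ do not stay in $\{2,2\cos(\pi/n)\}$ --- a single mutation already produces $2\cos(2\pi/n)$ --- and the claim that every other $C=4$ configuration has unbounded orbit is again asserted, not proved; the paper's Lemma~\ref{E} derives it from a minimal-angle argument producing a line parallel to $l_0$. Finally, the mutation-cyclic side needs none of your group-theoretic or ``parabolic boundary'' analysis: mutation-cyclic forces all weights $\ge 2$ (Lemma~\ref{r<2}), while mutation-finiteness forces all weights $\le 2$ by the elementary Vieta estimate of Lemma~\ref{2}, so $(2,2,2)$ is the only possibility (Corollary~\ref{cor 2}).
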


Finally, we observe that the structure of acyclic representatives in mutation classes of quivers with real weights is very different from one in integer case. According to~\cite{CK}, all acyclic representatives in any integer mutation class can be mutated to each other via source-sink mutations only, i.e. by mutations which just reverse directions of arrows incident to the mutation vertex. This is not the case for quivers with real weights: already finite mutation classes may have two essentially distinct acyclic representatives (see Table~\ref{finite}), and infinite mutation classes have infinitely many ones. Moreover, we prove an even stronger statement.

\setcounter{section}{7}
\setcounter{theorem}{1}
\begin{theorem}
Let $Q=(p,q,r)$ be a mutation-acyclic quiver with $0<C(Q)<4$.
Then there exists an acyclic quiver $Q'$ which can be obtained from $Q$ in at most
 $\!\lfloor\pi/\!\arcsin\!\frac{\sqrt{4-C(Q)}}{2}\!\rfloor$ mutations. 

\end{theorem}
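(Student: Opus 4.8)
The plan is to invoke the spherical realization furnished by Corollary~8. Since $0<C(Q)<4$, the quiver $Q$ is realized by reflections in $\S^2$: there are unit vectors $v_1,v_2,v_3$ with Gram matrix $G=(\langle v_i,v_j\rangle)$, and for the cyclic presentation $(p,q,r)$ a direct expansion gives $\det G = 1-\tfrac14(p^2+q^2+r^2-pqr)=\tfrac{4-C(Q)}{4}$. Setting $\theta=\arcsin\sqrt{\det G}=\arcsin\frac{\sqrt{4-C(Q)}}{2}\in(0,\pi/2)$, the quantity in the bound becomes purely geometric, and the floor $\lfloor\pi/\theta\rfloor$ should be read as the number of equal angular steps fitting into a half-turn.

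The engine of the argument is that the ambient isometry attached to a full mutation cycle is a rotation by $2\theta$. Writing $s_i=\mathrm{Id}-2\langle\,\cdot\,,v_i\rangle\,v_i$ for the reflection in the mirror $v_i^\perp$, the composite $s_1s_2s_3$ lies in $O(3)$ and is a rotary reflection; I would compute its trace directly as $\tr(s_1s_2s_3)=1-4\det G$. Comparing with the normal form $\tr=-1+2\cos\psi$ for a rotary reflection gives $\cos\psi=1-2\det G=\cos2\theta$, hence $\psi=2\theta$. This is exactly where the Markov constant enters the geometry: the rotation angle of the order-three mutation cycle is $2\theta$, a monotone function of $C(Q)$ that degenerates to $0$ as $C(Q)\to4$, which is why the bound blows up near the Euclidean threshold. (As sanity checks, $G=\mathrm{Id}$ gives $s_1s_2s_3=-\mathrm{Id}$, $\tr=-3$, $\theta=\pi/2$; and $\det G\to0$ gives $\psi\to0$.)

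Next I would translate mutations into this rotation. A single mutation reflects exactly one of the three vectors in the mirror of another, the choice being dictated by the arrow directions, and so acts on $G$ by the corresponding Markov--Vieta map. Restricting to the $C(Q)$-level set of configurations, I would produce an invariant circle coordinate on which this action is conjugate to rotation by a fixed angle, with the cyclic quivers filling one arc and the acyclic ones its complement. Since $Q$ is mutation-acyclic the acyclic arc is nonempty, and stepping around the circle in increments governed by $2\theta$ must eventually leave the cyclic arc; bounding the length of the cyclic arc by a half-turn and dividing by the step size then yields at most $\lfloor\pi/\theta\rfloor$ mutations. I would separately inspect the extremal and degenerate configurations (those where the mirrors $v_i^\perp$ are concurrent, or where an intermediate quiver is already acyclic) to confirm that the sharp constant is the floor rather than a ceiling.

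The main obstacle is this third step: making the passage from the ambient rotation by $2\theta$ to a genuine per-mutation advance on an honest circle fully rigorous, namely exhibiting the invariant circle coordinate, proving the induced map is a true rotation with the correct increment, and pinning down the endpoints of the acyclic arc so that the constant is precisely $\lfloor\pi/\theta\rfloor$ and not merely $O(1/\theta)$. The trace computation fixes the scale $2\theta$ unambiguously, so the residual difficulty is the bookkeeping of which arc is cyclic together with the worst-case alignment of the stepping sequence against that arc.
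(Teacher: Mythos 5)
Your first two steps are sound, and they amount to a spherical counterpart of Proposition~\ref{lambda}: with unit normals one indeed has $\det G=\frac{4-C(Q)}{4}$, and writing $P_i=v_iv_i^T$, $g_{ij}=(v_i,v_j)$, the identities $\tr(P_iP_j)=g_{ij}^2$ and $\tr(P_1P_2P_3)=g_{12}g_{23}g_{31}$ give $\tr(s_1s_2s_3)=C(Q)-3=1-4\det G$, so $s_1s_2s_3$ is a rotary reflection through the angle $2\theta$, where $\theta=\arcsin\frac{\sqrt{4-C(Q)}}{2}$. This quantity is even mutation-invariant, since a mutation replaces the product $s_1s_2s_3$ by the inverse of one of its cyclic permutations (e.g.\ $\mu_2$ yields $s_2s_1s_3=(s_3s_1s_2)^{-1}$), which has the same trace. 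The problem is that the theorem is a statement about \emph{individual} mutations, and everything connecting your invariant $2\theta$ to single mutation steps is concentrated in your third step, which you yourself flag as unproven. That step is not a technicality to be checked; it is the entire content of the theorem, and the mechanism you propose cannot hold as stated.

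Here is the concrete obstruction. A uniform per-mutation advance by $2\theta$ on an invariant circle is impossible, because honest circle dynamics do occur inside the mutation class, but with a non-constant step: alternating $\mu_1$ and $\mu_2$ fixes the line $l_p$, and all images of $l_q,l_r$ lie in the pencil of lines through $O=l_q\cap l_r$, forming a fan with constant angular step $\theta_p=\arccos(p/2)$; alternating $\mu_2$ and $\mu_3$ instead gives a fan with step $\theta_q=\arccos(q/2)$, which in general differs from $\theta_p$. Both would have to equal your constant $2\theta$, a contradiction; note also the arithmetic mismatch that a step of $2\theta$ along an arc of length at most $\pi$ would yield roughly $\pi/(2\theta)$ mutations, half the bound you are trying to prove. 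The paper's proof supplies exactly the two ingredients your outline lacks. First, Lemma~\ref{line}: in the fan picture just described, among the configurations reached by at most $\lfloor\pi/\theta_p\rfloor$ alternating mutations there is an acute-angled triple, which by Remark~\ref{acute} represents an acyclic quiver. Second, a uniform angle bound valid across the whole mutation class: by the spherical sine rule and the second cosine rule, $\sin a\sin\beta\sin\gamma=\frac{1}{2}\sqrt{4-C(Q)}$ for any triangle realizing any quiver in the class, hence every angle $\gamma\in(0,\pi/2]$ satisfies $\sin\gamma\ge\sin\theta$, i.e.\ $\theta_p\ge\theta$. Combining the two gives at most $\lfloor\pi/\theta_p\rfloor\le\lfloor\pi/\theta\rfloor$ mutations. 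Your trace identity is compatible with the second ingredient (it says precisely that $\sin a\sin\beta\sin\gamma$ equals the sine of half the rotary-reflection angle), but it does not by itself yield the pointwise bound on every angle, and nothing in your outline produces the first ingredient, the explicit alternating-mutation strategy.
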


\medskip
\noindent
The paper is organized as follows. 

In Section~\ref{s-refl} we recall basic notions on quiver mutations, and then remind the construction of a realization of mutations of mutation-acyclic quivers via reflections. Section~\ref{s-rot} is devoted to a construction of a realization of mutations via $\pi$-rotations, we also show that mutations of every mutation class can be realized via either reflections or $\pi$-rotations. In Section~\ref{Markov-sec} we use the Markov constant to show that all mutation-cyclic mutation classes admit realizations by $\pi$-rotations, and specify the geometry depending on the value of the Markov constant. Section~\ref{s-discrete} is devoted to a discussion of discreteness of the group generated by either reflections or $\pi$-rotations representing mutations. In Section~\ref{s-fin} we classify finite mutation classes of rank $3$ quivers, and in Section~\ref{s-ac} we discuss the structure of acyclic representatives in mutation classes.    

\subsection*{Acknowledgements}
We would like to thank Philipp Lampe and Lutz Hille for stimulating discussions inspiring the current project. We are grateful to John Parker for referring us to the results of~\cite{CJ} and for a concise introduction to vanishing sums of roots of unity. We also thank Arkady Berenstein for sharing with us the results of~\cite{BFZd}.

\setcounter{section}{1}






\section{Mutation-acyclic quivers via reflections}
\label{s-refl}

In this section we show that mutations of  a mutation-acyclic rank $3$ quiver can be modeled via some linear reflection group acting on a sphere $\S^2$, on a Euclidean plane $\E^2$ or on a hyperbolic plane $\H^2$. The results of this section can be deduced from~\cite{BGZ} (see also~\cite{S2,ST} for more general picture), we give a geometric interpretation and observe that taking real weights instead of integer ones does not affect the proofs. 

\subsection{Quiver mutations}
\label{s-def}
First, we remind the basics on quivers and their mutations.

A {\em quiver} $Q$ is a finite oriented graph with weighted edges containing no loops and no $2$-cycles. We allow the weights to be any positive real numbers. We call the directed edges {\em arrows}, while drawing a quiver we omit weights equal to one. By {\em rank} of $Q$ we mean the number of its vertices.

For every vertex $k$ of a quiver $Q$ one can define an involutive operation  $\mu_k$ called {\it mutation of $Q$ in direction $k$}. This operation produces a new quiver  denoted by $\mu_k(Q)$ which can be obtained from $Q$ in the following way (see~\cite{FZ1}): 
\begin{itemize}
\item
orientations of all arrows incident to the vertex $k$ are reversed; 
\item
for every pair of vertices $(i,j)$ such that $Q$ contains arrows directed from $i$ to $k$ and from $k$ to $j$ the weight of the arrow joining $i$ and $j$ changes as described in Figure~\ref{quivermut}.
\end{itemize} 

\begin{figure}[!h]
\begin{center}
\psfrag{a}{\small $p$}
\psfrag{b}{\small $q$}
\psfrag{c}{\small $r$}
\psfrag{d}{\small $r'$}
\psfrag{k}{\small $k$}
\psfrag{mu}{\small $\mu_k$}
\epsfig{file=./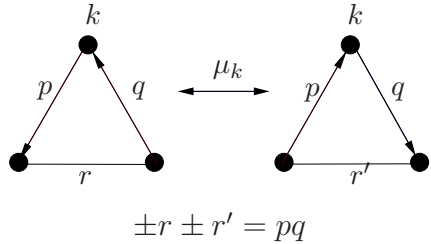,width=0.35\linewidth}\\
\medskip
$\pm{r}\pm{r'}={pq}$
\caption{Mutations of quivers. The sign before ${r}$ (resp., ${r'}$) is positive if the three vertices form an oriented cycle, and negative otherwise. Either $r$ or $r'$ may vanish. If $pq$ is equal to zero then neither the value of $r$ nor orientation of the corresponding arrow changes.}
\label{quivermut}

\end{center}
\end{figure}

Given a quiver $Q$, its {\it mutation class} is a set of all quivers obtained from $Q$ by all sequences of iterated mutations. All quivers from one mutation class are called {\it mutation-equivalent}. A quiver is called {\em minimal} if the sum of its weights is minimal amongst the whole mutation class. 

Quivers without loops and $2$-cycles are in one-to-one correspondence with real skew-symmetric matrices $B=\{b_{ij}\}$, where $b_{ij}>0$ if and only if there is an arrow from $i$-th vertex to $j$-th one with weight $b_{ij}$. In terms of the matrix $B$ the mutation $\mu_k$ can be written as $\mu_k(B)=B'$, where
$$b'_{ij}=\left\{
           \begin{array}{ll}
             -b_{ij}, & \hbox{ if } i=k \hbox{ or } j=k; \\
             b_{ij}+\frac{|b_{ik}|b_{kj}+b_{ik}|b_{kj}|}{2}, & \hbox{ otherwise.}\\
           \end{array}
         \right.
$$
This transformation is called a {\it matrix mutation}.

A rank $3$ quiver (and the corresponding $3\times 3$ matrix) is called {\em cyclic} if its arrows compose an oriented cycle, and is called {\em acyclic} otherwise. A quiver (and the matrix) is {\em mutation-cyclic} if all representatives of the mutation class are cyclic, and {\em mutation-acyclic} otherwise.

\newpage

\subsection{Construction}
\subsubsection{The initial configuration.}
Let $Q$ be an acyclic rank $3$ quiver and let $B$ be the corresponding skew-symmetric $3\times 3$ matrix (we will assume $b_{ij}\ne 0$). Consider a symmetric matrix with non-positive off-diagonal entries $M(B)=(m_{ij})$, where 
$$ m_{ii}=2, \qquad \quad  m_{ij}=-|b_{ij}| \ \text{ if } i\ne j. $$
This matrix defines a quadratic form and we may consider $M(B)$ as a Gram matrix (i.e., the matrix of inner products) of some triple of vectors $(v_1,v_2,v_3)$  in a quadratic space $V$ of the same signature as $M(B)$ has. Considering the projectivization $P(V)=V/\R_+$, the images $\Pi_i$ of the hyperplanes $\pi_{i}=v_i^\perp$ in $P(V)$ define lines in a space $X$ of constant curvature, where $X$ is the sphere $\S^2$ if $M(B)$ is positive definite, a Euclidean plane $\E^2$ if $M(B)$ is degenerate positive semidefinite, or $\H^2$ if $M(B)$ is of signature $(2,1)$. The scalar product $(v_i,v_j)$ characterizes the mutual position of the corresponding lines:
$$  
|(v_i,v_j)|=\begin{cases}  
2\cos\angle (\Pi_i,\Pi_j)<2 & \text{ if  $\Pi_i$ intersects $\Pi_j$,}\\ 
2&  \text{ if  $\Pi_i$ is parallel to $\Pi_j$,}\\
2\cosh d(\Pi_i,\Pi_j)>2  & \text{ otherwise,} \end{cases}
$$
where $d(\Pi_i,\Pi_j)$ is the distance between diverging planes in $\H^2$.

Consider also the halfplanes
$$\Pi_i^-=\{u\in P(V)\ | \ (u,v_i)<0  \}.$$
Let $F=\Pi_1^-\cap \Pi_2^-\cap \Pi_3^-$ be the intersection of these half-planes. 
Since $(v_i,v_j)\le 0$, $F$ is an acute-angled domain (i.e. $F$ has no obtuse angles). 

\subsubsection{Reflection group.}
Given a vector $v_i\in V$  with $(v_i,v_i)=2$ one can consider a {\it reflection } 
$$r_i(u)=u-(u,v_i)v_i$$ 
with respect to $\Pi_i=v_i^\perp$.
It is straightforward to see that $r_i$ preserves the scalar product in $V$ 
(and hence, acts on $X$ as an isometry) 
and that $r_i(v_i)=-v_i$, i.e. that $r_i$ is an isometry of $X$ preserving $\Pi_i$ and interchanging the half-spaces into which $X$ is decomposed by $\Pi_i$.

We denote by $G$ the group generated by reflections $r_1,r_2,r_3$.  

\subsubsection{Mutation.}
The initial acyclic quiver $Q$ (and the initial matrix $B$) corresponds to the initial set of generating reflections in the group $G$ and to the initial domain $F\subset P(V)$.
Applying mutations, we will obtain other sets of generating reflections in $G$ as well as other domains in $P(V)$.

More precisely, define mutation of the set of generating reflections by partial conjugation:
$$
\mu_k(r_j)=\begin{cases}  
r_kr_jr_k & \text{if  $b_{jk}>0$,}\\ 
r_j&  \text{otherwise.}\end{cases}
$$
Consequently, the mutation of the triple of vectors (and of the triple of lines) is defined by partial reflection:
$$
\mu_k(v_j)=\begin{cases}  
v_j-(v_j,v_k)v_k & \text{if  $b_{jk}>0$,}\\ 
-v_k & \text{if $j=k$,}\\
v_j&  \text{otherwise.}\end{cases}
$$

\begin{example}
\label{ex-3}
Let $Q$ be a rank $3$ acyclic quiver (corresponding to a matrix with $b_{ij}>0$ for $i<j$) 
and let $\{v_1,v_2,v_3\}$ be the corresponding vectors in $V$ (see Fig.~\ref{3-ac}). 
We will assume that $V$ is an indefinite space (so, $P(V)$ contains a projective model of $\H^2$),
the points of $\H^2$ are the ones inside the circle.
Consider the mutation $\mu_2$: it reflects $v_1$, changes the direction of $v_1$ and preserves $v_3$.

\begin{figure}[!h]
\begin{center}
\psfrag{p}{\scriptsize \color{dred} $p$}
\psfrag{q}{\scriptsize \color{dred}  $q$}
\psfrag{r}{\scriptsize  \color{dred} $r$}
\psfrag{pq}{\scriptsize \color{dred}  $pq+r$}
\psfrag{1}{\scriptsize $1$}
\psfrag{2}{\scriptsize $2$}
\psfrag{3}{\scriptsize $3$}
\psfrag{p.}{\scriptsize $p$}
\psfrag{q.}{\scriptsize $q$}
\psfrag{r.}{\scriptsize $r$}
\psfrag{pq.}{\scriptsize $pq+r$}
\psfrag{1_}{\color{dgreen} \scriptsize $v_1$}
\psfrag{2_}{\color{dgreen}\scriptsize $v_2$}
\psfrag{3_}{\color{dgreen}\scriptsize $v_3$}
\psfrag{1'}{\color{dgreen}\scriptsize $v_1'$}
\psfrag{2'}{\color{dgreen}\scriptsize $v_2'$}
\psfrag{3'}{\color{dgreen}\scriptsize $v_3'$}
\psfrag{t}{\scriptsize $\mu_2$}
\epsfig{file=./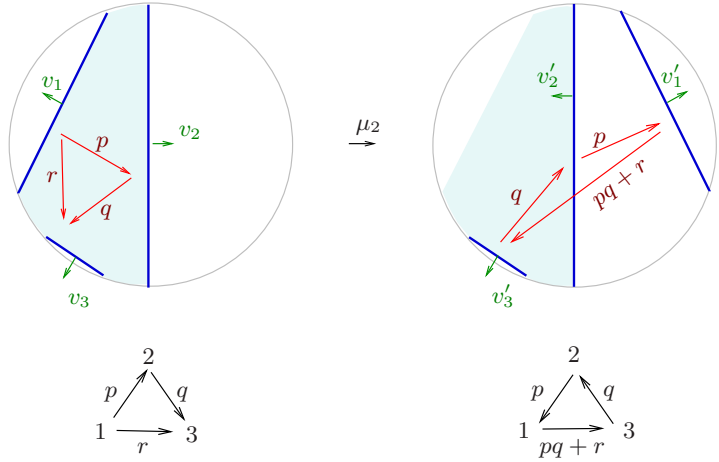,width=0.59\linewidth}
\caption{Mutation of the triple of lines agreeing with the mutation of a quiver (see Fig.~\ref{quivermut})}
\label{3-ac}
\end{center}
\end{figure}

\end{example}

\begin{remark}
\label{signed-mut}
The mutation of a configuration of lines as defined above is not an involution. 
This can be fixed as follows:
choose any vector $u\in V$ and define mutation $\mu_k$ 
\begin{itemize}
\item[-] as above (i.e. by reflection of $v_j$ if  $b_{jk}>0$)  for the cases when $(u,v_k)<0$;
\item[-] by reflection of $v_j$ if  $b_{jk}<0$ for the cases when  $(u,v_k)>0$.
\end{itemize}
Note that the configurations of lines obtained as a result of application of two versions of the definition differ by reflection in $v_k$ only. Throughout the paper we will  only use the configurations up to conjugation by an element of $G$, so it will be sufficient for us to use the initial definition.

\end{remark}

\subsection{Geometric realization by reflections}

\begin{lemma}[\cite{BGZ}, Corollary of Proposition 3.2]
\label{refl}
Let $Q$ be a rank $3$ quiver, and let $B$ be the corresponding skew-symmetric matrix. 
Let $V=\langle v_1,v_2,v_3\rangle $ be a quadratic space and suppose that
\begin{itemize}

\item[(1)] $(v_i,v_i)=2$ for $i=1,2,3$;
\item[(2)] $|(v_i,v_j)|=|b_{ij}|$ for $1\le i<j\le 3$;
\item[(3)] if $(v_i,v_j)\ne 0$ for all $i\ne j$, then the number of pairs $(i,j)$ such that  $(v_i,v_j)>0$ is even if $Q$ is acyclic and odd if $Q$ is cyclic. 
\end{itemize}

Then the set of vectors ${\bf v'}=(\mu_k(v_1),\mu_k(v_2),\mu_k(v_3))$ satisfies conditions (1)--(3) for $B'=\mu_k(B)$.

\end{lemma}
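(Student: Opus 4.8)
The plan is to verify conditions (1)--(3) directly for the mutated triple $\mathbf{v}'=(\mu_k(v_1),\mu_k(v_2),\mu_k(v_3))$, matching each condition against the matrix mutation rule $\mu_k(B)=B'$. By symmetry I may fix $k=2$ without loss of generality, writing $\mathbf{v}'=(v_1',v_2',v_3')$ where $v_2'=-v_2$, and where $v_j'=v_j-(v_j,v_2)v_2$ for the index $j$ with $b_{j2}>0$ and $v_j'=v_j$ otherwise. Condition (1) is the easiest: for the reflected vector $v_j'=v_j-(v_j,v_2)v_2$ one computes $(v_j',v_j')=(v_j,v_j)-2(v_j,v_2)^2/(v_2,v_2)\cdot(v_2,v_2)+\dots$ — more cleanly, since $r_2$ is an isometry and $v_j'=r_2(v_j)$ in that case, $(v_j',v_j')=(v_j,v_j)=2$; the cases $v_2'=-v_2$ and $v_j'=v_j$ are immediate. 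So (1) holds automatically because mutation of a reflected vector is an honest isometric reflection.

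Next I would establish condition (2), that $|(v_i',v_j')|=|b_{ij}'|$. For any pair involving the index $2$, say $(v_2',v_\ell')$, flipping the sign $v_2'=-v_2$ matches the sign flip $b_{2\ell}'=-b_{2\ell}$ in the mutation rule, so the absolute values agree. The substantive pair is $(v_1',v_3')$, i.e.\ the two ``outer'' vectors, exactly one of which (the one with positive weight to $k=2$) gets reflected. Expanding $(v_1',v_3')$ using bilinearity and $(v_2,v_2)=2$ yields $(v_1,v_3)\pm(v_1,v_2)(v_2,v_3)$, and I must check that the magnitude of this equals $|b_{13}'|$, where the Markov-type relation $\pm r\pm r'=pq$ of Figure~\ref{quivermut} governs $b_{13}'$. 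This is the computational heart of the verification, and its outcome hinges entirely on the signs, which is precisely what condition (3) tracks.

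The main obstacle, and where I would spend the most care, is condition (3): the parity statement on the number of positive inner products, equivalently the claim that acyclicity/cyclicity of the quiver is faithfully encoded by the sign pattern $\mathrm{sgn}(v_i,v_j)$ and is transformed correctly under $\mu_2$. The strategy is to set up a sign bookkeeping. Assign to each ordered pair the sign $\varepsilon_{ij}=\mathrm{sgn}(v_i,v_j)$ and note that mutation $\mu_2$ reflects exactly the vectors adjacent to $2$ across $v_2$, while negating $v_2$ itself. I would track how each of the three signs $\varepsilon_{12},\varepsilon_{23},\varepsilon_{13}$ changes: the pairs through vertex $2$ flip sign owing to $v_2\mapsto -v_2$, and $\varepsilon_{13}$ may change according to whether the term $(v_1,v_2)(v_2,v_3)$ dominates or reinforces $(v_1,v_3)$, mirroring the sign convention in Figure~\ref{quivermut}. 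The key identity to isolate is that the product $\varepsilon_{12}\varepsilon_{23}\varepsilon_{13}$ — whose value $+1$ versus $-1$ distinguishes acyclic from cyclic triples of lines — transforms under $\mu_2$ in lockstep with the quiver's cyclic/acyclic type under the combinatorial mutation rule.

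Finally I would assemble these pieces: having matched (1), (2) and the parity of (3) term-by-term against the definition of $B'=\mu_k(B)$, the conclusion that $\mathbf{v}'$ realizes $B'$ follows. Since the cases $b_{jk}>0$ versus $b_{jk}<0$ are symmetric under the sign-reversal noted in Remark~\ref{signed-mut}, and the case of a vanishing inner product corresponds to a vanishing weight (so the relevant pair simply carries along unchanged), a short case check disposes of the degenerate situations. I expect the argument to be essentially a careful but routine sign analysis once the reflection-is-an-isometry observation trivializes~(1); the genuine content lies in showing the sign pattern of condition~(3) is compatible with the Markov-type relation defining the mutated weight.
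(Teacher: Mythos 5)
A preliminary remark: the paper does not actually prove Lemma~\ref{refl} --- it quotes the statement from \cite{BGZ} (Corollary of Proposition~3.2) and merely observes that the proof given there for integer skew-symmetrizable matrices works verbatim for real skew-symmetric ones. So a blind direct verification like yours is the right kind of argument, and your skeleton is the correct one: condition (1) is immediate because partial reflection is an isometry, condition (2) reduces to the single ``outer'' pair, and condition (3) is governed by the sign product $\varepsilon_{12}\varepsilon_{23}\varepsilon_{13}$, which is $+1$ precisely for cyclic sign patterns.

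However, your plan contains a step that is false, and it sits exactly where you locate the ``genuine content.'' You claim that under $\mu_2$ both pairs through vertex $2$ flip sign because $v_2\mapsto-v_2$. Not so: if $b_{12}>0$ then, using $(v_2,v_2)=2$,
$$
\bigl(\mu_2(v_1),\mu_2(v_2)\bigr)=\bigl(v_1-(v_1,v_2)v_2,\,-v_2\bigr)=-(v_1,v_2)+(v_1,v_2)(v_2,v_2)=(v_1,v_2),
$$
i.e.\ the pair containing the \emph{reflected} vector keeps its sign (equivalently, $-v_2=r_2(v_2)$, so this pair is the $r_2$-image of the old one); only the pair containing the \emph{unreflected} outer vector flips. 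This asymmetry is what makes (3) work. Writing $a=(v_1,v_2)$, $b=(v_2,v_3)$, $c=(v_1,v_3)$ and $p=|a|$, $q=|b|$, $r=|c|$, the correct new sign product is $\mathrm{sgn}\bigl(a\cdot(-b)\cdot(c-ab)\bigr)=-\mathrm{sgn}(ab)\,\mathrm{sgn}(c-ab)$; since (3) for $Q$ says $\mathrm{sgn}(c)=\mathrm{sgn}(ab)$ exactly when $Q$ is cyclic, this equals $\mathrm{sgn}(pq-r)$ in the cyclic case and $+1$ in the acyclic case, matching Figure~\ref{quivermut}. With your rule (both signs flipping) one gets $\mathrm{sgn}(r-pq)$ in the cyclic case --- the wrong parity whenever $pq\neq r$ --- so the bookkeeping as stated would derail the proof rather than complete it. Beyond this, the decisive computation is only announced, never performed: you never show that (3) forces $c$ and $ab$ to have equal signs iff $Q$ is cyclic (whence $|c-ab|$ equals $|pq-r|$ or $pq+r$, matching $|b_{13}'|$), nor that the resulting parity agrees with the cyclicity of $\mu_2(Q)$. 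Finally, the cases where vertex $2$ is a sink or a source (both or neither outer vector reflected, which your ``exactly one gets reflected'' excludes) are easy but are genuinely separate cases, not the vanishing-weight degeneration your closing remark covers.
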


The statement of the lemma is proved in~\cite{BGZ} for integer skew-symmetrizable matrices, however, their proof works for real skew-symmetric matrices as well. One can also note that for any skew-symmetric matrix $B$ there exists a quadratic three-dimensional space $V$ and a triple of vectors $v_1,v_2,v_3\in V$ satisfying assumptions of the lemma.

\begin{definition}
\label{def-realization-refl}
Let $B$ be a $3\times 3$ skew-symmetric matrix.
We say that a tuple of vectors ${\bf v}=(v_1,v_2,v_3)$ is a {\it geometric realization by reflections} of $B$ if conditions (1)-(3)
of Lemma~\ref{refl} are satisfied. We also say that $\bf v$ provides a {\it realization} of the mutation class of $B$ if the mutations of 
$\bf v$ via partial reflections agree with the mutations of $B$, i.e. if conditions (1)--(3) are satisfied in every seed.

Given a geometric realization  $(v_1,v_2,v_3)$ of $B$, we consider the lines $l_i=\{u \   | \ (u,v_i)=0 \}$. The (unordered) 
triple of lines $(l_1,l_2,l_3)$  
will be also called a {\it geometric realization by reflections} of $B$.
(This definition makes sense as properties (1)-(3) do not depend on the choice of vectors orthogonal to  $(l_1,l_2,l_3)$: changing the sign of a vector changes signs of precisely two inner products, so property (3) stays unaffected.)
A realization of $B$ will be also called a realization of the corresponding quiver $Q$.
\end{definition}

\begin{cor}
\label{cor: ac by relf}
Every acyclic mutation class has a geometric realization by reflections.

\end{cor}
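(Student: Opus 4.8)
The plan is to combine the two facts already established in this subsection. Lemma~\ref{refl} tells us that conditions (1)--(3) are preserved under a single mutation, and the remark immediately following it asserts that for \emph{any} skew-symmetric $3\times 3$ matrix $B$ there exists a quadratic space $V$ together with a triple $(v_1,v_2,v_3)$ satisfying those conditions. So the strategy is: start from an acyclic representative $B_0$ of the given mutation class, construct an initial realization $(v_1,v_2,v_3)$ of $B_0$ directly, and then use Lemma~\ref{refl} inductively to propagate the realization along every mutation path.

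First I would pick an acyclic representative $B_0$ of the mutation class (one exists by definition of ``acyclic mutation class''). For the initial realization I would use the explicit construction from the preceding subsection: form the symmetric matrix $M(B_0)=(m_{ij})$ with $m_{ii}=2$ and $m_{ij}=-|b_{ij}|$ for $i\neq j$, and realize it as the Gram matrix of a triple $(v_1,v_2,v_3)$ in a quadratic space $V$ of the appropriate signature. By construction $(v_i,v_i)=2$ and $|(v_i,v_j)|=|b_{ij}|$, so conditions (1) and (2) hold. For condition (3), note that all off-diagonal inner products $(v_i,v_j)=-|b_{ij}|$ are non-positive, hence the number of pairs with $(v_i,v_j)>0$ is zero, which is even; since $B_0$ is acyclic this is exactly what condition (3) demands. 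Thus $(v_1,v_2,v_3)$ is a geometric realization by reflections of $B_0$ in the sense of Definition~\ref{def-realization-refl}.

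It remains to check that this realization propagates to every seed, i.e. that conditions (1)--(3) continue to hold after an arbitrary sequence of mutations. This is precisely the content of Lemma~\ref{refl}: if $(v_1,v_2,v_3)$ realizes $B$, then $(\mu_k(v_1),\mu_k(v_2),\mu_k(v_3))$ realizes $\mu_k(B)$. I would therefore argue by induction on the length of the mutation sequence $\mu_{k_n}\cdots\mu_{k_1}$ taking $B_0$ to a given matrix $B$ in the class: the base case is the initial realization above, and each inductive step is a single application of the lemma. Hence every matrix in the mutation class of $B_0$ admits a realization obtained by applying the corresponding partial reflections, so the whole mutation class has a geometric realization by reflections.

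I do not expect a genuine obstacle here, since both ingredients are already in place; the only point requiring slight care is the bookkeeping in the inductive step, namely confirming that the realization attached to a matrix is well-defined independently of the mutation path chosen to reach it. This is handled by working, as the paper does, with configurations up to conjugation by the group $G$ (cf.\ Remark~\ref{signed-mut}): any two mutation paths from $B_0$ to the same matrix yield triples of vectors differing only by an element of $G$, and conditions (1)--(3) are invariant under such conjugation, so the realization of each seed is unambiguous.
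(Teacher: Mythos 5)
Your proposal is correct and follows essentially the same route as the paper: realize the acyclic seed via the Gram-matrix construction (where all off-diagonal inner products are $\le 0$, so condition (3) holds), then propagate through the mutation class by iterating Lemma~\ref{refl}. The only difference is that the paper states this in two lines, leaving the induction implicit; your extra worry about path-independence is harmless but unnecessary, since a realization of the mutation class is defined seed-by-seed along mutation paths rather than by assigning a canonical triple to each matrix.
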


\begin{proof}
In view of Lemma~\ref{refl} it is sufficient to find a geometric realization for an acyclic seed.
This is provided by the construction above (notice that for the initial acyclic seed we get $(v_i,v_j)<0$, so condition (3) holds).

\end{proof}


\begin{remark}
\label{acute}
In contrast to quivers with integer weights, mutation classes of quivers with real weights may have more than one acyclic representative (modulo sink-source mutations), we postpone the discussion of this fact till the last section. Meanwhile, we would like to make one observation.

As we have mentioned above, a triple of lines corresponding to an initial acyclic quiver determines an acute-angled domain. In fact, this holds for any acyclic quiver in the mutation class as well: this immediately follows from Property (3) of Lemma~\ref{refl}.

Moreover, the same Property (3) implies the converse: if a triple of lines determines an acute-angled domain, then it cannot represent a cyclic quiver. Thus, acyclic quivers in the mutation class are exactly those represented by acute-angled configurations. 

\end{remark}

\section{Mutation-cyclic quivers via $\pi$-rotations}
\label{s-rot}
Similarly to acyclic mutation classes realized by partial reflections in $\S^2$, $\E^2$ or $\H^2$, we will use $\pi$-rotations in $\H^2$ to build a geometric realization for mutation-cyclic classes. 

\newpage
\subsection{Construction}

\subsubsection{The initial configuration.}
Let $Q$ be a cyclic rank $3$ quiver and let $B$ be the corresponding  skew-symmetric  $3\times 3$ matrix (we will assume $b_{12},b_{23},b_{31}>0$). 
We will also assume $|b_{ij}|\ge 2$ for all $i\ne j$ (in view of Lemma~\ref{r<2} below this is always the case for quivers in mutation-cyclic classes).

Let $V$ be a quadratic space of signature $(2,1)$ and suppose that $v_1,v_2,v_3$ are negative vectors with
$$
(v_i,v_i)=-2, \qquad |(v_i,v_j)|=|b_{ij}| \text{ for $i\ne j$}.
$$
Geometrically, each of $v_i$ corresponds to some point  in the hyperbolic plane $\H^2$, the scalar product $(v_i,v_j)$ represents the distance $d(v_i,v_j)$ between the corresponding points:
$$
(v_i,v_j)=-2\cosh d(v_i,v_j).
$$  

It is not immediately evident that for every mutation-cyclic matrix $B$ there is a corresponding triple of vectors $v_1,v_2, v_3$, we will prove this in Section~\ref{Markov-sec}.

\subsubsection{$\pi$-rotations group.}
With every point $x\in \H^2$ (i.e. with every negative vector $v\in V$, $(v,v)<0$) we can associate a rotation by $\pi$ around $x$
(also called point symmetry, or point reflection, or a central symmetry): it is an isometry which preserves the point $x$, takes every line through $x$ to itself and interchanges the rays from $x$ on the line. It is easy to check that a $\pi$-rotation $R_v$ about $v\in V$, $(v,v)=-2$  
 acts as 
$$
R_v(u)=u'=-u-(u,v)v.
$$  
Given three points $v_1,v_2,v_3$, we can generate a group $G=\langle R_{v_1},R_{v_2},R_{v_3}\rangle$ acting on $\H^2$.

\subsubsection{Mutation.}
The initial matrix $B$ corresponds to the initial set of generating rotations in the group $G$ and to the initial triple of points in $\H^2$.
Applying mutations, we will obtain other sets of generating rotations of $G$ as well as other triples of points.

More precisely, define mutation of the set of generating rotations by partial conjugation, in exactly the same way as for reflections:
$$
\mu_k(r_j)=\begin{cases}  
r_kr_jr_k & \text{if  $b_{jk}>0$,}\\ 
r_j&  \text{otherwise.}\end{cases}
$$
Consequently, the mutation of the triple of points is defined by partial rotation:
$$
\mu_k(v_j)=\begin{cases}  
-v_j-(v_j,v_k)v_k & \text{if  $b_{jk}>0$,}\\ 
v_i&  \text{otherwise.}\end{cases}
$$

\begin{example} In Fig.~\ref{ex-rot} we show how a triple of points changes under mutation.
\begin{figure}[!h]
\begin{center}
\psfrag{dp}{\scriptsize $d_{kj}$}
\psfrag{dp'}{\scriptsize $d_{kj'}=d_{kj}$}
\psfrag{dq}{\scriptsize $d_{ik}$}
\psfrag{dr}{\scriptsize $d_{ij}$}
\psfrag{dr'}{\scriptsize $d_{ij'}$}
\psfrag{i}{\scriptsize $v_i$}
\psfrag{j}{\scriptsize $v_j$}
\psfrag{k}{\scriptsize $v_k$}
\psfrag{j'}{\scriptsize $v_{j'}$}
\epsfig{file=./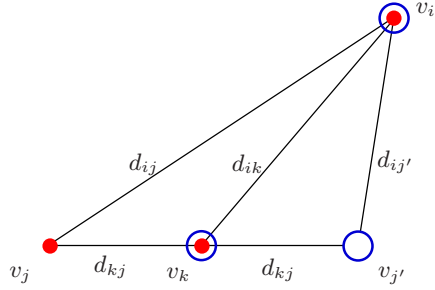,width=0.35\linewidth}
\caption{Mutation of a triple of points. Distances $d_{\alpha\beta}$ between points $v_\alpha$ and $v_\beta$ are equal to 
$\arccosh\frac{|b_{\alpha\beta}|}{2}$.}
\label{ex-rot}
\end{center}
\end{figure}

\end{example}

\subsection{Geometric realization by $\pi$-rotations}

\begin{lemma}
\label{rot}
Let $Q$ be a cyclic quiver of rank $3$ with all weights greater or equal to $2$, let $B$ be the corresponding skew-symmetric matrix with $b_{12},b_{23},b_{31}>0$, and let $V$ be the corresponding quadratic space. Suppose that $v_1,v_2,v_3\in V$ are vectors satisfying
\begin{itemize}
\item[(1)] $(v_i,v_i)=-2$,
\item[(2)] $(v_i,v_j)=-|b_{ij}|$ for $1\le i<j\le 3$;
\end{itemize}
Then $Q'=\mu_k(Q)$ is a cyclic quiver with weights  greater or equal to $2$, and the set of vectors ${\bf v'}=(\mu_k(v_1),\mu_k(v_2),\mu_k(v_3))$ satisfies conditions (1)-(2) for $B'=\mu_k(B)$.
\end{lemma}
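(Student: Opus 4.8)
The plan is to reduce to a single mutation, compute the mutated vectors explicitly, and then verify conditions (1)--(2) by a direct bilinear computation; the only delicate point is the inequality that forces $Q'$ to be cyclic \emph{with} the new weight $\ge 2$. By the cyclic symmetry of the three labels I may assume $k=2$; write $p=b_{12}$, $q=b_{23}$, $r=b_{31}$, all $\ge 2$. Applying the partial-rotation rule and using $(v_1,v_2)=-p$ gives
$$v_1'=\mu_2(v_1)=-v_1-(v_1,v_2)v_2=-v_1+pv_2=R_{v_2}(v_1),\qquad v_2'=v_2,\qquad v_3'=v_3.$$

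First I would check condition (1) and the two unchanged off-diagonal products by expanding bilinearly with $(v_i,v_i)=-2$:
$$(v_1',v_1')=(v_1,v_1)-2p(v_1,v_2)+p^2(v_2,v_2)=-2,\qquad (v_1',v_2')=-(v_1,v_2)+p(v_2,v_2)=-p,$$
while $(v_2',v_3')=(v_2,v_3)=-q$. Since $\mu_2$ leaves $|b_{12}|$ and $|b_{23}|$ unchanged, these equal $-|b_{12}'|$ and $-|b_{23}'|$. The one remaining product is $(v_1',v_3')=-(v_1,v_3)+p(v_2,v_3)=r-pq$, and everything hinges on its sign.

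The key step is to show $pq-r\ge 2$, which will simultaneously guarantee that $Q'$ is cyclic and that its new weight is $\ge 2$. I would argue geometrically. Condition (2) forces $(v_i,v_j)=-|b_{ij}|<0$, so any two of $v_1,v_2,v_3$ are timelike vectors lying on the same sheet of the hyperboloid $\{\,v:(v,v)=-2\,\}$. A direct expansion gives $(R_{v_2}(u),v_2)=-(u,v_2)-(u,v_2)(v_2,v_2)=(u,v_2)$ for every $u$, so $R_{v_2}$ preserves the sign of $(\,\cdot\,,v_2)$ and hence maps each sheet to itself; in particular $v_1'=R_{v_2}(v_1)$ lies on the same sheet as $v_1$, hence as $v_3$. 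For two vectors on one sheet normalized by $(\,\cdot\,,\,\cdot\,)=-2$, the reverse Cauchy--Schwarz inequality yields $(v_1',v_3')\le -2$, that is $r-pq\le -2$, so $pq-r\ge 2$.

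Finally I would assemble the conclusion. Computing $B'=\mu_2(B)$ gives $b_{12}'=-p$, $b_{23}'=-q$ and $b_{13}'=-r+\tfrac{|b_{12}|b_{23}+b_{12}|b_{23}|}{2}=pq-r>0$, so the arrows of $Q'$ form the oriented cycle $1\to 3\to 2\to 1$; thus $Q'$ is cyclic, with unchanged weights $p,q\ge 2$ and new weight $|b_{13}'|=pq-r\ge 2$. Moreover $(v_1',v_3')=r-pq=-(pq-r)=-|b_{13}'|$, so condition (2) holds for the last pair as well, and together with the computations above this gives (1)--(2) for all pairs with respect to $B'$. I expect the main obstacle to be precisely the sheet-preservation observation combined with the reverse Cauchy--Schwarz bound: this single geometric fact is what upgrades the hypotheses $|b_{ij}|\ge 2$ into the strict bound $pq-r\ge 2$, thereby ruling out the acyclic case $pq<r$ and keeping the mutation inside the mutation-cyclic regime.
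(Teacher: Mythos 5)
Your proof is correct and follows essentially the same route as the paper's: reduce by symmetry to a single mutation $\mu_2$, compute the mutated inner products bilinearly, and use the fact that $v_1'$ and $v_3'$ are normalized timelike vectors on the same sheet (i.e.\ points of $\H^2$) to conclude $(v_1',v_3')\le -2$, hence $pq-r\ge 2$ and cyclicity of $Q'$. If anything, you are more careful than the paper, which simply asserts $(v_1',v_3')=-2\cosh d(v_1',v_3')<-2$ for ``negative vectors,'' whereas you explicitly verify the sheet-preservation of $R_{v_2}$ via $(R_{v_2}(u),v_2)=(u,v_2)$ and correctly state the non-strict bound $\le -2$, which is what the statement's ``$\ge 2$'' actually requires.
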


\begin{proof}
Due to the symmetry, to prove the lemma  we only need to check one mutation (say, $\mu_2$).
As $\{v_1',v_2',v_3'\}=\{-v_1-(v_1,v_2)v_2,v_2,v_3\}$, we have
\begin{eqnarray*}
(v_1',v_3')&=&-(v_1,v_3)-(v_1,v_2)(v_2,v_3)=-(b_{13})-(-b_{12})(-b_{23})=-(b_{12}b_{23}-b_{31})=-b_{13}',\\
(v_1',v_2')&=&(v_1,v_2)=b_{12}',\\
(v_2',v_3')&=&(v_2,v_3)=b_{23}'.
\end{eqnarray*}
As $v_1'$ and $v_3'$ are negative vectors, $(v_1',v_3')=-2\cosh d(v_1',v_3')<-2<0$, which implies that $b_{31}'=-b_{13}'<-2$, i.e. $Q'=\mu_2(Q)$ is a cyclic quiver with  $|b'_{12}|,|b'_{23}|,|b'_{31}|\ge 2$ for $B'=\mu_2(B)$. 
Also, the computation above shows that  conditions (1)--(2) are satisfied by $\bf v'$ and $B'$.

\end{proof}

From now on, given a cyclic quiver we denote its weights by $p=|b_{12}|$, $q=|b_{23}|$, $r=|b_{31}|$.
We will also denote the corresponding matrix $B$ by a triple $(p,q,r)$.

\begin{lemma}
\label{r<2}
Let $Q$ be a cyclic quiver with weights $p,q,r>0$.
\begin{itemize}
\item[(a)] if $r<2$ then $Q$ is mutation-acyclic;
\item[(b)] if $r=2$ and $p\ne q$ then $Q$ is mutation-acyclic;
\item[(c)] if $r=2$  and $p=q\ge 2$ then $Q$ is mutation-cyclic, moreover $Q$ is minimal in the mutation class. 
\end{itemize}

\end{lemma}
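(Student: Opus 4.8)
The computation underlying everything is the effect of a mutation on the weights of a cyclic triple: if the three edges carry weights $w,w',w''$, then mutating at the vertex opposite the edge carrying $w$ leaves $w',w''$ unchanged and replaces $w$ by $|w'w''-w|$, the result being again cyclic exactly when $w<w'w''$ (otherwise the new weight is $w-w'w''\ge 0$ and the quiver is acyclic). I would record this first, together with its immediate consequence: if $(p,q,r)$ is a \emph{minimal} cyclic representative of a mutation-cyclic class, then each single mutation must stay cyclic and cannot decrease the weight sum, so $qr\ge 2p$, $pr\ge 2q$, $pq\ge 2r$; multiplying these in pairs gives $p,q,r\ge 2$.

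\textbf{(a)} Fix the weight $r<2$ and mutate only at the two vertices opposite the $p$- and $q$-edges, which keeps $r$ fixed and moves the pair $(p,q)$ by the two involutions $p\mapsto qr-p$ and $q\mapsto pr-q$. The Markov constant yields the conserved quantity $p^2-rpq+q^2=C(Q)-r^2$, whose quadratic form is \emph{positive definite} precisely because $r<2$; hence all reachable pairs $(p,q)$ lie on one fixed ellipse. If $Q$ were mutation-cyclic every reachable pair would satisfy $p<qr$ and $q<pr$, confining them to a compact arc bounded away from the axes. At a configuration of minimal weight sum on this arc neither involution decreases the sum, so $qr\ge 2p$ and $pr\ge 2q$, whence $r^2\ge 4$, contradicting $r<2$. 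Thus $Q$ is mutation-acyclic.

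\textbf{(b)} Now $r=2$; mutating again only opposite the $p$- and $q$-edges keeps $r=2$, and the conserved quantity degenerates to $(p-q)^2=C(Q)-4$, so $\delta:=|p-q|>0$ is constant. Writing the two non-$r$ weights as $s$ and $s+\delta$, the mutation opposite the larger edge sends $s+\delta\mapsto|2s-(s+\delta)|=|s-\delta|$: while $s>\delta$ it stays cyclic and the smaller weight drops to $s-\delta$, and as soon as $s\le\delta$ this same mutation produces a non-positive weight, i.e. an acyclic quiver. Since the smaller weight falls by the fixed positive amount $\delta$ at each step, this terminates after finitely many mutations, so $Q$ is mutation-acyclic.

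\textbf{(c)} Here $Q=(p,p,2)$ with $p\ge 2$, so $C(Q)=4$. The three mutations carry $(p,p,2)$ to $(p,p,p^2-2)$, $(p,p,2)$, $(p,p,2)$, all cyclic with weights $\ge 2$; since Lemma~\ref{rot} propagates ``cyclic with weights $\ge2$'' along every further mutation, $Q$ is mutation-cyclic. For minimality I would put $a=2\cosh\alpha$ for each weight (legitimate since all weights are $\ge2$); then $C=4$ reads $\cosh^2\alpha+\cosh^2\beta+\cosh^2\gamma-2\cosh\alpha\cosh\beta\cosh\gamma=1$, equivalent to one angle being the sum of the other two, and the mutation $c\mapsto ab-c$ becomes $\gamma\mapsto|\alpha-\beta|$. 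With $t=\arccosh(p/2)$ the quiver $(p,p,2)$ corresponds to $(\alpha,\beta,\gamma)=(t,t,0)$, and an induction shows every triple in the class has angles that are integer multiples of $t$ forming a nonnegative ``sum-triple'' $(m,n,k)$ reachable from $(1,1,0)$. As $\cosh$ is increasing, minimizing the weight sum means minimizing $\cosh(mt)+\cosh(nt)+\cosh(kt)$ over these integer triples, which is attained at $(1,1,0)$; hence $(p,p,2)$ has minimal weight sum $4\cosh t+2=2p+2$.

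The main obstacle is the termination, equivalently the attainment of the relevant minimum. In (b) this is automatic because the smaller weight decreases by a fixed step; but in (a) and (c) a strictly decreasing sequence of \emph{real} weights need not terminate a priori, and the whole point of passing to the compact ellipse in (a) and to integer multiples of $t$ in (c) is to replace the real descent by one in which a minimizer genuinely exists. I expect the most delicate bookkeeping to lie in (a), where one must ensure a minimal-sum configuration really is attained on the arc (handling separately the case of a finite orbit and that of an infinite orbit accumulating at the boundary of the cyclic region) and that both involutions are applicable there.
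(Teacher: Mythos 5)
Your (b) is essentially the paper's proof in different clothing, and your (a) is a genuinely different route. The paper handles (a) and (b) by one explicit descent: alternating the two mutations that fix $r$ produces weights $f_n=u_n(r)q-u_{n-1}(r)p$, where $u_n$ are Chebyshev-type polynomials; for $r=2\cos\theta<2$ one has $u_n(r)=\sin((n+1)\theta)/\sin\theta$, so some $f_n$ becomes negative and an acyclic quiver appears after an explicitly bounded number of steps, while for $r=2$ one gets $f_n=(n+1)q-np=q-n(p-q)$, which is exactly your $\delta$-descent. Your ellipse argument for (a) is correct in outline but, as you yourself flag, has a hole as written: the orbit of $(p,q)$ under the two involutions may be infinite, so ``a configuration of minimal weight sum on this arc'' need not exist. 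The fix is to minimize over the \emph{closure} of the orbit: it is compact, invariant under both (continuous) involutions, and lies in the closed first quadrant; at a minimizer $(p_0,q_0)$ invariance gives $q_0r\ge 2p_0$ and $p_0r\ge 2q_0$, and $p_0,q_0>0$ (if $p_0=0$, invariance puts $(0,-q_0)$ in the closure, forcing $q_0=0$, but $(0,0)$ does not lie on the ellipse, whose level $C(Q)-r^2$ is positive), whence $r^2\ge4$, the desired contradiction. So (a) is completable at the price of this compactness bookkeeping, though it is non-constructive where the paper's descent bounds the number of mutations. (Also, your preliminary observation that a minimal representative of a mutation-cyclic class has weights $\ge 2$ tacitly assumes such a minimum exists, which can fail over the reals, cf.\ Remark~\ref{rational-rem}; but you never actually use it.)

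The one outright error is the first sentence of your (c): Lemma~\ref{rot} does \emph{not} say that ``cyclic with all weights $\ge2$'' propagates under mutation. Its hypotheses include a realization by $\pi$-rotations, i.e.\ vectors with $(v_i,v_i)=-2$ and $(v_i,v_j)=-|b_{ij}|$, and without that the claimed propagation is false: $(2,2,5)$ is cyclic with all weights $\ge2$, yet the mutation opposite the weight-$5$ edge gives $|2\cdot2-5|=1$ with acyclic orientation. So before citing Lemma~\ref{rot} you must construct a realization of $(p,p,2)$; the paper does this degenerately, taking $v_1=v_3$ (coincident points realize the weight $2$) and $v_2$ at distance $\arccosh(p/2)$, and only then applies Lemma~\ref{rot} repeatedly. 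Your $\cosh$-parametrization in the minimality argument is precisely this degenerate realization in coordinates (three collinear points, with mutation acting as $\gamma\mapsto\alpha+\beta$ or $|\alpha-\beta|$), and your sum-triple induction does prove that every reachable quiver is cyclic --- but only if cyclicity is carried inside the induction, whereas as written the induction presupposes that all weights in the class are $\ge2$, which is the very thing the faulty citation was meant to supply. Reorganize (c) so that a single induction proves simultaneously that every quiver reachable from $(p,p,2)$ is cyclic with weights $2\cosh(mt)$, $m\in\Z_{\ge0}$, one index equal to the sum of the other two; mutation-cyclicity and minimality (the weight sum is minimized at the triple $(1,1,0)$) then both follow, and your argument becomes the paper's collinear-points proof in coordinates.
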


\begin{proof}
(a) We will apply mutations $\mu_1$ and $\mu_3$ alternately (starting from $\mu_3$), so that at every step $b_{13}=r$ stays intact.
Furthermore, each of the steps changes either $b_{12}$ or $b_{23}$ in the following way: 

\medskip

\noindent
{\bf Claim $1$.} {\it 
For $n\in\N$ denote $Q_n'=(\mu_1\mu_3)^{n/2}Q$ if $n$ is even or $Q_n'=\mu_3(\mu_1\mu_3)^{(n-1)/2}Q$ if $n$ is odd. If all $Q_k'$ are cyclic for $k<n$, then the entries of the corresponding matrix $B_n'$ satisfy
$$
|b'_{12}|\  (\text{or } |b'_{23}|)= f_n(p,q,r)=u_{n}(r)q-u_{n-1}(r)p,  
$$
where $u_n(x)$ is a Chebyshev polynomial of the second kind (of a half-argument) recursively defined by 
$$ u_0(x)=1, \qquad u_1(x)=x \qquad u_{n+1}(x)=xu_n(x)-u_{n-1}(x).$$ }

\medskip
\noindent
Proof of Claim~$1$ is a straightforward induction:  
the base is   $\mu_3(p,q,r)=(rq-p,q,r)$; the step is given by $\mu=\mu_1$ or $\mu_3$ with
$$\mu(f_n,f_{n+1},r)=(f_{n+2},f_{n+1},r). $$
The claim can also be easily extracted from~\cite[Lemma 3.2]{LS}.



\medskip

\noindent
{\bf Claim $2$.} {\it For any real $p,q,r> 0$ s.t. $r<2$ there exists an integer $n>0$ such that  $u_{n+1}(r)q-u_n(r)p<0$.}

\medskip

To prove the claim, we will use Chebyshev polynomials of the second kind defined by 
$$ U_0(y)=1, \qquad U_1(y)=2y, \qquad U_{n+1}(y)=2yU_n(y)-U_{n-1}(y).$$
Notice that if $x=2y$ then $u_n(x)=U_n(y)$. For $0< r <2$ we can write $r=2\cos \theta$ for some $0< \theta<\pi/2$. Then we have
$$
u_n(r)=U_n(\cos \theta)=\frac{\sin((n+1)\theta)}{\sin \theta},
$$
where the last equality is a well-known property of Chebyshev polynomials of the second kind. If  $u_{n+1}(r)q-u_n(r)p\ge 0$, then 
$$\frac{\sin((n+1)\theta)}{\sin \theta}q\ge \frac{\sin(n\theta)}{\sin \theta}p,
$$
or just $\sin((n+1)\theta)q \ge \sin(n\theta)p$, as $\sin \theta>0$. Since  $0< \theta<\pi/2$, there exists an integer $n>0$ such that 
$\sin(k\theta)>0$ for all $0<k\le n$ but $\sin((n+1)\theta)<0$. This gives the number $n$ required in Claim 2.

\smallskip
Combining the two claims we see that there exists $n\in \N$ such that $Q_n'$ is acyclic, which completes the proof of part (a).

\medskip
\noindent
(b) If $r=2$ then $u_n(r)=n+1$, so, the condition  $u_{n+1}(r)q-u_n(r)p>0$ turns into $(n+1)q-np>0$.
Assuming $q<p$, this cannot hold if $n$ is large enough.

\medskip
\noindent
(c) If $p=q>2$ and $r=2$ then there exist points $v_1,v_2,v_3$ in $\H^2$ realizing $B=(q,q,r)$.
Indeed, we take $v_1=v_3$, and choose any $v_2$ such that $2\cosh d(v_1,v_2)=q$ (as usual, we assume $(v_i,v_i)=-2$).
Applying repeatedly Lemma~\ref{rot} we see that in this case $Q$ is mutation-cyclic. 
Moreover, the mutated triple of points  always remains collinear,
and it is easy to see that every new mutation either increases the distances in the triple or brings it to the previous configuration.
This implies that the initial quiver $Q$ was minimal.  

\end{proof}

Similarly to geometric realizations by reflections (see Definition~\ref{def-realization-refl}) we define 
 geometric realizations by $\pi$-rotations: 

\begin{definition}
\label{def-realization-rot}
Let $B$ be a $3\times 3$ skew-symmetric matrix.
We say that a triple of vectors ${\bf v}=(v_1,v_2,v_3)$ is a {\it geometric realization by $\pi$-rotations} of $B$ if conditions (1)--(2)
of Lemma~\ref{rot} are satisfied. We also say that $\bf v$ provides a {\it realization} of the mutation class of $B$ if the mutations of 
$\bf v$ via partial $\pi$-rotations agree with all the mutations of $B$, i.e. if conditions (1)--(2) are satisfied in every seed.

\end{definition}

We can now formulate the following immediate corollary of Lemma~\ref{rot}.

\begin{lemma}
\label{rot-no-ac}
A mutation-acyclic quiver has no realization by $\pi$-rotations.
\end{lemma}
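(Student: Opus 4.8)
The plan is to argue by contradiction. Suppose a mutation-acyclic quiver $Q$ admitted a realization by $\pi$-rotations of its whole mutation class, so that conditions (1)--(2) of Lemma~\ref{rot} hold in every seed. The first thing I would record is how rigid these conditions are: each $v_i$ is a negative vector, so $(v_i,v_j)=-2\cosh d(v_i,v_j)\le -2$, and condition~(2) then forces every weight $|b_{ij}|$ to be $\ge 2$ (in particular nonzero) in \emph{every} seed. Thus a realization by $\pi$-rotations can only exist for a quiver all of whose mutation-class representatives are connected with weights at least $2$.

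The conceptual core is that the $\pi$-rotation model preserves cyclicity, which is exactly the content of Lemma~\ref{rot}: a cyclic seed with weights $\ge 2$ satisfying (1)--(2) mutates to a cyclic seed with weights $\ge 2$ again satisfying (1)--(2). Inducting on the length of a mutation sequence, if the realized class contained even one cyclic seed then every seed reachable from it would be cyclic, i.e. the class would be mutation-cyclic, contradicting the hypothesis. It therefore suffices to produce the contradiction at an acyclic representative $Q_0$ of the class, which exists because $Q$ is mutation-acyclic and whose vectors still satisfy (1)--(2) since the realization holds in every seed.

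To finish I would examine $Q_0$ directly. A connected acyclic rank $3$ quiver with positive weights is a linear order $i\to j\to \ell$ together with an arrow $i\to\ell$, so it has a unique \emph{middle} vertex $j$ carrying one incoming and one outgoing arrow; denote the three weights $p=|b_{ij}|$, $q=|b_{j\ell}|$, $r=|b_{i\ell}|$. Mutating at $j$ by partial $\pi$-rotation reflects exactly one of $v_i,v_\ell$ in $v_j$, giving $(v_i',v_\ell')=-(v_i,v_\ell)-(v_i,v_j)(v_j,v_\ell)=r-pq$, whose modulus is $|r-pq|$; but the matrix mutation at the middle vertex of a path configuration \emph{adds}, producing the weight $pq+r$. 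Since $pq+r\neq|r-pq|$ for positive weights, condition~(2) fails at $\mu_j(Q_0)$, contradicting that the realization holds in every seed.

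The step I expect to be the real obstacle is this last one: the definition of a realization only records the Gram-matrix identities (1)--(2), which a single acyclic seed can perfectly well satisfy, so the contradiction cannot be seen at $Q_0$ itself and must be extracted by propagating the configuration through one mutation. The essential point to make precise is that partial $\pi$-rotation \emph{subtracts} exactly where the acyclic (path) mutation rule of Figure~\ref{quivermut} \emph{adds}; equivalently, one must observe that (1)--(2) can only be propagated across cyclic configurations, which is why the model is confined to mutation-cyclic classes and why the statement is an immediate corollary of Lemma~\ref{rot}.
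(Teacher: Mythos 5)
Your proof is correct and takes essentially the same route as the paper, which states the lemma as an immediate corollary of Lemma~\ref{rot}: the core of both arguments is that partial $\pi$-rotations force the subtractive (cyclic) mutation rule together with weights $\ge 2$, so conditions (1)--(2) can only propagate through cyclic seeds, making a realization incompatible with a mutation-acyclic class. Your closing computation at the acyclic representative (the model produces $|pq-r|$ where the quiver mutation produces $pq+r$) just makes explicit why the paper regards the corollary as immediate.
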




\begin{theorem}
\label{existence}
Let $Q$ be a mutation-cyclic rank $3$ quiver, and let $B$ be the corresponding skew-symmetric matrix. Then the mutation class of $B$ has either a realization by reflections or a realization by $\pi$-rotations.

\end{theorem}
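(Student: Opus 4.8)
The plan is to realize a single cyclic seed geometrically and then let the propagation lemmas carry the realization across the whole mutation class: Lemma~\ref{refl} for reflections and Lemma~\ref{rot} for $\pi$-rotations. So the entire question reduces to producing one good configuration, and the dichotomy in the statement will be dictated by the signature of a single symmetric matrix.

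First I would reduce to weights $\ge 2$. Since $Q$ is mutation-cyclic, no representative can be made acyclic, so by Lemma~\ref{r<2}(a),(b) a weight strictly below $2$, or a weight equal to $2$ with the other two distinct, is impossible; hence our chosen cyclic seed has $p,q,r\ge 2$. Fixing the cyclic sign convention $b_{12},b_{23},b_{31}>0$, I form the symmetric matrix
$$
G=\begin{pmatrix} 2 & p & r\\ p & 2 & q\\ r & q & 2\end{pmatrix},
$$
whose three positive off-diagonal products match the parity demanded of a cyclic quiver in condition~(3) of Lemma~\ref{refl}. The two candidate realizations are governed by $G$ and by $-G$: a triple of norm-$2$ vectors with Gram matrix $G$ is a realization by reflections, while a triple of norm-$(-2)$ vectors with Gram matrix $-G$ (diagonal $-2$, off-diagonals $-p,-q,-r$) is a realization by $\pi$-rotations as in Lemma~\ref{rot}. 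A direct computation gives $\det G=2(4-C(Q))$, but the proof needs only the signature. Since $\tr G=6>0$, at most two eigenvalues are negative; and since $p\ge 2$ the leading $2\times2$ minor $4-p^2\le 0$ shows $G$ is not positive definite, so at most two eigenvalues are positive. Writing $n_-$ for the number of negative eigenvalues of $G$, we conclude $n_-\in\{0,1,2\}$.

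The case split is then clean. If $n_-\le 1$, then $G$ (possibly degenerate) is the Gram matrix of a triple of norm-$2$ vectors in a space of signature $(n_+,n_-,n_0)$ with $n_-\le 1$ and $n_+\le 2$; these are normals to a line configuration in $\E^2$ or $\H^2$ according to the signature, so conditions~(1)--(3) of Lemma~\ref{refl} hold on the seed and, by Lemma~\ref{refl}, persist under every mutation, giving a realization of the whole class by reflections. If $n_-=2$, then $\tr G>0$ forces the third eigenvalue to be positive, so $G$ has signature exactly $(1,2)$ and $-G$ has signature $(2,1)$; thus $-G$ is the Gram matrix of a genuine triple of points in $\H^2$, conditions~(1)--(2) of Lemma~\ref{rot} hold on the seed, and because all weights are $\ge 2$, Lemma~\ref{rot} applies at every step and keeps the quiver cyclic with weights $\ge 2$, so the realization by $\pi$-rotations extends to the whole class. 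As $n_-$ takes no other value, one of the two realizations always exists.

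The point worth spelling out—and the reason this is an honest dichotomy rather than a single assertion—is why signature $(1,2)$ must be read as $\pi$-rotations and not as reflections. When $n_-=2$ the norm-$2$ vectors are no longer spacelike, and the formal reflection $r_i(u)=u-(u,v_i)v_i$ then coincides projectively with the $\pi$-rotation $R_{v_i}(u)=-u-(u,v_i)v_i$, since the two maps differ by $-\mathrm{id}$, which acts trivially on $P(V)$. Hence the same Gram data is a reflection realization exactly when $n_-\le 1$ and a $\pi$-rotation realization exactly when $n_-=2$, with both occurring in the degenerate boundary case $\det G=0$. I expect the main obstacle to be presenting this signature trichotomy cleanly (and handling the degenerate seeds). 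Notice that the argument never invokes the bound $C(Q)\le 4$: that inequality, proved later in Section~\ref{Markov-sec}, is precisely what excludes $n_-\le 1$ for mutation-cyclic quivers and so upgrades this dichotomy to the stronger statement that every mutation-cyclic class is realized by $\pi$-rotations.
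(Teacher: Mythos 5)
Your proof is correct, and it reaches the theorem by a genuinely different route than the paper. Both arguments share the same skeleton: reduce to a cyclic seed with $p,q,r\ge 2$ via Lemma~\ref{r<2}, produce one geometric seed realization, and let Lemma~\ref{refl} or Lemma~\ref{rot} propagate it across the mutation class. But where you diverge is the production of the seed: the paper works synthetically in $\H^2$, phrasing the dichotomy as the triangle inequality $d_r\le d_p+d_q$ for $d_x=\arccosh(x/2)$ (points exist, hence $\pi$-rotations) versus its failure, in which case three pairwise disjoint lines at prescribed distances are built by sliding a line along a common perpendicular and invoking continuity; you instead read everything off the signature of the single symmetric matrix $G$, using the standard fact that a symmetric matrix is a Gram matrix of vectors in any quadratic space whose signature dominates its own. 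The two dichotomies are equivalent (the triangle inequality holds exactly when $n_+(G)\le 1$, i.e.\ when $-G$ embeds in a $(2,1)$-space), but your version handles the boundary weights $p=2$ or $q=2$ or $r=2$ uniformly, with no need for the paper's side remarks invoking Lemma~\ref{r<2}(b),(c), and it anticipates the determinant computation $\det G=2(4-C(Q))$ that the paper only makes later (Corollary~\ref{cor unique}); the cost is that the degenerate case $C(Q)=4$ lands in your reflection branch rather than the $\pi$-rotation branch, which is fine for the either/or statement but means your split does not by itself recover Theorem~\ref{uniqueness}(2). Two small imprecisions, neither affecting the proof: $-\mathrm{id}$ is not trivial on the paper's $P(V)=V/\R_+$ (it is the antipodal map; it is trivial on $V/\R^*$ and on $\H^2$ itself), and your closing claim that $C(Q)\le 4$ ``excludes $n_-\le 1$'' fails at the boundary $C(Q)=4$, where $G$ is degenerate with $n_-\le 1$ yet the quiver can be mutation-cyclic.
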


\begin{proof}
Since $Q$ is mutation-cyclic, Lemma~\ref{r<2} implies that $B=(p,q,r)$ with $p,q,r\ge 2$.
If there is a triple of points on $\H^2$ on mutual distances $d_p, d_q, d_r\ge 0$, where 
$$d_x=\arccosh \frac x 2,$$
then Lemma~\ref{rot} guarantees the realization by $\pi$-rotations
(as $2\cosh d(u,v) = -(u,v)$).
Such a triple of points on $\H^2$ does exist if and only if the triangle inequality holds for $d_p,d_q,d_r$.
More precisely, assuming $p\le q\le r$ (and hence  $d_p\le d_q\le d_r$), a hyperbolic triangle with sides   $d_p,d_q,d_r$ exists 
if and only if $$d_r\le d_p+d_q.$$

Now, suppose that $d_r>d_p+d_q$ (i.e. we are unable to construct a realization by $\pi$-rotations).
Notice, that we can also assume $p,q,r\ne 2$, as in the case $r=2$, $p=q\ge 2$ there is a realization by $\pi$-rotations 
(see the proof of Lemma~\ref{r<2}(c)) and in the case $r=2$, $p\ne q$ the matrix $B$ is mutation-acyclic by    Lemma~\ref{r<2}(b).
We will show that given  $d_p,d_q,d_r>0$ with $d_r>d_p+d_q$  there are three lines in $\H^2$ on mutual distances  $d_p,d_q,d_r$. Then choosing the normal vectors to these lines 
will lead to a geometric realization of $B$ by reflections (with respect to these lines): indeed, conditions (1) and (2) of 
Lemma~\ref{refl} will hold by construction and condition (3) will be easy to check (for instance, if we choose the directions of normal vectors as in Fig.~\ref{three lines}(a)
then all three scalar products will be positive).

\begin{figure}[!h]
\begin{center}
\psfrag{lp}{\small \color{blue} $l_p$}
\psfrag{lq}{\small \color{blue} $l_q$}
\psfrag{lr}{\small \color{blue} $l_r$}
\psfrag{lp'}{\small \color{blue}  $l_{p}'$}
\psfrag{dp}{\scriptsize \color{dred} $d_p$}
\psfrag{dq}{\scriptsize  \color{dred}  $d_q$}
\psfrag{dr}{\scriptsize \color{dred}   $d_r$}
\psfrag{a}{\small (a)}
\psfrag{b}{\small (b)}
\epsfig{file=./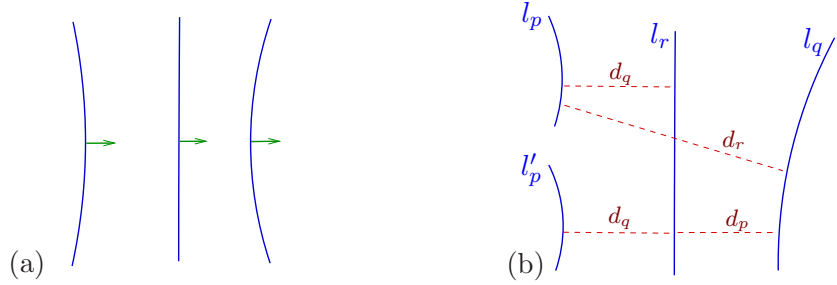,width=0.69\linewidth}
\caption{Three lines $l_p,l_q,l_r$ at distances $d_p,d_q,d_r>0$, where $d_r>d_p+d_q$.}
\label{three lines}
\end{center}
\end{figure}

It remains to show that for any  $d_p, d_q, d_r> 0$ with  $d_r>d_p+d_q$ there exists a triple of lines $l_p,l_q,l_r$ in $\H^2$ such that 
$d_p=d(l_q,l_r), d_q=d(l_p,l_r)$ and $d_r=d(l_q,l_p)$. To show this, we first choose two lines $l_r$ and $l_q$ on distance $d_p$ (see Fig~\ref{three lines}).
Also, we choose a line $l_p'$ so that  $d_q= d(l_r,l'_p)$, the lines $l_p',l_r,l_q$ have a common perpendicular and  $l_r$ separates $l_q$ from $l_p'$. When the line $l_p'$ slides along $l_r$ staying on the same distance (in other words, when we apply a hyperbolic translation along $l_r$), the distance between $l_p'$ and $l_q$ grows from $d_p+d_q$ to infinity. So, there is a position  $l_p$ of $l'_p$  for which  $d_r=d(l_q,l_p)$.

\end{proof}

\begin{remark}
It will be shown in the next section (Theorem~\ref{uniqueness}(2)) that cyclic quivers $(p,q,r)$ with  $d_r>d_p+d_q$ are in fact mutation-acyclic, so an existence of a realization by reflections is not a coincidence. However, for this we need some results from~\cite{BBH}.  

\end{remark}

\section{Geometry governed by the Markov constant}
\label{Markov-sec}

We have seen that every skew-symmetric rank $3$ real mutation class admits a geometric realization. In this section we study the geometric realizations obtained and show that their properties are controlled by the Markov constant  (see definition~\ref{Markov}).
We also show that all acyclic mutation classes are realized by reflections, all cyclic mutation classes are realized by $\pi$-rotations and 
both realizations may occur only for some degenerate cases (see Theorem~\ref{uniqueness}).

\begin{definition}
\label{Markov}
The {\it Markov constant} $C(p,q,r)$ for a triple $(p,q,r)$, where $p,q,r\in \R$ was introduced by Beineke, Br\"ustle, Hille in~\cite{BBH} as
$$ C(p,q,r)=p^2+q^2+r^2-pqr.
$$
\end{definition}

For a cyclic quiver $Q$ with weights $p,q,r$, $C(Q)$ is defined as $C(p,q,r)$ while for an acyclic quiver with weights  $p,q,r$ one has 
$C(Q):=C(p,q,-r)$ (this may be understood as turning an acyclic quiver into a cycle at the price of having a negative weight).
It is observed in~\cite{BBH} that $C(Q)$ is a mutation invariant, i.e. it is constant on the mutation class of $Q$.
It was also shown in~\cite{BBH} that in the case of integer weights $C(Q)$ characterizes (with some exceptions) the mutation-acyclic quivers:

\begin{prop}[\cite{BBH}, Theorem 1.2]
Let $Q$ be a cyclic quiver with integer weights given by $p,q,r\in \Z_{\ge 0}$. Then the following conditions are equivalent.
\begin{itemize}
\item[(1)] $Q$ is mutation-cyclic;
\item[(2)] $p,q,r\ge 2$ and $C(p,q,r)\le 4$;
\item[(3)] $C(p,q,r)<0$ or $Q$ is mutation-equivalent to one of the following classes:
\begin{itemize}
\item[(a)] $C(p,q,r)=0$, $(p,q,r)$ is mutation-equivalent to $(3,3,3)$;
\item[(b)] $C(p,q,r)=4$, $(p,q,r)$ is mutation-equivalent to $(q,q,2)$ for some $q>2$.
\end{itemize}
\end{itemize}

\end{prop}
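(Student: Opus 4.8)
The plan is to establish the cycle of implications $(2)\Rightarrow(1)\Rightarrow(3)\Rightarrow(2)$, driven by two mechanisms: the algebraic observation that a mutation of a cyclic quiver is a Vieta involution on the Markov equation, together with a descent on the weight-sum. Throughout I use that $C$ is a mutation invariant and Lemma~\ref{r<2}, which converts the statement ``some weight drops below $2$'' into ``the class is mutation-acyclic''. The central remark is that mutating a cyclic quiver $(p,q,r)$ at the vertex opposite the edge $r$ replaces $r$ by $r'=pq-r$, and since $r$ and $r'$ are exactly the two roots of $\rho^2-pq\rho+(p^2+q^2-C)=0$, the mutation is the Vieta jump $r\mapsto pq-r$; symmetric statements hold for $p$ and $q$, and the quiver remains cyclic precisely when the new weight is positive.

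For $(2)\Rightarrow(1)$ I would show that if $p,q,r\ge2$ and $C\le4$ then every mutation keeps all weights $\ge2$ and keeps the quiver cyclic, so by induction the whole class is cyclic. Consider the jump $r\mapsto r'=pq-r$. The inequality $C\le4$ means exactly that $r$ lies between the roots $r_\pm=\tfrac12\big(pq\pm\sqrt{(pq)^2-4(p^2+q^2-4)}\big)$ of $\rho^2-pq\rho+(p^2+q^2-4)$, and because these roots sum to $pq$, the reflected value $r'=pq-r$ lies in the same interval $[r_-,r_+]$. A direct check shows that, for $p,q\ge2$, the inequality $r_-\ge2$ is equivalent to $(p-q)^2\ge0$, hence always holds; therefore $r'\ge r_-\ge2>0$, so the mutated quiver is again cyclic with weights $\ge2$. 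The same computation applies to the other two jumps, and induction over the mutation class finishes this implication (it is, incidentally, valid for real weights too).

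For $(1)\Rightarrow(3)$ I would run the descent. If $Q$ is mutation-cyclic then by Lemma~\ref{r<2} no weight is $<2$, so all representatives have integer weights $\ge2$ and weight-sum a positive integer; hence a representative $(p,q,r)$ of minimal weight-sum exists, and I may take $p\le q\le r$. Minimality forces each jump to be non-decreasing, i.e. $qr\ge2p$, $pr\ge2q$, $pq\ge2r$; the binding inequality $pq\ge2r$ already yields $p\ge2$ and $r\le pq/2$. I then split on $p$. If $p=2$, then $pq=2q\ge2r\ge2q$ forces $r=q$, so the minimal quiver is $(2,q,q)$ with $C=4$, i.e. the class has the form $(q,q,2)$. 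If $p\ge3$, then on $q\le r\le pq/2$ the function $C(r)=r^2-pqr+p^2+q^2$ is decreasing, so $C\le C(q)=p^2-q^2(p-2)\le p^2(3-p)\le0$, with equality only at $(3,3,3)$. Reading off the cases gives precisely $(3)$: either $C<0$, or $C=0$ with minimal form $(3,3,3)$, or $C=4$ with minimal form $(q,q,2)$; in particular $C\le4$ always and $0<C<4$ never occurs.

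Finally $(3)\Rightarrow(2)$ is elementary: a cyclic quiver with a weight equal to $0$ has $C=p^2+q^2\ge0$, and one with a weight equal to $1$ has $C\ge p^2+q^2-pq+1\ge1$, so $C<0$ already forces $p,q,r\ge2$; the families $(3,3,3)$ and $(q,q,2)$ visibly satisfy $p,q,r\ge2$ and $C\le4$. This closes the loop, giving $(1)\Leftrightarrow(2)\Leftrightarrow(3)$. The main obstacle is the descent step: one must verify that a minimal representative genuinely exists and that the three non-decrease inequalities are legitimately available (which depends on the entire class being cyclic, supplied by Lemma~\ref{r<2}), and then squeeze the invariant $C$ at the minimum. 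As a consistency check, the geometry of Section~\ref{Markov-sec} reproduces this: writing the weights as $2\cosh d_\ast$, the Gram determinant of the three points equals $2(C-4)$, so $C\le4$ is exactly the condition that a (possibly degenerate) hyperbolic triangle exists, and with it a realization by $\pi$-rotations via Lemma~\ref{rot} — recovering $(2)\Leftrightarrow(1)$ geometrically.
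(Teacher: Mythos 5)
The paper itself contains no proof of this proposition: it is quoted directly from \cite{BBH} (Theorem 1.2), and the paper's own machinery (Lemma~\ref{rot}, Theorem~\ref{existence}, Theorem~\ref{uniqueness}) is built to extend it to real weights by geometric means. So your argument is necessarily a different route; in substance it is a correct, self-contained, elementary proof of the cited result. Both pillars check out. The mutation at the vertex opposite $r$ is indeed the Vieta involution $r\mapsto pq-r$ on the level set of $C$, and your computation for $(2)\Rightarrow(1)$ is right: $C\le 4$ says $r$ lies between the roots $r_\pm$ of $\rho^2-pq\rho+(p^2+q^2-4)$, and $r_-\ge 2$ reduces, after squaring the inequality $pq-4\ge\sqrt{(pq)^2-4(p^2+q^2-4)}$ (legitimate since $pq\ge4$), to $4(p-q)^2\ge0$; hence the mutated quiver is again cyclic with weights $\ge2$ and induction closes the class. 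The descent for $(1)\Rightarrow(3)$ is also sound: Lemma~\ref{r<2} keeps all weights $\ge2$, integrality gives a minimal representative, the three non-decrease inequalities hold because the class is cyclic, and the squeeze $C\le C(q)=p^2-q^2(p-2)\le p^2(3-p)\le 0$ for $p\ge3$, with equality only at $(3,3,3)$, yields the trichotomy. Compared to the paper's geometric treatment of the real-weight analogue, your argument is purely arithmetic, needs integrality exactly once (existence of the minimal representative), and in exchange delivers the fine classification in (3) that the geometry does not directly produce.

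Two points need attention. First, an edge case that your own descent exposes: in the case $p=2$ you get a minimal representative $(2,q,q)$ with $q\ge2$, not $q>2$. When $q=2$ this is the Markov quiver $(2,2,2)$, whose mutation class consists of $(2,2,2)$ alone; it satisfies (1) and (2) but is not mutation-equivalent to $(q,q,2)$ for any $q>2$, so condition (3) \emph{as transcribed in the statement} (with strict inequality) fails for it. Your proof therefore establishes the statement with $q\ge2$ in (3)(b) --- which is the correct form --- but you assert ``the class has the form $(q,q,2)$'' without noticing that this does not match the strict inequality in the statement you were asked to prove; the discrepancy should have been flagged. Second, in $(3)\Rightarrow(2)$ the conclusion concerns $Q$ itself, not its distinguished representative: to see that $Q$'s own weights are $\ge2$ in cases (a) and (b), you must propagate ``weights $\ge2$'' from $(3,3,3)$ or $(q,q,2)$ across the whole mutation class, i.e.\ invoke the already-proved implication $(2)\Rightarrow(1)$, whose induction shows every member of the class has weights $\ge2$. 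Your cyclic order of implications makes this legitimate, but the appeal is implicit and should be stated.
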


\begin{remark}
Theorem~1.2 in~\cite{BBH} contains two more equivalent conditions, we are not reproducing them here.
\end{remark}

Our next aim is to give a geometric interpretation of $C(Q)$ (in particular, to explain why $C(Q)$ distinguishes mutation-acyclic quivers)
as well as to extend the result to the case of real numbers $p,q,r$.

The question of recognizing whether a quiver $Q$ is mutation-acyclic is non-trivial if $Q$ is not acyclic itself (i.e. $Q$ is a cycle $(p,q,r)$) and if $p,q,r\ge 2$ (otherwise we just use Lemma~\ref{r<2}(a)).
For quivers of this type, the proof of Theorem~\ref{existence} shows that $Q$ can be realized by $\pi$-rotations
(and is mutation-cyclic by Lemma~\ref{rot-no-ac}) or by reflections
depending on the triangle inequality for $d_r\le d_p+d_q$, where $p\le q\le r$ and $d_x=\arccosh \frac x 2$.
Denote  $$\Delta(Q)=d_p+d_q-d_r,$$
understanding $\Delta(Q)\ge 0$ as ``triangle inequality holds'' and  $\Delta(Q)< 0$ as ``it does not''.

\begin{lemma}
Let $Q=(p,q,r)$ be a rank $3$ cyclic quiver with $2\le p \le q \le r$.
Then 
\begin{itemize}
\item[-] if $\Delta(Q)>0$ then $C(Q)<4$;
\item[-] if $\Delta(Q)=0$ then $C(Q)=4$;
\item[-] if $\Delta(Q)<0$ then $C(Q)>4$.

\end{itemize}

\end{lemma}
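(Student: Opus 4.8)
The plan is to reduce the whole statement to a single algebraic factorization of $C(Q)-4$ after the substitution $p=2\cosh d_p$, $q=2\cosh d_q$, $r=2\cosh d_r$ (equivalently $d_x=\arccosh\frac x2\ge 0$), so that $2\le p\le q\le r$ becomes $0\le d_p\le d_q\le d_r$. Under this substitution the three cases of the lemma are exactly statements about whether $d_r$ is larger than, equal to, or smaller than $d_p+d_q$, that is, about the sign of $-\Delta(Q)$. So the goal reduces to determining the sign of $C(Q)-4$ from the sign of $\Delta(Q)$.

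The key step is to view $C(Q)-4=p^2+q^2+r^2-pqr-4$ as a quadratic polynomial in $r$, namely $r^2-(pq)r+(p^2+q^2-4)$. I would compute its discriminant and observe that
$$p^2q^2-4(p^2+q^2-4)=(p^2-4)(q^2-4)=16\sinh^2 d_p\,\sinh^2 d_q,$$
a perfect square. Hence the two roots are $2\cosh d_p\cosh d_q\pm 2\sinh d_p\sinh d_q=2\cosh(d_p\pm d_q)$, which yields the factorization
$$C(Q)-4=4\bigl(\cosh d_r-\cosh(d_p+d_q)\bigr)\bigl(\cosh d_r-\cosh(d_p-d_q)\bigr).$$
This identity is the heart of the argument, and once it is in place the conclusion is essentially immediate; everything else is reading off signs.

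Next I would use that $\cosh$ is strictly increasing on $[0,\infty)$. Since $d_r\ge d_q\ge d_q-d_p=|d_p-d_q|$, the second factor is always nonnegative, and the first factor has the sign of $d_r-(d_p+d_q)=-\Delta(Q)$. Thus the first factor is negative, zero, or positive exactly when $\Delta(Q)$ is positive, zero, or negative, and combining it with the nonnegative second factor gives $C(Q)<4$, $C(Q)=4$, or $C(Q)>4$ respectively.

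The only point demanding care, and the one I expect to be the main (if minor) obstacle, is the possible vanishing of the second factor, which could otherwise block a strict inequality. I would dispose of it by noting that $\cosh d_r=\cosh(d_p-d_q)$ forces $d_r=|d_p-d_q|$, which together with $d_r\ge d_q\ge d_p\ge 0$ is only possible when $d_p=0$ and $d_r=d_q$; but then $\Delta(Q)=d_q-d_r=0$. Consequently, in the cases $\Delta(Q)>0$ and $\Delta(Q)<0$ the second factor is strictly positive, so the strict inequalities $C(Q)<4$ and $C(Q)>4$ follow, while in the remaining case the first factor already makes $C(Q)-4=0$, completing all three cases.
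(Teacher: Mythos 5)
Your proof is correct. It rests on the same substitution $p=2\cosh d_p$, $q=2\cosh d_q$, $r=2\cosh d_r$ as the paper's proof, and in fact on the same underlying algebra: your discriminant computation is equivalent to the identity $4\bigl(C(Q)-4\bigr)=(2r-pq)^2-(p^2-4)(q^2-4)$, and your factorization $C(Q)-4=4\bigl(\cosh d_r-\cosh(d_p+d_q)\bigr)\bigl(\cosh d_r-\cosh(d_p-d_q)\bigr)$ is precisely its difference-of-squares form. But the logical organization is genuinely different. The paper computes $\cosh(d_p+d_q)=\frac{pq}{4}+\sqrt{(\frac{p^2}{4}-1)(\frac{q^2}{4}-1)}$, rewrites $\Delta(Q)<0$ as $\sqrt{(p^2-4)(q^2-4)}<2r-pq$, and squares to get $C(Q)>4$; since squaring is not reversible, the converse direction requires the auxiliary observation that $C(Q)>4$ together with $2\le p\le q\le r$ forces $2r-pq>0$ (left as ``a straightforward calculation''), and the remaining two cases are then inherited from the established equivalence. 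Your route instead states a polynomial identity and reads all three cases off its signs simultaneously: no squaring of inequalities, no separate converse argument, and no sign lemma for $2r-pq$. The one residual danger in your approach, vanishing of the second factor, you isolate correctly: $\cosh d_r=\cosh(d_p-d_q)$ under $0\le d_p\le d_q\le d_r$ forces $d_p=0$ and $d_r=d_q$, hence $\Delta(Q)=0$, so the second factor is strictly positive whenever $\Delta(Q)\neq 0$ and the strict inequalities survive. In short, both arguments cost about the same effort, but yours is more uniform and self-contained, while the paper's is a more pedestrian chain of equivalences that delegates one step to an unstated calculation.
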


\begin{proof}
$\Delta(Q)<0$ if and only if $\cosh(d_p+d_q)<\cosh(d_r)$. Here $\cosh(d_r)=r/2$ (as $d_x=\arccosh (x/2))$, which implies
$$
\cosh(d_p+d_q)=\frac{p}{2}\frac{q}{2}+\sinh(\arccosh \frac{p}{2})\sinh(\arccosh \frac{q}{2})
=\frac{pq}{4}+\sqrt{(\frac{p^2}{4}-1)(\frac{q^2}{4}-1)}.
$$
Hence, $\Delta(Q)<0$  is equivalent to $\sqrt{(p^2-4)(q^2-4)}<2r-pq$. Therefore, $\Delta(Q)<0$ implies $(p^2-4)(q^2-4)<(2r-pq)^2$, i.e. $4<p^2+q^2+r^2-pqr=C(Q)$. A straightforward calculation shows that $C(Q)>4$ and $2\le p \le q \le r$ imply $2r-pq>0$, so $C(Q)>4$ also implies $\Delta(Q)<0$. 

\end{proof}

\begin{theorem}
\label{uniqueness}
Let $Q$ be a rank $3$ quiver with real weights. Then
\begin{itemize}
\item[(1)] if $Q$ is mutation-acyclic then $C(Q)\ge 0$ and  $Q$ admits a realization by reflections; 
\item[(2)] if $Q$ is mutation-cyclic then $C(Q)\le 4$ and  $Q$ admits a realization by $\pi$-rotations; 
\item[(3)] $Q$ admits both realizations (by reflections and by $\pi$-rotations) if and only if
$Q$ is cyclic with $p,q,r\ge 2$ and $C(Q)=4$.

\end{itemize}
 
\end{theorem}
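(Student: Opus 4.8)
The plan is to reduce everything to the single delicate case of a cyclic quiver $(p,q,r)$ with $p,q,r\ge 2$, and to read off the three statements from the signature of the relevant Gram matrices together with the dichotomy already established in Theorem~\ref{existence}. First I would dispose of part~(1): if $Q$ is mutation-acyclic its class contains an acyclic seed, so a realization by reflections exists by Corollary~\ref{cor: ac by relf}; evaluating $C$ on that acyclic seed gives $C=p^2+q^2+r^2+pqr\ge 0$, whence $C(Q)\ge 0$ since $C$ is a mutation invariant (\cite{BBH}). Next, using Lemma~\ref{r<2}(a),(b) I would record that a mutation-cyclic $Q$ is necessarily cyclic with $p,q,r\ge 2$, so that in parts~(2) and~(3) only this case has to be analysed.

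For the main case I would compute the determinant of the Gram matrix attached to each model. For a cyclic configuration an odd number of the mixed products $(v_i,v_j)$ is positive, so their product equals $+pqr$; hence the reflection Gram matrix with $(v_i,v_i)=2$ has determinant $2(4-C)$, while the $\pi$-rotation Gram matrix with $(v_i,v_i)=-2$ has determinant $2(C-4)$. Thus a realization by $\pi$-rotations (three genuine points of $\H^2$) exists exactly when $C\le 4$, and a realization by reflections of a \emph{cyclic} quiver with weights $\ge 2$ exists exactly when $C\ge 4$ (the spherical possibility $C<4$ is excluded because weights $\ge 2$ force $|(v_i,v_j)|\ge 2$, impossible on $\S^2$ unless everything is degenerate, i.e. all weights equal $2$, which gives $C=4$). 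This is precisely the content of the preceding Lemma (relating $\Delta(Q)=d_p+d_q-d_r$ to $C(Q)$) and it matches the construction in the proof of Theorem~\ref{existence}, which produces the rotation model when $\Delta\ge 0$ and the reflection model when $\Delta\le 0$.

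The step I expect to be the main obstacle is showing that a cyclic quiver with $p,q,r\ge 2$ and $C>4$ is in fact mutation-acyclic; this is exactly the implication flagged in the remark after Theorem~\ref{existence}, and it is the only point where the geometric bookkeeping alone does not suffice. I would attack it by a descent. Fixing two weights, the relation $C(p,q,r)=C(p,q,pq-r)$ realises a mutation of a cyclic quiver as the Vieta involution $r\mapsto pq-r$, which preserves $C$; iterating the involution that lowers the largest weight must eventually drive some weight below $2$ (the balanced triples $p=q=r$ all satisfy $C\le 4$, so a $C>4$ configuration cannot be fixed by the descent), whereupon Lemma~\ref{r<2}(a) certifies that $Q$ is mutation-acyclic. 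Equivalently, in the reflection picture one reflects the ``far'' line toward the other two, strictly decreasing the largest distance until two lines meet and an acute-angled (acyclic) configuration appears by Remark~\ref{acute}. Making the descent terminate, and controlling the orientation so that $pq-r$ is the genuine mutated weight, is the technical crux; here I would lean on the quantitative estimates of~\cite{BBH}, exactly as anticipated in that remark.

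Finally I would assemble the three statements. For part~(2): a mutation-cyclic $Q$ is cyclic with $p,q,r\ge 2$ by Lemma~\ref{r<2}; if $C>4$ it would be mutation-acyclic by the crux, a contradiction, so $C\le 4$, and then three points of $\H^2$ exist and Lemma~\ref{rot} upgrades them to a realization of the whole class by $\pi$-rotations. For part~(3): if $Q$ admits a realization by $\pi$-rotations then it is mutation-cyclic (the contrapositive of Lemma~\ref{rot-no-ac}, together with the fact that the rotation model propagates through every seed by Lemma~\ref{rot}), hence cyclic with $p,q,r\ge 2$ and $C\le 4$ by part~(2); if it also admits a realization by reflections, the determinant computation forces $C\ge 4$, so $C=4$. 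Conversely, when $Q$ is cyclic with $p,q,r\ge 2$ and $C=4$ both Gram matrices are degenerate of the Euclidean boundary type, yielding the two coexisting realizations; this is consistent with, and contains, the family $(q,q,2)$ of Lemma~\ref{r<2}(c).
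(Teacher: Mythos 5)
Your reduction of the whole theorem to the single implication ``cyclic with $p,q,r\ge 2$ and $C(Q)>4$ implies mutation-acyclic'' is the right skeleton: part (1), the determinant computations, and the assembly of parts (2)--(3) from that implication are all correct. But precisely at that implication --- which you yourself flag as the crux --- there is a genuine gap. The Vieta descent $r\mapsto pq-r$ applied to the largest weight does strictly decrease the sum of weights (indeed $C>4$ and $2\le p\le q\le r$ force $2r-pq>0$, and the orientation bookkeeping you worry about is standard: mutating a cyclic quiver at any vertex keeps the two adjacent weights and performs the Vieta move on the opposite one). However, over the reals a strictly decreasing sequence need not terminate: it could converge while every triple stays cyclic with all weights $\ge 2$. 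Your stated reason for termination --- that balanced triples have $C\le 4$, so no $C>4$ configuration is fixed by the descent --- does not address this; absence of fixed points does not rule out an infinite convergent descent. And deferring to unspecified ``quantitative estimates of~\cite{BBH}'' is not a proof: the real-weight statement actually available in~\cite{BBH} is the existence of a minimal representative in a mutation-cyclic class with $C\ne 4$, which is a different tool.

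The good news is that your route can be completed with a one-line estimate: from the identity $(2r-pq)^2-(p^2-4)(q^2-4)=4(C-4)$ and $2r-pq>0$ one gets $2r-pq\ge 2\sqrt{C-4}$ at every step (since $C$ is a mutation invariant), so the decrement of $p+q+r$ is bounded below by the uniform constant $2\sqrt{C(Q)-4}$; as the sum stays $\ge 6$ while all weights are $\ge 2$, the descent must leave that region after at most $(p+q+r-6)/(2\sqrt{C(Q)-4})$ mutations, producing either an acyclic quiver or a weight below $2$, and Lemma~\ref{r<2}(a) finishes. With this inserted, your proof is correct and genuinely different from the paper's: the paper instead assumes $Q$ mutation-cyclic with $C>4$, invokes~\cite{BBH} for a minimal representative $Q_{\mathrm{min}}$, and derives a geometric contradiction from its realization by three pairwise-diverging lines in $\H^2$ (if one line separates the other two, a partial reflection shortens a distance, contradicting minimality; if none separates, the parity of positive inner products is even, contradicting cyclicity). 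Your quantitative descent avoids the minimal-element input altogether and even gives an explicit bound on the number of mutations needed to reach an acyclic seed, in the spirit of Theorem~\ref{acyclic in S}. One smaller point: when excluding a reflection realization of a mutation-cyclic quiver with $C<4$ you rule out only the spherical case, but $\det=2(4-C)>0$ also allows Gram signature $(1,2)$; you need to add that a realization by reflections must live in $\S^2$, $\E^2$ or $\H^2$ (Gram signature $(3,0)$, $(2,0,1)$ or $(2,1)$), so signature $(1,2)$ is not admissible --- otherwise the ``only if'' direction of part (3) is not closed.
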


\begin{proof}
(1) If $Q$ is mutation-acyclic, consider the acyclic representative (we may assume it is $Q$ itself). Then $C(Q)\ge 0$ as it is a sum of four non-negative terms. Existence of a realization by reflections is guaranteed by Corollary~\ref{cor: ac by relf}.

\medskip
\noindent
(2) If $Q=(p,q,r)$ is mutation-cyclic, then by  Lemma~\ref{r<2}(a) we have $p,q,r\ge 2$ and by  Theorem~\ref{existence}, $Q$ has a realization either by reflections in $\H^2$ or by $\pi$-rotations (again, in $\H^2$). Which of the options holds depends on the triangle inequality,
or, in other words, on the sign of $\Delta(Q)$, which in its turn is determined by the sign of $C(Q)-4$.  
More precisely, if $C(Q)\le 4$ then the triangle inequality holds and $Q$ has a realization by $\pi$-rotations, and if $C(Q)>4$ then $Q$ has a realization by reflections.

Suppose that a mutation-cyclic quiver $Q$ has $C(Q)>4$ and, hence, has a realization by reflections.  It is shown in Section~5 of~\cite{BBH} that every mutation-cyclic class with $C(Q)\ne 4$ contains a minimal element $Q_{\mathrm{min}}$, where the sum of the weights $p+q+r$ is minimal over the whole mutation class (notice that \cite{BBH} shows this not only for integers but also for all mutation classes with real weights). Consider the realization of $Q_{\mathrm{min}}=(p_{\mathrm{min}},q_{\mathrm{min}},r_{\mathrm{min}})$. As $Q_{\mathrm{min}}$ is still mutation-cyclic, we have  
$p_{\mathrm{min}},q_{\mathrm{min}},r_{\mathrm{min}}\ge 2$ which implies that the lines $l_p,l_q,l_r$ in the realization of $Q_{\mathrm{min}}$ do not intersect each other. 
If one of the lines (say, $l_r$) separates the others (see Fig.~\ref{unique}(a)), then partial reflection in $l_r$ (reflection of exactly one of   $l_p$ and $l_q$) decreases one of the three distances, which contradicts the assumption that $Q_{\mathrm{min}}$ is minimal in the mutation class.   
If none of these lines separates the other two  (see Fig.~\ref{unique}(b)), then for any choice of normal vectors to these lines there will be even number of positive scalar products  $(v_i,v_j)$ (in particular, if we take all normals to be outward with respect to the triangular domain, then all three scalar products are negative). This does not agree with Definition~\ref{def-realization-refl} for a cyclic quiver.

In view of Theorem~\ref{existence}, the contradiction shows that every mutation-cyclic quiver $Q$ has $C(Q)\le 4$, admits a realization by $\pi$-rotations, and does not admit a realization by reflections if $C(Q)\ne 4$. 

\begin{figure}[!h]
\begin{center}
\psfrag{lp}{\scriptsize \color{blue}$l_p$}
\psfrag{lq}{\scriptsize  \color{blue}$l_q$}
\psfrag{lr}{\scriptsize  \color{blue}$l_r$}
\psfrag{dp}{\scriptsize  \color{dred} $d_p$}
\psfrag{dq}{\scriptsize \color{dred}  $d_q$}
\psfrag{dr}{\scriptsize \color{dred}  $d_r$}
\psfrag{a}{\small (a)}
\psfrag{b}{\small (b)}
\epsfig{file=./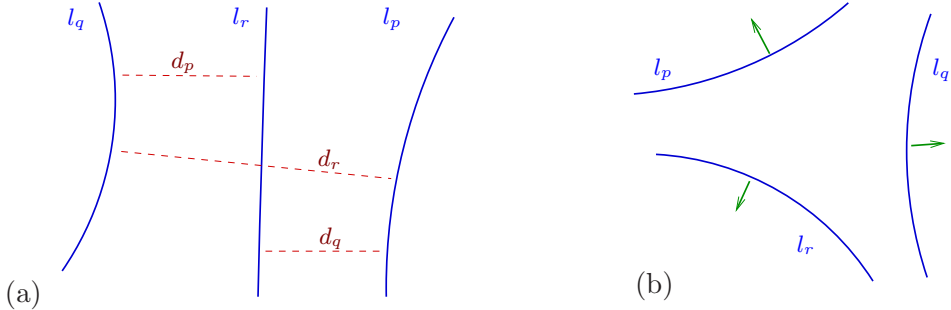,width=0.79\linewidth}
\caption{No realization by reflections for mutation-cyclic quivers.}
\label{unique}
\end{center}
\end{figure}

\medskip
\noindent
(3) First, by Lemma~\ref{rot-no-ac} a mutation-acyclic quiver cannot be realized by $\pi$-rotations.
Next, a mutation-cyclic quiver with $C(Q)\ne 4$ cannot be realized by reflections as shown in the proof of part (2).
Finally, suppose that $Q$ is mutation-cyclic and $C(Q)=4$. Then there is a realization of $Q$ by $\pi$-rotations about $3$ collinear points (as  $C(Q)=4$ is equivalent to the equality in the triangle inequality). Now, consider the line $l$  containing these three points. Taking three lines through these three points orthogonal to $l$ gives a realization by reflections. 

\end{proof}

\begin{remark}[On realizations of $(2,2,2)$]
In case of the quiver $(2,2,2)$ both realizations considered above turn out to be very degenerate (i.e. these are either reflections with respect to three coinciding lines or $\pi$-rotations with respect to three coinciding points).
However, one can also consider a realization by reflections with respect to three mutually parallel lines (in $\E^2$ or $\H^2$), this will lead to an infinite group $G$. 

\end{remark}

\begin{remark}
\label{rational-rem}
As it is mentioned in Section~5 of~\cite{BBH}, if $Q$ is  mutation-cyclic  with $C(Q)=4$ then the mutation class of $Q$ may have no minimal quiver. Having in mind any of the two realizations of $Q$ described above, it is clear that a mutation-cyclic $Q=(p,q,r)$ has a minimal representative in the mutation class if and only if $d_p/d_q\in \Q$. If $d_p/d_q\notin \Q$ then we can always make the distances between three collinear points (or between three lines) as small as we want, which means that the quiver tends to the Markov quiver $(2,2,2)$. 

\end{remark}

Theorem~\ref{uniqueness} describes the behavior of the realization and the value of $C(Q)$ based on the properties of the mutation class of $Q$ (mutation-cyclic versus mutation-acyclic). The following corollary discusses the properties of the realization based on the characteristics of an individual quiver $Q$.

\begin{cor} 
\label{cor unique}
Let $Q$ be a rank $3$ quiver.
\begin{itemize}
\item[(1)] If $Q$ is acyclic then $C(Q)\ge 0$ and there is a realization by reflections
\begin{itemize}
\item[-] in $\H^2$ if $C(Q)>4$;
\item[-] in $\E^2$ if $C(Q)=4$;
\item[-] in $\S^2$ if $C(Q)<4$.
\end{itemize}

\item[(2)] If $Q=(p,q,r)$ is cyclic with $\min(p,q,r)<2$  or with $r=2$, $p\ne q$, then 
$Q$ is mutation-acyclic,
$C(Q)\ge 0$ and there is a realization by reflections
\begin{itemize}
\item[-] in $\H^2$ if $C(Q)>4$;
\item[-] in $\E^2$ if $C(Q)=4$;
\item[-] in $\S^2$ if $C(Q)<4$.
\end{itemize}

\item[(3)] If $Q=(p,q,r)$ is cyclic with $\min(p,q,r)\ge 2$  then 
\begin{itemize}
\item[-] if $C(Q)>4$ then $Q$ is mutation-acyclic and there is a realization by reflections in $\H^2$; 
\item[-] if $C(Q)<4$ then $Q$ is mutation-cyclic and there is a realization by $\pi$-rotations;
\item[-] if $C(Q)=4$ then $Q$ is mutation-cyclic and there a both realizations. 
\end{itemize}

\end{itemize}

\end{cor}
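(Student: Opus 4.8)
The plan is to derive Corollary~\ref{cor unique} almost entirely from the machinery already assembled, namely Lemma~\ref{r<2}, Theorem~\ref{uniqueness}, and the Lemma relating $\Delta(Q)$ to the sign of $C(Q)-4$, together with the basic dictionary (stated in the construction of Section~\ref{s-refl}) between the sign of $C(Q)-4$ and the signature of the quadratic form $M(B)$. The corollary is a ``repackaging'' statement: Theorem~\ref{uniqueness} classifies realizations by the \emph{global} property mutation-acyclic versus mutation-cyclic, whereas here I want to read off the realization from \emph{local} data of the individual quiver $Q$ (whether it is acyclic, or cyclic with a small weight, or cyclic with all weights $\ge 2$). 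So the work is to match each case of the corollary to the appropriate part of Theorem~\ref{uniqueness} and then pin down the geometry ($\S^2$, $\E^2$, or $\H^2$) using the value of $C(Q)$.

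For part (1), if $Q$ is acyclic then Theorem~\ref{uniqueness}(1) gives $C(Q)\ge 0$ and a realization by reflections via the triple $(v_1,v_2,v_3)$ with Gram matrix $M(B)$. The remaining task is to identify the ambient space, and here I would invoke the trichotomy set up in the construction: the realization lives on $\S^2$, $\E^2$, or $\H^2$ according to whether $M(B)$ is positive definite, positive semidefinite degenerate, or of signature $(2,1)$. Thus I need the elementary linear-algebra fact that for an acyclic quiver the signature of $M(B)$ is governed by $C(Q)$, with the threshold at $C(Q)=4$: concretely, $\det M(B)$ (equivalently $4-C(Q)$ up to a positive factor, since for the acyclic sign convention $\det M(B)=8-2(p^2+q^2+r^2)-2pqr$ type expressions reduce to a multiple of $4-C(Q)$) is positive, zero, or negative precisely when $C(Q)<4$, $=4$, or $>4$. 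Checking this determinant computation and confirming that the leading principal minors stay positive (so that the form cannot be negative definite) is the one genuinely computational ingredient; it is routine but must be done carefully to fix the correspondence signature $\leftrightarrow$ space $\leftrightarrow$ sign of $C(Q)-4$.

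For part (2), the hypothesis $\min(p,q,r)<2$, or $r=2$ with $p\ne q$, lands exactly in the scope of Lemma~\ref{r<2}(a),(b), which declares $Q$ mutation-acyclic. Once that is known, $Q$ falls under part (1) applied to its acyclic representative, giving $C(Q)\ge 0$ and a realization by reflections; and since $C(Q)$ is a mutation invariant, the same $C(Q)$ threshold determines the space $\S^2/\E^2/\H^2$ as in part (1). Here I only need to observe that the signature computation of part (1) transfers through mutation, which is immediate because Lemma~\ref{refl} guarantees the realization persists in every seed and reflections are isometries of a fixed quadratic space, so the signature is constant along the mutation class.

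Part (3) is the substantive case and is where I expect the main (though modest) obstacle to sit. When $\min(p,q,r)\ge 2$ I split on the sign of $C(Q)-4$ and feed it into the $\Delta(Q)$ Lemma: $C(Q)>4$ forces $\Delta(Q)<0$, so the triangle inequality fails, no triple of points exists, and by the construction in the proof of Theorem~\ref{existence} one builds three pairwise non-intersecting lines, yielding a realization by reflections in $\H^2$; moreover this configuration is acute-angled (Remark~\ref{acute}), so $Q$ is mutation-acyclic, consistent with Theorem~\ref{uniqueness}. When $C(Q)<4$ we get $\Delta(Q)>0$, a genuine hyperbolic triangle exists, Lemma~\ref{rot} provides the realization by $\pi$-rotations, and Lemma~\ref{rot-no-ac} forces $Q$ to be mutation-cyclic. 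The boundary case $C(Q)=4$ gives $\Delta(Q)=0$, i.e.\ three collinear points, which by the final paragraph of the proof of Theorem~\ref{uniqueness}(3) admits \emph{both} realizations. The only delicate point is the logical direction: the $\Delta(Q)$ Lemma is phrased for an individual cyclic quiver with $2\le p\le q\le r$, so I must first reduce to that ordering (which is harmless, by relabeling) and then ensure I am invoking Theorem~\ref{uniqueness} in the correct direction—using $C(Q)$ to decide mutation-(a)cyclicity rather than the reverse. With these matchings in place the corollary follows without further computation.
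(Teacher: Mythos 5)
Your proof follows the same skeleton as the paper's: everything is reduced to Theorem~\ref{uniqueness}, Lemma~\ref{r<2}, and the determinant computation $\det M(B)=8-2(p^2+q^2+r^2)-2pqr=-2(C(Q)-4)$ (in the acyclic sign convention), which is exactly the paper's one-line identification of the ambient space. Parts (1) and (2) are fine (you even cite Lemma~\ref{r<2}(b) for the case $r=2$, $p\ne q$, which the paper's proof leaves implicit), and in part (3) your derivation of mutation-cyclicity when $C(Q)<4$ --- triangle inequality, then Lemma~\ref{rot}, then the contrapositive of Lemma~\ref{rot-no-ac} --- is a correct expansion of what the paper compresses into ``in view of Theorem~\ref{uniqueness}''.

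The genuine flaw is in the first bullet of part (3), where you argue that for $C(Q)>4$ the three pairwise disjoint lines produced by the construction in the proof of Theorem~\ref{existence} form an acute-angled configuration, and then invoke Remark~\ref{acute} to conclude that $Q$ is mutation-acyclic. This step fails: that construction is designed to realize the \emph{cyclic} quiver $Q=(p,q,r)$ itself, so by condition (3) of Lemma~\ref{refl} the normals carry an odd number of positive inner products (in Fig.~\ref{three lines}(a) all three are positive, with $l_r$ separating $l_q$ from $l_p$), and by Remark~\ref{acute} such a configuration is precisely \emph{not} acute-angled --- an acute-angled triple can only represent an acyclic quiver. Nor can the inference be rescued by dropping the acute-angled claim: mere existence of a realization by reflections does not imply mutation-acyclicity, since by Theorem~\ref{uniqueness}(3) mutation-cyclic quivers with $C(Q)=4$ also admit reflection realizations. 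The correct deduction --- which your closing remark about ``invoking Theorem~\ref{uniqueness} in the correct direction'' gestures at but never actually executes --- is the contrapositive of Theorem~\ref{uniqueness}(2): mutation-cyclic implies $C(Q)\le 4$, so $C(Q)>4$ forces $Q$ to be mutation-acyclic; then Theorem~\ref{uniqueness}(1) supplies a realization by reflections, and your determinant computation, applied to an acyclic representative and using mutation-invariance of $C(Q)$, places it in $\H^2$. With that single step replaced, the rest of your argument goes through.
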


\begin{proof}
In view of Theorem~\ref{uniqueness} and Lemma~\ref{r<2}(a) we only need to prove that $C(Q)$ is responsible for the choice of the space  
$\H^2$, $\E^2$ and $\S^2$ when $Q$ is mutation-acyclic.
The choice of this space depends on the sign of the determinant 
$$
\det\begin{pmatrix} 
2&-p&-q\\
-p&2&-r\\
-q&-r&2\\
\end{pmatrix} =-2(p^2+q^2+r^2+pqr-4)=-2(C(Q)-4),
$$
compare also to~\cite{S}.

\end{proof}

As we have seen above, $C(Q)$ is responsible for the choice of reflection/$\pi$-rotation realization (via triangle inequality). In case of a realization by reflections $C(Q)$ is also responsible for the choice of the space where the reflection group acts.
This suggests that in case of realization by $\pi$-rotations there should be also some geometric consequences of the value of $C(Q)$.
In Proposition~\ref{lambda} we show that the sign of $C(Q)$ characterizes the isometry of $\H^2$ obtained as a composition of three $\pi$-rotations with respect to the points corresponding to $Q$.

\begin{remark}[Types of isometries of $\H^2$]
There are three types of orientation-preserving isometries of hyperbolic plane:
\begin{itemize}
\item[-] {\it elliptic}, i.e. preserving one point inside $\H^2$ and rotating all other points about the fixed point by the same angle;
\item[-] {\it parabolic}, i.e. preserving exactly one point on the boundary  $\partial \H^2$; 
\item[-] {\it hyperbolic}, i.e. preserving exactly two points $X,Y\in \partial \H^2$ and moving all other points along the line $XY$.
\end{itemize}

Using an upper halfplane model of $\H^2$, one can associate an element $\A_g\in PSL(2,\R)$ to each orientation-preserving isometry $g$.
Then the type of the isometry $g$ depends on the square of the trace of $\A_g$ (note that $\tr(\A_g)$ is defined up to sign, so $\tr^2(\A_g)$ is well-defined):
\begin{itemize}
\item[-] if $\tr^2(\A_g)<4$ then  $g$ is elliptic (rotation by angle $\alpha$ where $2\cos \alpha=\tr(\A_g)$); 
\item[-] if $\tr^2(\A_g)=4$ then  $g$ is parabolic;
\item[-] if $\tr^2(\A_g)>4$ then  $g$ is hyperbolic (translation by $d$ where $2\cosh d=\tr(\A_g)$).

\end{itemize}

Details can be found in~\cite{B}.

\end{remark}

\begin{prop}
\label{lambda}
Let $Q=(p,q,r)$ be a mutation-cyclic quiver, let $A,B,C\in \H^2$ be the points providing a realization of $Q$ by $\pi$-rotations.
Let $R_A,R_B,R_C$ be the corresponding $\pi$-rotations, and let $g=R_A\circ R_B\circ R_C$. Then 
\begin{itemize}
\item[-] if $C(Q)>0$ then $g$ is elliptic (rotation by $\alpha$, where $2\cos \alpha=\sqrt{4-C}$);
\item[-] if $C(Q)=0$ then $g$ is parabolic;
\item[-] if $C(Q)<0$ then $g$ is hyperbolic (translation by $d$, where $2\cosh d= \sqrt{4-C}$).

\end{itemize} 

\end{prop}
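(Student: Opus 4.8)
The plan is to compute the trace of the $PSL(2,\R)$-element corresponding to $g=R_A\circ R_B\circ R_C$ directly, and read off the type of $g$ from the trichotomy $\tr^2<4$, $=4$, $>4$ recalled in the preceding remark. The key point is that a $\pi$-rotation is the most rigid orientation-preserving isometry available: in the vector model it acts as $R_v(u)=-u-(u,v)v$ with $(v,v)=-2$, and on the Lie-algebra/matrix side a $\pi$-rotation about a point is (up to sign) an element of order two, i.e.\ traceless in $SL(2,\R)$. So I would first fix explicit representatives $\A_A,\A_B,\A_C\in SL(2,\R)$, each of trace $0$ and with $\A_\bullet^2=-I$, chosen so that $\A_A\A_B$, $\A_B\A_C$, $\A_A\A_C$ encode the hyperbolic distances $d(A,B),d(B,C),d(A,C)$ through their traces. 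Since a $\pi$-rotation squares to the identity in $PSL(2,\R)$, and the product of the two $\pi$-rotations about points at distance $d$ is a hyperbolic translation of length $2d$, we have $\tr^2(\A_\alpha\A_\beta)=(2\cosh d(\alpha,\beta))^2=|b_{\alpha\beta}|^2$; writing $p=2\cosh d(A,B)$ etc.\ (matching the weights of $Q$), this gives $\tr(\A_A\A_B)=\pm p$, and similarly for $q,r$.

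Next I would expand $\tr(\A_A\A_B\A_C)$ in terms of these pairwise traces. The engine here is the $SL(2)$ trace identity $\tr(XY)=\tr(X)\tr(Y)-\tr(XY^{-1})$, combined with $\A_\bullet^{-1}=-\A_\bullet$ (from trace $0$, $\A_\bullet^2=-I$). Using $\tr(\A_\bullet)=0$ repeatedly collapses most terms, and the Fricke-type identity for three matrices,
\[
\tr(\A_A\A_B\A_C)=\tr(\A_A\A_B)\,\tr(\A_C)-\tr(\A_A\A_B\A_C^{-1}),
\]
together with a second application to $\tr(\A_A\A_B\A_C^{-1})$, should reduce everything to the three pairwise traces $\pm p,\pm q,\pm r$ and a single product term. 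I expect the outcome to be $\tr(\A_A\A_B\A_C)=\pm\tfrac12\!\big(pqr-p^2-q^2-r^2\big)^{1/2}$-free in form; more precisely the natural identity to land on is
\[
\tr^2(\A_g)=p^2+q^2+r^2-pqr = 4-C(Q),
\]
where the signs of the pairwise traces conspire according to the cyclic orientation of $Q$ (this is exactly where the $-pqr$ rather than $+pqr$ appears, and it is the cyclic case that fixes the product sign). Granting this, the three cases are immediate: $\tr^2(\A_g)=4-C$, so $C>0\iff \tr^2<4$ (elliptic, with $2\cos\alpha=\tr=\sqrt{4-C}$), $C=0\iff \tr^2=4$ (parabolic), and $C<0\iff \tr^2>4$ (hyperbolic, with $2\cosh d=\tr=\sqrt{4-C}$), matching the three bullet points verbatim.

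The main obstacle is bookkeeping of signs rather than any genuine difficulty. Each $\tr(\A_\alpha\A_\beta)$ is only determined up to sign by the distance, and $\tr(\A_g)$ is itself defined only up to sign in $PSL(2,\R)$; what must be pinned down is the \emph{product} of the three pairwise signs, which is governed by whether the triple $(A,B,C)$ is positively or negatively oriented, i.e.\ by the cyclic structure of $Q$. I would handle this by choosing the representatives $\A_A,\A_B,\A_C$ consistently with a fixed orientation of the ideal triangle (or, cleanly, by normalizing the common perpendicular configuration as in the proof of Theorem~\ref{existence}) and then verifying the sign of the $pqr$ term in one explicit normalized position; since $C(Q)$ and the isometry type of $g$ are both conjugation-invariant, one representative configuration suffices. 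The remaining algebra is the routine trace expansion above, which I would not grind through here.
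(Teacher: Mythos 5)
Your route is genuinely different from the paper's, and its core idea is sound. The paper does not compute traces algebraically: it invokes Beardon's Theorem~11.5.1, which says $\tr(g)=2\lambda$ where $\lambda=\sinh a\sinh b\sin\gamma$ is the triangle invariant of the triangle $ABC$, and then checks $2\lambda=\sqrt{4-C(Q)}$ by the hyperbolic cosine rule with $\cosh a=p/2$, etc. You replace this citation-plus-trigonometry by pure $SL(2,\R)$ trace algebra: the points lift to traceless matrices squaring to $-I$, and everything is encoded in pairwise traces. Your version is self-contained (no appeal to Beardon) and makes the appearance of the Markov form transparent; the paper's version is shorter because the hard step is quoted rather than proved.

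However, as written your argument has three concrete defects, two of which must be repaired. First, your displayed ``natural identity'' $\tr^2=p^2+q^2+r^2-pqr=4-C(Q)$ is internally inconsistent: the middle expression \emph{is} $C(Q)$ by definition, so it cannot equal $4-C(Q)$. The identity you need (and the one you actually use in your last paragraph) is $\tr^2(g)=4-(p^2+q^2+r^2-pqr)=4-C(Q)$. Second, the engine you name does not run: applying $\tr(XY)=\tr(X)\tr(Y)-\tr(XY^{-1})$ with $\tr(Z)=0$ and $Z^{-1}=-Z$ returns the tautology $\tr(XYZ)=\tr(XYZ)$, so repeated use of this two-matrix identity collapses nothing. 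The computation does close, but via the anticommutation relation for traceless matrices: writing $X_iX_j+X_jX_i=\tr(X_iX_j)\,I$, $s_1=\tr(X_AX_B)$, $s_2=\tr(X_BX_C)$, $s_3=\tr(X_CX_A)$, and expanding $(X_AX_BX_C)^2$ using this relation together with $X_i^2=-I$ yields
\begin{equation*}
\tr\bigl((X_AX_BX_C)^2\bigr)=2-\bigl(s_1^2+s_2^2+s_3^2+s_1s_2s_3\bigr),
\qquad
\tr^2(X_AX_BX_C)=\tr\bigl((X_AX_BX_C)^2\bigr)+2 ,
\end{equation*}
so $\tr^2(g)=4-(s_1^2+s_2^2+s_3^2+s_1s_2s_3)$. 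Third, your sign worry is misplaced: flipping a lift $X_i\mapsto-X_i$ changes the sign of exactly two of $s_1,s_2,s_3$, so the product $s_1s_2s_3$ is independent of the choice of lifts, and it is always negative for any triple of points in $\H^2$ (choose all lifts in the same cone, or argue by continuity from coincident points); orientation of $(A,B,C)$ plays no role. Thus $s_i=-p,-q,-r$ exactly as in the realization conditions of Lemma~\ref{rot}, giving $\tr^2(g)=4-(p^2+q^2+r^2-pqr)=4-C(Q)$; the cyclic structure of $Q$ enters only through the definition of $C(Q)$, not through the geometry. With these repairs your proof is complete and correct.
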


\begin{proof}
The proof follows the ideas of~\cite[\textsection 11.5]{B}. To a triangle with sides $a,b,c$ and opposite angles $\alpha,\beta, \gamma$ one assigns a positive number $\lambda$ defined by
$$ \lambda:=\sinh a \sinh b \sin \gamma =\sinh b \sinh c \sin \alpha=\sinh c \sinh a \sin \beta,
$$
where the equality of these expressions follows from the (hyperbolic) sine rule.
Theorem~11.5.1 of~\cite{B} states that the trace of  $g=R_A\circ R_b\circ R_C$ equals $2\lambda$.
So, it is sufficient to prove that $2\lambda=\sqrt{4-C}$.

The proof of the latter statement is a short exercise in hyperbolic geometry using cosine law 
$$
\cos \gamma=\frac{\cosh c -\cosh a\cosh b}{\sinh a \sinh b} 
$$
and recalling that
$$
a=d_p=\arccosh \frac{p}{2}, \qquad \cosh a=\frac{p}{2}, \qquad \sinh a=\sqrt{\frac{p^2}{4}-1}.
$$

\end{proof}

\begin{remark}[Geometric meaning of $C(Q)$]
Theorem~\ref{uniqueness}, Corollary~\ref{cor unique} and Proposition~\ref{lambda} can be illustrated by Table~\ref{effects}.
We can see three geometric meanings of $C(Q)$:
\begin{itemize}
\item it tells whether $Q$ is mutation-acyclic or mutation-cyclic (i.e. admits realization by reflections or $\pi$-rotations);
\item for realization by reflections it chooses the space where the group $G$ acts;
\item for realization by rotations it tells the type of the product $g$ of three generators.

\end{itemize}

\end{remark}

\begin{table}[!h]
\begin{center}
\psfrag{S}{\small $\S^2, \ \ \ p,q,r<2$}
\psfrag{E}{\small $\E^2$}
\psfrag{H}{\small $\H^2$}
\psfrag{h}{\small $g$ is hyperbolic}
\psfrag{p}{\small $g$ parabolic}
\psfrag{e}{\small $g$ elliptic}
\psfrag{C}{ $C(Q)$}
\psfrag{0}{\color{red}  $0$}
\psfrag{4}{\color{red}  $4$}
\psfrag{a}{\small $Q$ is mutation-acyclic}
\psfrag{c}{\small $Q$ is mutation-cyclic, $p,q,r\ge 2$}
\epsfig{file=./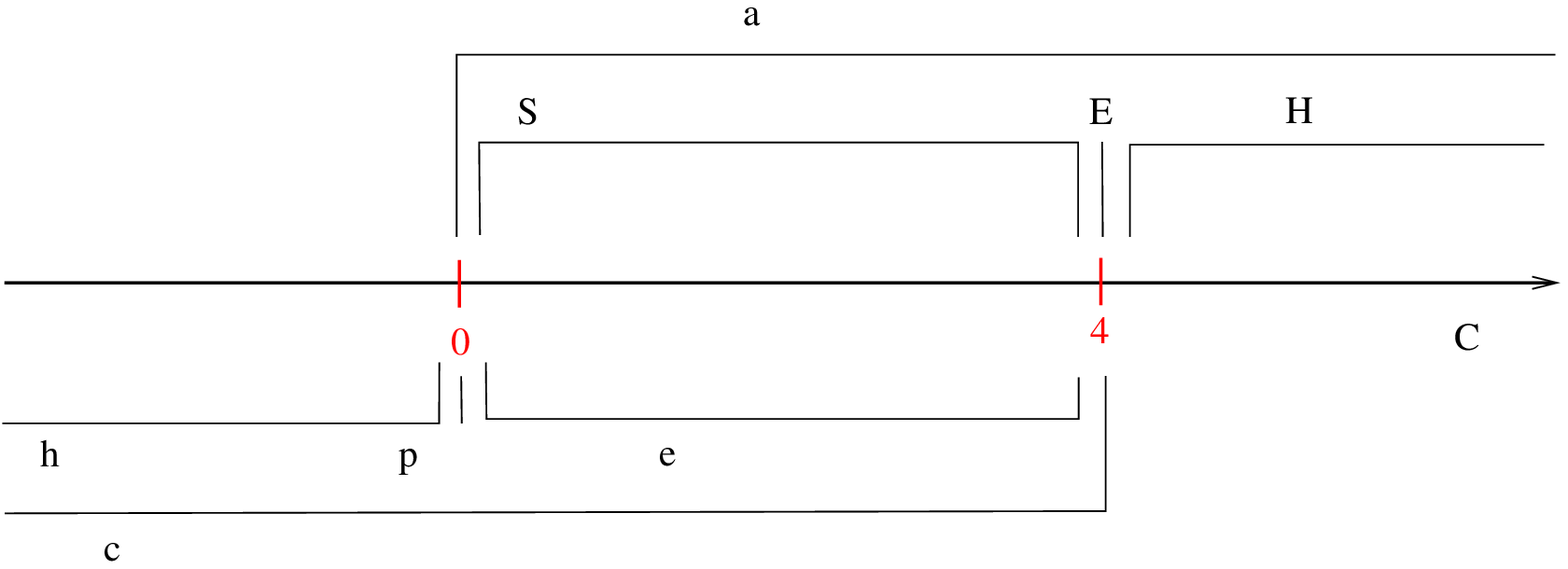,width=0.95\linewidth}
\caption{Geometric meanings of $C(Q)$.}
\label{effects}
\end{center}
\end{table}

\section{On discreteness of the groups realizing the mutation classes}
\label{s-discrete}

For each mutation-acyclic class we have constructed a group $G$ generated by three reflections,
and for each mutation-cyclic class we obtained a group $G$ generated by three $\pi$-rotations.
It is natural to ask whether the group acts in the corresponding space discretely. Here we collect the answers.

\subsection{Realizations by rotations} 
Let $G$ be a group generated by three $\pi$-rotations. 
It is shown in~\cite[Theorem~11.5.2]{B} that discreteness of $G$ is controlled by $\lambda$ defined in the proof of Proposition~\ref{lambda} (where we proved that $2\lambda=\sqrt{4-C(Q)}$). Reformulated in terms of $C(Q)$,  Theorem~11.5.2 of~\cite{B} implies that

\begin{itemize}
\item[(a)] if centers of the $\pi$-rotations are collinear, then 
\begin{itemize}
\item[ ] $G$ is discrete if and only if $d_p/d_q$ is rational (see Remark~\ref{rational-rem}).
\end{itemize}
\item[(b)] if centers of $\pi$-rotations are not collinear, then  
\begin{itemize}
\item[-] if $C(Q)<0$ then $G$ is discrete and has signature $(0:2,2,2;0;1)$;
\item[-] if $C(Q)=0$ then $G$ is discrete and has signature $(0:2,2,2;1;0)$;
\item[-] if $C(Q)>0$ then $G$ is discrete only if $\lambda=\frac{1}{2}\sqrt{4-C(Q)}$ takes one of the values:
$ \cos(\pi/k), k\ge 3; \qquad  \cos(2\pi/k), k\ge 5;  \qquad  \cos(3\pi/k), k\ge 7. 
$
\end{itemize}
\end{itemize}

\subsection{Realizations by reflections} 
Let $Q$ be a quiver admitting a realization via   a group $G$ generated by three reflections in $\S^2$, $\E^2$ or $\H^2$.
In view of Theorem~\ref{uniqueness} we can assume that $Q$ is mutation-acyclic (if $C(Q)=4$ then we are in assumptions of the case (a) above). Assume that $Q$ is acyclic itself  and denote $Q=(p,q,-r)$. Definition~\ref{def-realization-refl} (more precisely, property (3) of Lemma~\ref{refl}) implies that there is a choice of normals $v_1,v_2,v_3$ such that $(v_i,v_j)<0$, i.e. none of the three lines separates the other two, thus they bound some  domain $F$ in $\S^2$, $\E^2$ or $\H^2$ and this domain is {\it acute-angled} (i.e has no obtuse angles).

It is not always straightforward to see whether the group $G$ is discrete or not, but there are necessary conditions and there are sufficient conditions (see e.g. Example~9.8.5 in~\cite{B}):

\begin{itemize}
\item[(a)]{\bf  Necessary condition:} 
 If $G$ is discrete then either $p\ge 2$ or $p=2\cos{k\pi/l}$, where $k,l\in \Z_+$.
Similar conditions hold for $q$ and $r$.

\item[(b)]{\bf  Sufficient condition:} 
{  If each of $p,q,r$ either is greater or equal to $2$ or equals $2\cos{\pi/k}$ for some $k\in \Z_+$ (with possibly different $k$ for $p,q,r$),
then $G$ is discrete and $F$ is its fundamental domain.}

\item[(c)] { If necessary condition holds but the sufficient one does not} (i.e. all angles obtained are rational multiples of $\pi$, but not all of them are integer parts), then it is more involved to judge about the discreteness of $G$. Still,  based on~\cite{Cox,F,F1} one can see that:
\begin{itemize}
\item[-] { If $G$ is as above and acts on $\S^2$, then $G$ is discrete if and only if $(p,q,r)= (2\cos \pi t_1,2\cos \pi t_2,2\cos \pi t_3) $ where $(t_1,t_2,t_3)$ is one of 
$$
(\frac{1}{2},\frac{1}{3},\frac{2}{5}), \quad
(\frac{1}{3},\frac{1}{3},\frac{2}{5}), \quad 
(\frac{1}{2},\frac{1}{5},\frac{2}{5}), \quad 
(\frac{2}{5},\frac{2}{5},\frac{2}{5}).  
$$
}

\item[-] { If $G$ is as above and acts on $\E^2$, then $G$ is not discrete}\\
(as angles available in discrete reflection groups in $\E^2$ are integer multiples of $\pi/2, \pi/3, \pi/4$ and $\pi/6$ which do not produce acute angles of size $\pi k/l$ with coprime $k$ and $l$).

\item[-] { Suppose that $G$ is as above and acts on $\H^2$. If in addition $G$ is discrete and $F$ is tiled by finitely many fundamental domains of $G$,
then the tiling is as in Fig~\ref{tilings}.
To describe discrete groups with $F$ (of infinite volume) tiled by infinitely many fundamental domains one needs further investigations.

}

\begin{figure}[!h]
\begin{center}
\psfrag{a}{(1/2,1/k,2/k)}

\psfrag{(1/2,1/k,2/k)}{\scriptsize $(1/2,1/k,2/k)$}
\psfrag{(1/3,1/k,3/k)}{\scriptsize $(1/3,1/k,3/k)$}
\psfrag{(1/k,1/k,4/k)}{\scriptsize $(1/k,1/k,4/k)$}
\psfrag{(2/k,2/k,2/k)}{\scriptsize $(2/k,2/k,2/k)$}
\psfrag{(1/2,1/3,1/k)}{\scriptsize $(1/2,1/3,1/k)$}
\psfrag{(1/m,1/m,2/n)}{\scriptsize $(1/m,1/m,2/n)$}
\psfrag{(1/2,1/m,1/n)}{\scriptsize $(1/2,1/m,1/n)$}
\psfrag{(1/t,1/t,2/t)}{\scriptsize $(1/t,1/t,2/t)$}
\psfrag{(1/2,1/4,1/t)}{\scriptsize $(1/2,1/4,1/t)$}
\psfrag{(1/3,1/7,2/7)}{\scriptsize $(1/3,1/7,2/7)$}
\psfrag{(1/2,1/3,1/7)}{\scriptsize $(1/2,1/3,1/7)$}
\epsfig{file=./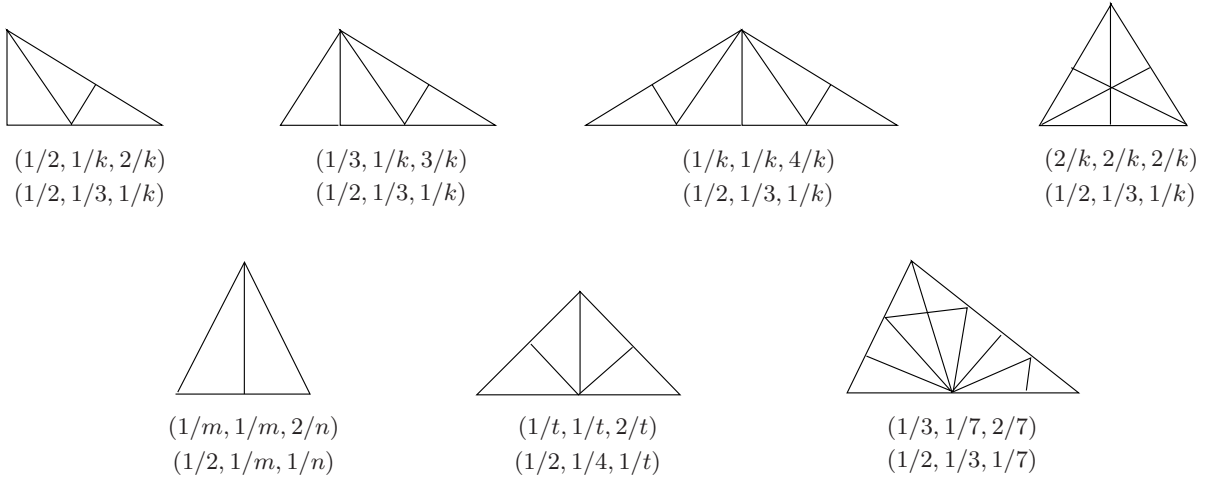,width=0.99\linewidth}
\caption{Tilings of hyperbolic triangles: the angles of the tiled triangle are written above the angles of the tiles. Here $k, m,n,t$ are integers satisfying  $k>6$, $1/m+1/n<1/2$, $t>4$, or they can be infinite. }
\label{tilings}
\end{center}
\end{figure}

\end{itemize}

\end{itemize}

\section{Classification of rank $3$ quivers of finite mutation type}
\label{s-fin}
In this section we use the geometric models constructed above to classify rank $3$ quivers with real weights having finite mutation classes.

The next lemma is obvious.

\begin{lemma}
If $Q=(p,q,\pm r)$ is mutation-finite quiver, then there is a minimal quiver in the mutation class (i.e. with minimal value of $p+q+r$).
Similarly, there is a maximal quiver in the mutation class.

\end{lemma}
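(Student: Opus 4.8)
The plan is to unwind the definition of finite mutation type and reduce the statement to the elementary fact that a finite nonempty set of real numbers attains both its least and its greatest element. By definition, $Q$ being mutation-finite means that its mutation class $\mathcal{M}(Q)$ --- the set of all quivers obtained from $Q$ by all sequences of iterated mutations --- is a finite set. This finiteness is really the only input the proof needs.

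First I would introduce the weight-sum function $s$ on the mutation class, sending each representative $Q'=(p',q',r')$ to $s(Q')=p'+q'+r'\in\mathbb{R}_{>0}$. Since the weights of a quiver are positive reals, $s$ takes positive values on every seed, irrespective of the cyclic/acyclic sign convention encoded in the notation $(p,q,\pm r)$. Because $\mathcal{M}(Q)$ is finite, the image $s(\mathcal{M}(Q))$ is a finite nonempty subset of $\mathbb{R}$, and such a set contains both a smallest and a largest element. Hence there exist representatives $Q_{\min},Q_{\max}\in\mathcal{M}(Q)$ with $s(Q_{\min})\le s(Q')\le s(Q_{\max})$ for every $Q'\in\mathcal{M}(Q)$; the quiver $Q_{\min}$ is then minimal and $Q_{\max}$ is maximal in the sense of the definition given in Section~\ref{s-def}, which proves both assertions at once.

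There is essentially no genuine obstacle here, which is why the lemma is stated as obvious: the whole content lies in the observation that finite mutation type means a finite mutation class, after which the conclusion is just the order-completeness of finite subsets of $\mathbb{R}$. The single point worth a word of care is that minimality is measured by the actual positive weights $p+q+r$ and not by the signed triple $(p,q,\pm r)$ used for the Markov constant, so that $s$ is well defined and positive on each seed; once this is noted, the existence of both a minimal and a maximal quiver follows immediately and simultaneously.
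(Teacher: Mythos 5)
Your proof is correct and is exactly the argument the paper has in mind: the authors state this lemma with the remark that it is obvious, and the intended content is precisely your observation that a finite mutation class makes the weight-sum function $p+q+r$ take only finitely many positive values, which therefore attains both a minimum and a maximum. Nothing further is needed.
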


\begin{lemma}
\label{2}
Let $Q=(p,q,r)$ or $Q=(p,q,-r)$ be a mutation-finite quiver, $p,q,r\in \R_{\ge 0}$. Then $p,q,r\le 2$. 

\end{lemma}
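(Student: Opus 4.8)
The plan is to prove the contrapositive: if some quiver in the mutation class carries a weight strictly greater than $2$, then the class is infinite, contradicting mutation-finiteness. So I would assume that some representative has a weight $>2$; replacing $Q$ by that representative and relabelling the vertices, I may take it to be the weight of the edge $\{1,3\}$, i.e. $|b_{13}|=r>2$. I work with connected $Q$, so at most one of $p,q$ vanishes (two zero weights would disconnect $Q$ and are excluded).

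The engine is the one-parameter family of mutations already analysed in the proof of Lemma~\ref{r<2}: mutate alternately at the two endpoints $1$ and $3$ of the long edge. Each such mutation only reverses the arrows at its vertex and alters the third edge, so $|b_{13}|=r$ is preserved throughout, while the varying weight runs through the Chebyshev recursion of Claim~$1$,
\[
f_{n+2}=r\,f_{n+1}-f_n,\qquad f_{-1}=p,\quad f_0=q,\qquad f_n=u_n(r)\,q-u_{n-1}(r)\,p,
\]
valid as long as the intermediate quivers stay cyclic, equivalently as long as $f_n>0$. Since $r>2$ I write $r=\rho+\rho^{-1}$ with $\rho>1$, so that $u_n(r)=(\rho^{n+1}-\rho^{-n-1})/(\rho-\rho^{-1})$ and the ratios $u_{n-1}(r)/u_n(r)$ increase to $1/\rho$. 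This yields a clean dichotomy: if $q/p>1/\rho$ then the forward sequence $(f_n)_{n\ge0}$ stays positive and $f_n\to+\infty$; if $q/p\le 1/\rho$ then $q/p<\rho$, and the backward sequence $(f_{-m})_{m\ge1}$ — governed by the same recursion with the roles of $p$ and $q$ exchanged — stays positive and tends to $+\infty$. Either way, one of the two mutation directions produces quivers with unboundedly large weights, hence infinitely many distinct quivers; the contradiction follows, and so $p,q,r\le 2$.

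The step I expect to be delicate is the justification that the signed recursion really records the mutated weights, i.e. that ``the sequence stays positive'' is equivalent to ``the quivers stay cyclic'' and that Claim~$1$ continues to apply in that regime. Once $f_n$ reaches $0$ or would turn negative the quiver becomes acyclic and the mutation at the next endpoint no longer propagates, so the linear recursion breaks; the point is that in the \emph{chosen} direction we never leave the positive regime, and the dichotomy guarantees such a direction exists precisely because $1/\rho<\rho$. The only remaining bookkeeping is the boundary value $q/p=1/\rho$ (there the forward sequence stays positive but tends to $0$, so one switches to the backward direction) and the path case $q=0$ (where the backward sequence is simply $f_{-m}=u_{m-1}(r)\,p\to\infty$); both are absorbed by the same dichotomy.

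As an independent check one can instead exploit the maximal quiver provided by the previous lemma. A connected acyclic quiver with at least two positive weights is never of maximal weight-sum, since mutating at a vertex incident to two positive edges strictly increases the sum; hence the maximal quiver is a cycle $(p,q,r)$, and maximality forces $qr\le 2p$, $rp\le 2q$, $pq\le 2r$, which on multiplying in pairs gives $p,q,r\le 2$. This pins the bound on one representative but does not by itself bound an arbitrary one, so the growth argument above is the line I would carry out in full.
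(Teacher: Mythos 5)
Your analysis of the cyclic case is correct (the monotonicity of the ratios $u_{n-1}(r)/u_n(r)$, the dichotomy on $q/p$ versus $1/\rho$, and the equivalence of positivity with cyclicity along the chosen direction all check out), but there is a genuine gap: the lemma explicitly covers acyclic quivers $Q=(p,q,-r)$, and your engine does not apply to an acyclic starting point. Claim~1 in the proof of Lemma~\ref{r<2} computes the mutated weights by $f_{n+1}=rf_n-f_{n-1}$ only under the hypothesis that \emph{every} quiver along the way, including the initial one, is cyclic. If the representative carrying the weight $r>2$ is acyclic with all weights positive, then the endpoints $1,3$ of the long edge are (in some order) a source/sink and possibly the flow-through vertex: mutating at a source or sink changes no weight at all, and mutating at the flow-through vertex produces the weight $qr+p$ (the sign rule for an acyclic quiver is $-p+p'=qr$), not $qr-p$ as your recursion asserts. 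So the sequence $(f_n)$ you analyse is simply not the sequence of weights of the mutated quivers, and ``positivity $\Leftrightarrow$ cyclicity'' is vacuous because cyclicity already fails at step $0$. Your treatment of $q=0$ escapes this only by accident: a zero weight kills the sign ambiguity, since $\pm 0+q'=pr$ gives $q'=pr$ either way.

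The repair is short and is exactly what the paper does: if the representative with a weight $>2$ is acyclic, apply sink/source mutations (which change nothing) until the relevant vertex is the flow-through one, then perform one genuine mutation there; this produces a \emph{cyclic} quiver whose weights dominate the old ones (one weight becomes $qr+p>2$), and only then does the cyclic analysis take over. For the cyclic case itself the paper's route is also much lighter than yours: ordering $p\ge q\ge r>0$ with $p>2$, the mutation replacing $r$ by $pq-r>2q-r\ge q$ strictly increases the total weight while keeping the quiver cyclic with largest weight $>2$, so iterating gives infinitely many quivers --- no Chebyshev asymptotics needed. Your closing ``independent check'' is this very idea run statically (maximality forces $pq\le 2r$ etc.), and you correctly observed it cannot bound an arbitrary representative; run dynamically, as unbounded growth, it does the whole job, and that growth argument together with the acyclic-to-cyclic reduction above \emph{is} the paper's proof.
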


\begin{proof}
Suppose first that $Q$ is cyclic, i.e. $Q=(p,q,r)$, and assume   $p\ge q\ge r>0$. If $p> 2$, then $r'=pq-r>2q-r\ge q$, which implies that the mutation class contains an infinite sequence of quivers with strictly increasing sum of weights, so $Q$ cannot be mutation-finite.

Similarly, suppose $Q=(p,q,-r)$ (i.e. $Q$ is acyclic) with $\max(p,q,r)>2$. Applying if needed sink/source mutations, we may assume 
 $Q=(r,q,-p)$ with $p\ge q\ge r>0$, $p>2$. Then, after one more mutation we get a cyclic quiver with $p'=qr+p>2$ which results in an infinite mutation class as shown above.

\end{proof}

A combination of Lemma~\ref{r<2} with Lemma~\ref{2} leads to the following.
 
\begin{cor}
\label{cor 2}
If $Q$ is a mutation-cyclic quiver of finite mutation type then $Q=(2,2,2)$. 

\end{cor}



Corollary~\ref{cor 2} implies that we only need to consider mutation-acyclic quivers. By Theorem~\ref{uniqueness}(1)
every quiver of this type is represented by reflections in one of the spaces $\S^2,\E^2$ and $\H^2$ (depending on the sign of $4-C(Q)$).  





\begin{remark}[Notation] Given a quiver $Q=(p,q,\pm r)$ we will number its vertices so that 
$p=|b_{12}|$, $q=|b_{23}|$, $r=|b_{31}|$. 

\end{remark}

\begin{lemma}
\label{rational}
Suppose that $Q=(p,q,\pm r)$ is mutation-finite. Then $p= 2\cos(\pi k/l)$ for some $ k\in \Z_{\ge 0}, l\in \Z_+$. The same holds for $q$ and $r$.

\end{lemma}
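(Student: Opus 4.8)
The plan is to reduce to the mutation-acyclic case and then run a geometric density argument inside the reflection realization. By Corollary~\ref{cor 2}, a mutation-cyclic finite quiver is $(2,2,2)$, whose weights are all $2=2\cos 0$, so I may assume $Q$ is mutation-acyclic and hence, by Corollary~\ref{cor: ac by relf}, carries a realization by reflections with lines $\Pi_1,\Pi_2,\Pi_3$ in $\S^2$, $\E^2$ or $\H^2$ and normals $v_1,v_2,v_3$ satisfying $(v_i,v_i)=2$, $|(v_i,v_j)|=|b_{ij}|$. Lemma~\ref{2} gives $p\le 2$; if $p=2$ (parallel lines) or $p=0$ ($=2\cos(\pi/2)$) the claim is immediate, so I assume $0<p<2$, in which case $\Pi_1$ and $\Pi_2$ meet at an angle $\theta$ with $p=2\cos\theta$. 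The goal becomes to show $\theta/\pi\in\Q$.

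First I would exhibit, for every $n$, a quiver in the mutation class whose third mirror is $\Pi_3$ rotated about $P=\Pi_1\cap\Pi_2$. Writing $\rho=r_1r_2$ for the rotation by $2\theta$ about $P$, the partial-conjugation rule $\mu_k(r_j)=r_kr_jr_k$ (when $b_{jk}>0$) shows that the sequence $(\mu_2\mu_1)^n$ sends $r_3$ to $\rho^{\,n}r_3\rho^{-n}$, i.e. to the reflection in $\rho^{\,n}\Pi_3$; thus the $n$-th mutated triple is a genuine quiver of the class. The weight between this rotated line and $\Pi_1$ equals $|(\rho^{\,n}v_3,v_1)|=|(v_3,\rho^{-n}v_1)|$, and since $\rho$ acts as an isometric rotation by $2\theta$ on the plane $\langle v_1,v_2\rangle$ while fixing its orthogonal line, decomposing $v_3$ along and across $\langle v_1,v_2\rangle$ gives
$$(\rho^{\,n}v_3,v_1)=A\cos(2n\theta-\psi)$$
for constants $A,\psi$ depending only on the initial configuration. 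Crucially $A\ne 0$: if it vanished then $v_3\perp\langle v_1,v_2\rangle$, forcing $q=r=0$ and a disconnected quiver. The point of working in the reflection model is that this recursion is honest linear algebra and does not depend on the cyclic/acyclic orientations encountered along the way.

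To finish, I would invoke Lemma~\ref{2} once more: in a mutation-finite class every weight is $\le 2$, so no pair of lines ever diverges, and the realized weights are exactly the numbers $2|\!\cos(2n\theta-\psi)|\in[0,2]$. If $\theta/\pi$ were irrational then $\{2n\theta\bmod\pi\}$ is dense, so these weights would take infinitely many distinct values, producing infinitely many non-equivalent quivers and contradicting mutation-finiteness. Hence $\theta=\pi k/l$ and $p=2\cos(\pi k/l)$; relabelling the vertices yields the same conclusion for $q$ and $r$. The main obstacle is the bookkeeping in the second paragraph: verifying that the sign conditions $b_{jk}>0$ can be arranged so that $(\mu_2\mu_1)^n$ genuinely realizes conjugation by $\rho^{\,n}$ as a valid sequence of quiver mutations. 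This is resolved using Remark~\ref{signed-mut}: the two sign conventions for mutating a configuration differ only by reflection in $v_k$, hence leave every $|(v_i,v_j)|$ unchanged, so the magnitudes $2|\!\cos(2n\theta-\psi)|$ — which are all that the counting argument needs — are insensitive to the ambiguity.
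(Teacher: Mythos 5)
Your proposal takes essentially the same route as the paper's proof: the paper also alternates $\mu_1$ and $\mu_2$ (which preserve the weight $p$, hence the angle $\theta$), observes that the resulting configurations differ by iterated reflections in the two lines through their intersection point — i.e.\ by powers of the rotation by $2\theta$ — and gets infinitely many distinct weights from irrationality of $\theta/\pi$. The only real difference is cosmetic: the paper keeps the third line fixed and rotates the pair, you keep the pair fixed and rotate the third line; up to conjugation these are the same picture, and your explicit formula $A\cos(2n\theta-\psi)$ with the check $A\neq 0$ is a welcome sharpening of the paper's ``infinitely many intersection points give infinitely many angles.''

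One caveat on the step you yourself flag as the main obstacle: Remark~\ref{signed-mut} does not resolve it. That remark only says that the two \emph{sign conventions} for a single geometric mutation produce configurations differing by reflection in $v_k$, i.e.\ isometric ones. It does not control \emph{which} vector is forced to move: that is dictated by the arrow orientations of the current quiver (if $b_{21}>0$ rather than $b_{31}>0$, then $\mu_1$ reflects $v_2$, not $v_3$; and if the mutated vertex is a sink or source, the configuration does not move at all up to isometry). Consequently $(\mu_2\mu_1)^n$ need not literally conjugate $r_3$ by $\rho^n$; the sequence can stall on sink/source steps, and sometimes it is the pair $\Pi_1,\Pi_2$ rather than $\Pi_3$ that reflects. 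What saves the argument is that, up to isometry, each non-trivial step appends one alternating letter $r_1$ or $r_2$ to the word acting on $\Pi_3$, and one checks from the orientations that trivial steps occur in pairs preserving the alternation, so configurations $\{\Pi_1,\Pi_2,w\Pi_3\}$ for alternating words $w$ of arbitrary length are all reached and your counting argument applies. To be fair, the paper asserts exactly this geometric picture without the orientation case analysis either (Fig.~\ref{irr}), so your proof matches its level of rigor; but the justification via Remark~\ref{signed-mut} is aimed at the wrong ambiguity.
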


\begin{proof}
By Lemma~\ref{2} we have $p,q,r\le 2$, so the lines $l_p$, $l_q$ and $l_r$ in the realization of $Q$ intersect each other forming some angles $\theta_p$,
 $\theta_q$, $\theta_r$ (if $p=2$ then the lines $l_q$ and $l_r$ are parallel, i.e. $\theta_p=0$). 

Suppose $Q=(p,q,\pm r)$ and  $p=2\cos \theta_p$. 
Applying $\mu_2$ and $\mu_1$ alternately, we will get infinitely many triples of lines $(l_p^{(n)},l_q^{(n)},l_r^{(n)})$ where $l_p=l_p^{(n)}$ and all lines 
$l_q^{(n)},l_r^{(n)}$ pass through the same point $O=l_q\cap l_r$ and form the same angle $\theta_p=\angle(l_q^{(n)},l_r^{(n)})=\angle(l_r^{(n)},l_q^{(n+1)})$,
see Fig.~\ref{irr}. If $\theta_p $ is not a rational multiple of $\pi$, then there are infinitely many intersection points of lines $l_r^{(n)}$ with $l_p$, which results in infinitely many different angles. Thus, the quivers obtained from $Q$ by alternating mutations  $\mu_2$ and $\mu_1$ will contain infinitely many different entries, which implies that $Q$ cannot be mutation-finite.

\end{proof}

\begin{figure}[!h]
\begin{center}
\psfrag{lp}{\scriptsize \color{blue} $l_p$}
\psfrag{lq}{\scriptsize \color{blue} $l_q$}
\psfrag{lr}{\scriptsize \color{blue} $l_r$}
\psfrag{lrn}{\scriptsize \color{dgreen} $l_r^{(n)}$}
\psfrag{lqn}{\scriptsize \color{dgreen} $l_q^{(n)}$}
\psfrag{tp}{\scriptsize  $\theta_p$}
\psfrag{tq}{\scriptsize  $\theta_q$}
\psfrag{tr}{\scriptsize  $\theta_r$}
\psfrag{O}{\scriptsize  $O$}
\epsfig{file=./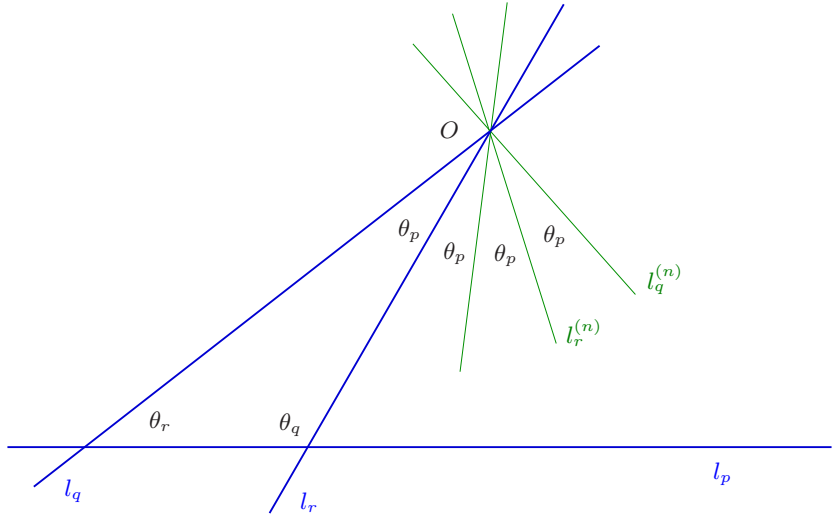,width=0.69\linewidth}
\caption{Angles are $\pi$-rational in mutation-finite case.}
\label{irr}
\end{center}
\end{figure}

\begin{lemma}
\label{H}
Let $Q$ be a mutation-acyclic quiver having a realization by reflections in $\H^2$ (i.e. $C(Q)>4$).
Then $Q$ is not mutation-finite.

\end{lemma}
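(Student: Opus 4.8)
The plan is to argue by contradiction: assume $Q$ is mutation-finite. Since mutation-finiteness and the value of $C(Q)$ depend only on the mutation class, I may replace $Q$ by an acyclic representative and write $Q=(p,q,-r)$ with $p,q,r\ge 0$. By Lemma~\ref{2} every quiver in the class then has all weights $\le 2$, so in the realization by reflections in $\H^2$ (which exists because $C(Q)>4$, see Theorem~\ref{uniqueness} and Corollary~\ref{cor unique}) no two of the lines $l_p,l_q,l_r$ diverge: each pair either intersects (weight $<2$) or is asymptotically parallel (weight $=2$). Thus the lines bound an acute hyperbolic triangle (Remark~\ref{acute}), and since $C(Q)>4$ this triangle has positive area, i.e. the sum of its angles is strictly less than $\pi$. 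Moreover, by Lemma~\ref{rational} each angle is a rational multiple of $\pi$.

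Next I would exploit the alternating mutations $\mu_1,\mu_2$ used in the proof of Lemma~\ref{rational}. Choose a genuine (finite) vertex $O=l_q\cap l_r$, with interior angle $\theta$ and at hyperbolic distance $h>0$ from the opposite line $l_p$. These alternating mutations keep $l_p$ fixed and rotate the two lines through $O$ around $O$ by $\theta$ at every step, producing lines $l^{(m)}$ through $O$ whose directions form the arithmetic progression $\psi_0+m\theta \pmod\pi$. The decisive geometric input is the \emph{angle of parallelism}: writing $\Pi(h)=2\arctan e^{-h}<\tfrac\pi2$, a line through $O$ meets $l_p$ precisely when its direction lies within $\Pi(h)$ of the perpendicular from $O$ to $l_p$, so the directions meeting $l_p$ fill a sector of angular width $2\Pi(h)<\pi$. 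Any $l^{(m)}$ whose direction falls in the complementary sector, of \emph{positive} width $\pi-2\Pi(h)$, diverges from $l_p$; the corresponding weight is then $2\cosh d(l^{(m)},l_p)>2$, contradicting Lemma~\ref{2}. (The purely degenerate configurations are disposed of directly: if all three weights equal $2$ the triangle is ideal and a single mutation of $(2,2,-2)$ already yields a weight equal to $6$, so one picks $O$ to be an honest finite vertex whenever one exists.)

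It remains to guarantee that some $l^{(m)}$ really lands in the divergent sector. If $\theta/\pi$ were irrational the directions $\psi_0+m\theta$ would be dense in $[0,\pi)$ and we would be done immediately; but Lemma~\ref{rational} forces $\theta=k\pi/l$, so the attained directions are the $l'=l/\gcd(k,l)$ equally spaced values $\psi_0+j\pi/l'$, and they fail to meet the divergent sector exactly when $\pi-2\Pi(h)<\pi/l'$, i.e. when $O$ is too close to $l_p$. \textbf{This resonant case is the main obstacle.} I would resolve it by showing it cannot occur simultaneously at all three vertices. Because a hyperbolic triangle is determined up to congruence by its angles, the three distances $h_i$ and orders $l'_i$ are functions of the angle triple alone, so the claim ``$\pi-2\Pi(h_i)\ge \pi/l'_i$ for some $i$'' is a closed condition on rational-angle hyperbolic triangles; the inequality can only fail at every vertex when all $h_i$ are very small, i.e. when the angle sum is very close to $\pi$, and the rational-angle triples with sum just below $\pi$ form a discrete family clustering at the finitely many Euclidean rational triangles.

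Thus the argument reduces to a bounded trigonometric check: for each admissible rational angle triple with sum $<\pi$ one verifies, using $\cosh h_i$ computed from the hyperbolic cosine rule together with $\sinh h_i=\sinh(\mathrm{side})\sin(\mathrm{angle})$, that $\pi-2\Pi(h_i)\ge\pi/l'_i$ at some vertex — a computation that is immediate away from the Euclidean limit (for instance, for $(1,1,\sqrt2)=(\tfrac\pi3,\tfrac\pi3,\tfrac\pi4)$ the vertex of angle $\tfrac\pi4$ already has $\pi-2\Pi(h)\approx 80^\circ>45^\circ$) and needs genuine care only for the finitely many families approaching a Euclidean triangle. Once a divergent line is produced at such a vertex, the resulting quiver has a weight exceeding $2$, contradicting Lemma~\ref{2} and proving that $Q$ is not mutation-finite.
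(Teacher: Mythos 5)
Your strategy coincides with the paper's up to its decisive step: fix $l_p$, rotate the other two lines about $O=l_q\cap l_r$ by alternating mutations, and force a line that cannot coexist with $l_p$ under Lemma~\ref{2}. The genuine gap is exactly the ``resonant case'' you flag and then do not close. Because your forbidden region consists \emph{only} of the divergent directions, its width $\pi-2\Pi(h)$ can be arbitrarily small compared with the angular step, and your proposed repair --- that ``$\pi-2\Pi(h_i)\ge\pi/l_i'$ at some vertex'' reduces to a \emph{bounded} trigonometric check concentrated near finitely many Euclidean rational triangles --- is an assertion, not a proof. Each Euclidean rational triangle is approached by \emph{infinitely many} rational-angle hyperbolic triples (e.g. angles $(\pi/3,\pi/3,\pi/3-\pi/n)$, whose third denominator is $3n$), and along such families both quantities you must compare, $\pi-2\Pi(h_i)$ and $\pi/l_i'$, tend to $0$; deciding the inequality requires a two-scale asymptotic analysis (heights scale like the square root of the angle defect, spacings like a rational multiple of the defect), uniform over degenerating shapes. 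None of this is carried out, no finite list of cases is ever produced, and so the argument is incomplete precisely at its crux.

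The idea you are missing, which is how the paper kills resonance outright, is to use mutation-finiteness \emph{before} setting up the sector: among the finitely many quivers in the class there is a smallest realized angle $\theta_{\mathrm{min}}$, and one runs the alternating mutations at a vertex $O$ realizing $\theta_{\mathrm{min}}$, so the rotation step is exactly $\theta_{\mathrm{min}}$. The forbidden sector is then enlarged to consist of all directions through $O$ that are disjoint from $l_p$, parallel to $l_p$, \emph{or meet $l_p$ at angle less than} $\theta_{\mathrm{min}}$; its two boundary lines meet $l_p$ at angle exactly $\theta_{\mathrm{min}}$, and since the angle sum of a hyperbolic triangle is less than $\pi$, this sector has width $\alpha=\pi-\angle S_1OS_2>2\theta_{\mathrm{min}}$, strictly larger than the step. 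Hence some mutated line lands strictly inside it \emph{regardless} of any rationality or arithmetic of $\theta_{\mathrm{min}}/\pi$, and every possible outcome is a contradiction: disjoint contradicts Lemma~\ref{2}, parallel contradicts the separately handled case of a weight equal to $2$ (which the paper, like you, dispatches first), and a smaller intersection angle contradicts minimality of $\theta_{\mathrm{min}}$. If you replace your parallelism-angle sector by this one, your write-up becomes a complete proof; note also that this route uses mutation-finiteness only through the existence of $\theta_{\mathrm{min}}$, so the rationality input from Lemma~\ref{rational} is not needed here at all.
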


\begin{proof}
By Lemma~\ref{2}, $p,q,r\le 2$, i.e. every quiver in the mutation class is represented by a triple $l_p,l_q,l_r$ of mutually intersecting (or parallel) lines.
First, suppose $p=2$ (i.e. $\theta_p=0$ and $l_q$ is parallel to $l_r$). 
By assumption $C(Q)>4$, which implies that 
$l_p,l_q,l_r$  are not mutually parallel. Hence, after several mutations preserving $l_p$ we will get a triple of lines $(l_p,l_q^{(n)},l_r^{(n)})$
 where $l_p$ is disjoint from $l_q^{(n)}$ and $l_r^{(n)}$, see Fig~\ref{no-hyp}(a). This contradicts  Lemma~\ref{2}.

\begin{figure}[!h]
\begin{center}
\psfrag{lp}{\scriptsize \color{blue} $l_p$}
\psfrag{lq}{\scriptsize \color{blue} $l_q$}
\psfrag{lr}{\scriptsize \color{blue} $l_r$}
\psfrag{lrn}{\scriptsize \color{dgreen} $l_r^{(n)}$}
\psfrag{lqn}{\scriptsize \color{dgreen} $l_q^{(n)}$}
\psfrag{tp}{\scriptsize  $\theta_p$}
\psfrag{tq}{\scriptsize  $\theta_q$}
\psfrag{tr}{\scriptsize  $\theta_r$}
\psfrag{tm}{\scriptsize  $\theta_{\mathrm{min}}$}
\psfrag{al}{\scriptsize  $\alpha$}
\psfrag{O}{\scriptsize  $O$}
\psfrag{s1}{\scriptsize  $S_1$}
\psfrag{s2}{\scriptsize  $S_2$}
\psfrag{a}{\small (a)}
\psfrag{b}{\small (b)}
\epsfig{file=./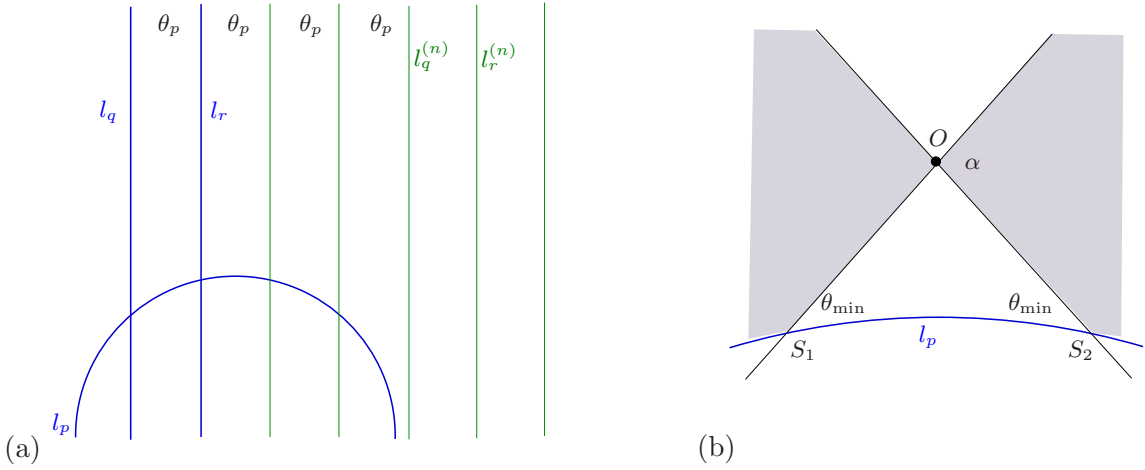,width=0.95\linewidth}
\caption{Hyperbolic case. (We use upper half-plane model on the left and Poincar\'e disc model on the right).}
\label{no-hyp}
\end{center}
\end{figure}

So, $Q$ (as well as every quiver in its mutation class) is realized by a triple of lines forming a compact triangle.
The angles $\theta_p, \theta_q, \theta_r$ in this triangle representing  $Q=(p,q,\pm r)$ are functions of  $p,q,r$: 
$p=2\cos \theta_p$ (and similar for $q$ and $r$). So, if $Q$ is mutation-finite then there is a smallest non-zero angle $\theta_{\mathrm{min}}$ such that $\theta_{\mathrm{min}}$ appears as an angle for a realization of some $Q'$ in the mutation class of $Q$.

Consider the realization $T_0=(l_p,l_q,l_r)$ of the quiver $Q'$ and let $\theta_{\mathrm{min}}=\angle(l_q,l_r)$. Applying alternately mutations  $\mu_2$ and $\mu_1$ 
as in the proof of Lemma~\ref{rational} (so that $l_p$ is always preserved and the image of $l_q$ is reflected with respect to the image of $l_r$ or vice versa), we will get further triangles $T_i$ realizing different quivers in the mutation class. Our aim is to show that either some of the triangles $T_i$ contains an angle smaller than $\theta_{\mathrm{min}}$ or some of $T_i$ has two disjoint sides (in contradiction with Lemma~\ref{2}).

Let $O=l_q\cap l_r$. Consider the lines through $O$ forming the angle $\theta_{\mathrm{min}}$ with $l_p$ (see Fig.~\ref{no-hyp}(b)), let $S_1$ and $S_2$ be the intersection points of these lines with $l_p$. Let $\alpha$ be the angle formed by these lines (see Fig.~\ref{no-hyp}(b)).
Each of the triangles $T_i$ has $O$ as a vertex, and as the sum of angles in a hyperbolic triangle is less than $\pi$, we have
$
\angle S_1OS_2<\pi - 2 \theta_{\mathrm{min}}, 
$
which implies that  $$\alpha=\pi-\angle S_1OS_2>2\theta_{\min}.$$
This means that at least one of the triangles $T_i$ will have a side crossing the grey domain between the lines. However, such a line will either be disjoint from $l_p$ or parallel to $l_p$ (contradicting Lemma~\ref{2} or the case considered above respectively), or it will cross $l_p$ at an angle smaller than $\theta_{\mathrm{min}}$ which is not possible either. The contradiction completes the proof of the lemma.

\end{proof}

\begin{lemma}
\label{E}
Suppose that $Q$ is mutation-acyclic and has a  realization by reflections in $\E^2$ (i.e. $C(Q)=4$).
Then the following conditions are equivalent:
\begin{itemize}
\item[(a)] $Q$ is mutation-finite;
\item[(b)] $Q=(p_1,p_2,\pm p_3)$ with $p_i=2\cos (\pi t_i)$, where $t_i\in \Q$;
\item[(c)] $Q$ is mutation-equivalent to  $(2\cos (\pi/n ),2\cos(\pi/n),2)$, where   $n\in \Z_+$.

\end{itemize}

\end{lemma}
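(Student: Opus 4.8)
The plan is to prove the cycle of implications (a)$\Rightarrow$(b)$\Rightarrow$(c)$\Rightarrow$(a). The implication (a)$\Rightarrow$(b) is immediate from Lemma~\ref{rational}: if $Q$ is mutation-finite then each weight is of the form $2\cos(\pi k_i/l_i)$ with $k_i\in\Z_{\ge 0}$, $l_i\in\Z_+$, i.e. $t_i=k_i/l_i\in\Q$. The remaining two implications both rest on understanding how mutations act on the \emph{directions} of the lines in the Euclidean realization.

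The common engine is the following. Since $C(Q)=4$ the realization lives in $\E^2$, and by Lemma~\ref{2} every weight occurring in the class is $\le 2$; hence in each seed the three lines $l_p,l_q,l_r$ pairwise intersect or are parallel, and a weight equals $2\cos\theta$ where $\theta$ is the acute angle between the corresponding directions. Assigning to each line its direction $\phi\in\R/\pi\Z$, a mutation $\mu_k$ reflects the line $l_j$ in $l_k$ (this is exactly the partial reflection $v_j\mapsto v_j-(v_j,v_k)v_k$), which on directions is the map $\phi_j\mapsto 2\phi_k-\phi_j$ in $\R/\pi\Z$. Thus the triple of weights in any seed is determined by the triple of directions, and the directions reachable by mutation form the orbit of $\{\phi_p,\phi_q,\phi_r\}$ under the point reflections of $\R/\pi\Z$ based at the current directions.

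For (c)$\Rightarrow$(a): for $Q=(2\cos(\pi/n),2\cos(\pi/n),2)$ two of the lines are parallel and the third meets them at angle $\pi/n$, so after normalising one direction to $0$ all three directions lie in the finite set $\frac{\pi}{n}\Z/\pi\Z$. The reflections $\phi\mapsto 2\phi_k-\phi$ preserve this set, so only finitely many direction triples, and hence finitely many weight triples, occur; therefore $Q$ is mutation-finite. For (b)$\Rightarrow$(c) I would again normalise $\phi_p=0$, so that condition (b) makes all three directions rational multiples of $\pi$; let $n$ be the least integer with all directions in $\frac{\pi}{n}\Z/\pi\Z$, and record them as three marks on the cyclic group $\Z/n$ with spacing $\pi/n$. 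Minimality of $n$ forces the three gaps $g_1,g_2,g_3$ between consecutive marks (which sum to $n$) to satisfy $\gcd(g_1,g_2,g_3)=1$. A reflection of one mark in an adjacent one replaces a pair of adjacent gaps by their difference while preserving both the total $n$ and the gcd, i.e. it is exactly a step of the subtractive Euclidean algorithm. Running it, one can make a gap equal to $1$ and then drive another gap to $0$, forcing two directions to coincide; then two lines are parallel (one weight equals $2$), and since $\gcd=1$ is preserved, reflecting within the pencil of the two parallel lines brings the transversal to angle exactly $\pi/n$, so the two surviving angles become $\pi/n$ and $\pi-\pi/n$, giving weights $2\cos(\pi/n)$ each. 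This yields mutation-equivalence to $(2\cos(\pi/n),2\cos(\pi/n),2)$.

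I would leave the routine bookkeeping implicit, namely the acyclic-versus-cyclic sign conventions and the precise effect of each reflection on the ordered triple of gaps. The main obstacle is the last implication (b)$\Rightarrow$(c): one must check carefully that the reflection dynamics on directions genuinely realises the subtractive Euclidean algorithm on the gaps, and that once $\gcd=1$ one can both collide two directions \emph{and} reduce the surviving angle to the primitive value $\pi/n$ rather than to some $\pi k/n$ with $k>1$.
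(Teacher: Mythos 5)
Your overall structure is sound, and two of your three implications coincide with the paper's: (a)$\Rightarrow$(b) is exactly Lemma~\ref{rational}, and your finite-direction-set argument for (c)$\Rightarrow$(a) is the same mechanism the paper uses to prove (b)$\Rightarrow$(a) (angles stay in $\frac{\pi}{n_1n_2}\Z/\pi\Z$, so only finitely many weights occur), after which (c)$\Rightarrow$(b) is trivial. Where you genuinely diverge is the last implication: the paper proves (a)$\Rightarrow$(c) by a minimal-angle argument --- take the smallest angle $\theta_{\mathrm{min}}$ occurring in any realization in the class, alternate the two mutations fixing the third line $l_0$, and observe that since no line of the resulting pencil may meet $l_0$ at an angle smaller than $\theta_{\mathrm{min}}$, one must have $\theta_{\mathrm{min}}=\pi/n$ with some line of the pencil parallel to $l_0$, exhibiting $(2\cos(\pi/n),2\cos(\pi/n),2)$ directly --- whereas you prove (b)$\Rightarrow$(c) by a subtractive Euclidean algorithm on the gap triple. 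Your route is viable: reflecting a mark in an adjacent mark sends the gaps $(u,v,w)$ (with $u,v$ the two gaps at the mirror) to $(\min(u,v),\,|u-v|,\,w+\min(u,v))$, preserving the sum $n$ and the gcd, and any two of the three gaps share a mark, so Euclid can be run on any pair. One point you elide does need saying: which line $\mu_k$ reflects is dictated by the sign pattern of the quiver, not chosen freely; but reflecting the other line instead yields a configuration differing by the isometry $r_k$, so up to conjugation (which is all that weights and the parity condition see, cf.\ Remark~\ref{signed-mut}) the free choice is legitimate.

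The genuine flaw is the final clause, which you yourself flag. Once two directions coincide, say gaps $(0,k,n-k)$ with $k>1$, ``reflecting within the pencil of the two parallel lines'' cannot finish the job: reflecting the transversal in either parallel line acts on its direction by $\phi\mapsto-\phi$, i.e.\ only swaps $k\leftrightarrow n-k$, and any move that actually changes $k$ must reflect a parallel line in the transversal, destroying the parallelism. The repair is to organise the algorithm so that this situation never needs fixing: first run Euclid on one pair to reach gaps $(d,0,n-d)$ with $d=\gcd(g_1,g_2)$; since $\gcd(d,n-d)=\gcd(g_1,g_2,n)=\gcd(g_1,g_2,g_3)=1$ (the last equality by minimality of $n$), run Euclid on the pair $(d,n-d)$ --- this temporarily re-opens the zero gap, which is harmless --- terminating at the pair $(1,0)$ with third gap $n-1$. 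The resulting configuration, gaps $\{0,1,n-1\}$, is precisely two parallel lines and a transversal at angle $\pi/n$, i.e.\ a realization of $(2\cos(\pi/n),2\cos(\pi/n),2)$ (that it realizes the cyclic rather than an acyclic quiver with these weights is forced by invariance of $C(Q)=4$), and no ``in-pencil'' step is needed. With this reordering your cycle of implications closes correctly.
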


\begin{proof}
Condition (a) implies (b) by Lemma~\ref{rational}. 
Next, (b) says that in the realization $(l_0,l_1,l_2)$, one has $\angle(l_1,l_0)=k_1\pi/n_1$ and $\angle(l_2,l_0)=k_2\pi/n_2$
for some $k_i,n_i\in \Z_+$. 
This implies that under the mutations one can only obtain angles of size $k \pi/n_1n_2$, where $k\in \Z_+$, $k<n_1n_2$. So, in any quiver mutation-equivalent to $Q$ the weights can only take finitely many values $2\cos (k \pi/n_1n_2)$, which results in finitely many quivers in the mutation class.
This shows equivalence of (a) and (b). Obviously, (c) implies (b). We are left to show that (c) follows from either (a) or (b).

Assume (a) holds, i.e. $Q$ is mutation-finite. Then there is a minimal angle $\theta_{\mathrm{min}}$ obtained as an angle between the lines 
in a realization of some quiver $Q'$ in the mutation class of $Q$.  Assume that $\theta_{\mathrm{min}}=\angle(l_1,l_2)$ and consider the alternating sequence of 
mutations $\mu_1$ and $\mu_2$.
Up to conjugation, we may assume that all these mutations preserve $l_0$ and reflect the image of $l_2$ with respect to the image of $l_1$ (or vice versa).
We obtain finitely many lines $l_1,l_2,\dots,l_m$  through $O=l_1\cap l_2$, any two adjacent lines $l_i$ and $l_{i+1}$ form an angle $\theta_{\min}$ and belong to a realization of one quiver (together with $l_0$). As the angle formed by $l_0$ and any  of these lines cannot be smaller than $\theta_{\mathrm{min}}$, we conclude that $\theta_{\mathrm{min}}=\pi/n$ for some integer $n$, and one of $l_1, \dots, l_m$, say $l_i$, is parallel to $l_0$, see Fig.~\ref{eucl}. Then the lines $(l_i,l_{i-1},l_0)$ form a realization of some quiver $Q''$ in the mutation class of $Q$, where $Q''=(2\cos \theta_{\mathrm{min}},2\cos\theta_{\mathrm{min}},2)$, $\theta_{\mathrm{min}}=\pi/n$. This shows that (a) implies (c). 


\end{proof}

\begin{figure}[!h]
\begin{center}
\psfrag{l0}{\scriptsize \color{blue} $l_0$}
\psfrag{li}{\scriptsize \color{blue} $l_i$}
\psfrag{li1}{\scriptsize \color{blue} $l_{i-1}$}
\psfrag{tm}{\scriptsize  $\theta_{\mathrm{min}}$}
\epsfig{file=./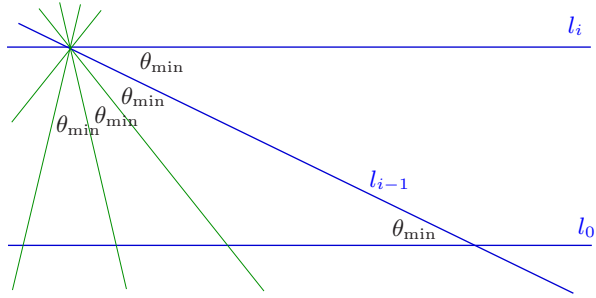,width=0.49\linewidth}
\caption{Angles are $\pi$-rational in mutation-finite case.}
\label{eucl}
\end{center}
\end{figure}

\begin{remark}
The acyclic representative in the mutation class of $(2\cos (\pi/n ),2\cos(\pi/n),2)$ is 
\begin{itemize}
\item $(2\cos\frac{\pi}{n} ,2\cos(\frac{\pi}{2}-\frac\pi {2n} ),2\cos(\frac\pi 2+\frac\pi {2n} )) $, if $n$ is odd;
\item $(2\cos\frac{\pi}{n} ,2\cos(\frac\pi 2-\frac\pi n ),0) $, if $n$ is even.
\end{itemize}

\end{remark}




\begin{lemma}
\label{S}
Suppose that $Q$ is mutation-acyclic and has a  realization by reflections in $\S^2$ (i.e. $C(Q)<4$).
If $Q$ is mutation-finite then $Q$ is mutation-equivalent to $$(2\cos (\pi t_1),2\cos(\pi t_2),0)$$ where $(t_1,t_2)$ is one of the following pairs:
$$(\frac{1}{3},\frac{1}{3}), \ \
(\frac{1}{3},\frac{1}{4}), \ \
(\frac{1}{3},\frac{1}{5}), \ \
(\frac{1}{3},\frac{2}{5}), \ \
(\frac{1}{5},\frac{2}{5}). 
$$

\end{lemma}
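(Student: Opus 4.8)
The plan is to combine the minimal-angle technique of Lemmas~\ref{H} and~\ref{E} with the classification of finite spherical reflection groups recorded in the $\S^2$-case of Section~\ref{s-discrete}. To begin, since $C(Q)<4$ the realization is on $\S^2$, and by Lemma~\ref{2} all weights are $\le2$, so the mirrors $l_p,l_q,l_r$ pairwise intersect and bound a spherical triangle, acute because $Q$ is acyclic (Remark~\ref{acute}); by Lemma~\ref{rational} its angles are rational multiples of $\pi$, say $p=2\cos\theta_p$ with $\theta_p\in\pi\Q\cap(0,\pi/2]$, and similarly for $q,r$.

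Next I would argue that mutation-finiteness forces the reflection group $G=\langle r_p,r_q,r_r\rangle$ to be finite. A finite mutation class involves only finitely many triangles, hence finitely many mirrors; reflecting one occurring mirror in another must close up after finitely many steps, so each dihedral subgroup is finite and $G$ permutes a finite set of great circles, whence $G$ is a finite reflection group on $\S^2$. Since $Q$ is connected, $G$ is irreducible, so $G$ is $A_3$, $B_3$ or $H_3$ (the reducible $I_2(n)\times A_1$ being excluded as disconnected). Establishing ``$G$ finite'' rigorously from ``finite class'' is the point I would treat most carefully.

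The third step is to enumerate the possible acute triangles. A nontrivial product of two reflections in $A_3,B_3,H_3$ is a rotation lying in a group of order $12,24,60$, hence of order in $\{2,3\}$, $\{2,3,4\}$, $\{2,3,5\}$; the dihedral angle between two mirrors is $\pi j/m$ for such an order $m$, so the only acute angles that occur are $\pi/2,\pi/3$ for $A_3$, additionally $\pi/4$ for $B_3$, and additionally $\pi/5$ and $2\pi/5$ for $H_3$. Combined with the requirement that the three mirrors actually generate the finite group, the $\S^2$-classification of Section~\ref{s-discrete} leaves exactly the acute triples $(\pi/2,\pi/3,\pi/3)$, $(\pi/2,\pi/3,\pi/4)$, $(\pi/2,\pi/3,\pi/5)$, $(\pi/2,\pi/3,2\pi/5)$, $(\pi/2,\pi/5,2\pi/5)$, $(\pi/3,\pi/3,2\pi/5)$ and $(2\pi/5,2\pi/5,2\pi/5)$ (the tempting triple $(\pi/2,2\pi/5,2\pi/5)$ generating an infinite group and so being discarded). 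The first five, having a right angle, correspond to $r=0$ and yield precisely the five pairs in the statement.

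The final step, and the main obstacle, is to organize these seven triples into mutation classes and to pin down the listed representatives, for which I would use the Markov constant. Computing $C$ gives $2,\,3,\,\tfrac{5+\sqrt5}2,\,\tfrac{5-\sqrt5}2,\,3$ on the five right-angled triples, and $C=3$ on $(\pi/3,\pi/3,2\pi/5)=(1,1,2\cos\tfrac{2\pi}5)$ and $C=\tfrac{5-\sqrt5}2$ on $(2\pi/5,2\pi/5,2\pi/5)$. Since $C$ is a mutation invariant, the two non-right-angled acute triples must lie in the classes of $(2\cos\tfrac\pi5,2\cos\tfrac{2\pi}5,0)$ and $(1,2\cos\tfrac{2\pi}5,0)$ respectively (the two classes sharing $C=3$ being told apart by their groups, $B_3$ versus $H_3$). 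This accounts for all acute representatives, shows there are exactly five connected finite classes on $\S^2$, and identifies each with a right-angled (hence $r=0$) representative, so any such $Q$ is mutation-equivalent to one of the five. The genuinely nonelementary input is the discreteness classification deciding, among right-angled triangles containing the non-Coxeter angle $2\pi/5$, which reflection triples close up to the finite group $H_3$; confirming that the two representatives in each of the coincident-$C$ classes are actually joined by mutations (rather than merely sharing $(G,C)$) is the remaining verification.
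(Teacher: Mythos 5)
Your route is genuinely different from the paper's: the paper never discusses finiteness of the reflection group $G$, but instead uses Lemma~\ref{rational} plus a single mutation step to derive the trigonometric diophantine equation~(\ref{1}) and solves it via the Conway--Jones classification. Your final list of seven acute triangles and five classes agrees with the paper's outcome, but the step you yourself flag --- ``finite mutation class $\Rightarrow$ $G$ finite'' --- is a genuine gap, and the justification you sketch rests on a false principle. The claim ``a finite mutation class involves only finitely many triangles, hence finitely many mirrors'' conflates quivers with configurations: the mutation class is a set of quivers, i.e.\ of configurations up to isometry, and the set of reachable configurations maps many-to-one onto it. Indeed, one checks that $\mu_k^2$ acts on a configuration as the global reflection $r_k$ (this is essentially Remark~\ref{signed-mut}), so the reachable configurations contain the whole $G$-orbit of the initial triple. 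The Euclidean classes of Lemma~\ref{E} then refute your principle outright: $(2\cos(\pi/n),2\cos(\pi/n),2)$ is mutation-finite, yet its realization has two parallel mirrors, $G$ contains a translation of infinite order, and infinitely many distinct mirrors occur among reachable configurations. The follow-up inference ``each dihedral subgroup is finite, hence $G$ permutes a finite set of great circles, hence $G$ is finite'' fails as well: for the spherical triangle with angles $(\pi/2,2\pi/5,2\pi/5)$ every pair of side-reflections generates a finite dihedral group, yet $G$ is non-discrete (it is absent from the list in Section~\ref{s-discrete}), hence infinite. So on $\S^2$ the implication ``finite class $\Rightarrow$ finite $G$'' is true only a posteriori; to prove it you must show that quivers such as $(0,2\cos\frac{2\pi}{5},2\cos\frac{2\pi}{5})$ are mutation-infinite, i.e.\ follow the weights under mutation and show that a produced weight (here $4\cos^2\frac{2\pi}{5}$) is not twice the cosine of a rational multiple of $\pi$ --- and that is precisely the Conway--Jones analysis constituting the paper's proof. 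As written, your step 2 is either unjustified or circular.

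A second, much smaller, gap sits in your last step: mutation-invariance of $C(Q)$ (and of the group) gives only \emph{necessary} conditions for mutation equivalence, so matching invariants cannot by itself place $(1,1,-2\cos\frac{2\pi}{5})$ in the class of $(2\cos\frac{\pi}{5},2\cos\frac{2\pi}{5},0)$, nor $(2\cos\frac{2\pi}{5},2\cos\frac{2\pi}{5},-2\cos\frac{2\pi}{5})$ in that of $(1,2\cos\frac{2\pi}{5},0)$. You do need these identifications for the lemma as stated, since otherwise those two acute triangles would constitute additional finite classes missing from the list. The paper settles this by an explicit finite mutation computation; unlike the first gap, this one is routine to repair.
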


\begin{proof}
Recall from Lemma~\ref{rational} that the weights in mutation-finite quivers are of the form $2\cos \theta$, where $\theta$ is a rational multiple of $\pi$. 
To prove the lemma we will apply a mutation $\mu$ to $Q$ and check whether $\mu(Q)$ still satisfies this condition.

More precisely, we may assume that $Q$ is acyclic and 
$$Q=(p,q,-r)=(2\cos\frac{\pi t}{n},2\cos\frac{\pi s}{n}, -2\cos\frac{\pi m}{n} ),
$$
where $0<\frac{\pi t}{n}\le\frac{\pi s}{n}\le \frac{\pi m}{n}\le \frac{\pi}{2}$ and $n$ is an integer such that $\pi/n$ is the smallest angle in the realization of the mutation class of $Q$.
Applying the mutation preserving $p$ and $q$ and changing $r$ to $r'$ we get 
\begin{eqnarray*}
r'=pq+r&=&4\cos\frac{\pi t}{n}\cos\frac{\pi s}{n}+2\cos\frac{\pi m}{n}\\
       &=&2\cos\frac{\pi(s+t)}{n}+ 2\cos\frac{\pi(s-t)}{n}+2\cos\frac{\pi m}{n}.
\end{eqnarray*}
Notice that $r'$ should be also a double cosine of a rational multiple of $\pi$, or, more precisely, an integer multiple of $\pi/n$.
So, if $Q$ is mutation-finite, then there are integer numbers $s,t,m,k,n$  satisfying the equation
\begin{equation}
\label{1}
\cos\frac{\pi(s+t)}{n}+ \cos\frac{\pi(s-t)}{n}+\cos\frac{\pi m}{n}=\cos\frac{\pi k}{n}.
\end{equation}

Similar equations were considered by Conway and Jones in~\cite{CJ}. More precisely, it is shown in~\cite{CJ} that
given at most four rational multiples of $\pi$ lying between $0$ and $\pi/2$, and assuming that a rational linear combination of their cosines gives a rational number (but no proper subset has this property), this linear combination has to be one of the following:

\begin{center}
\begin{tabular}{l}
$\cos\pi/3=1/2$,\\
$-\cos\varphi +\cos(\pi/3-\varphi)+\cos(\pi/3+\varphi)=0$ \ \ ($0<\varphi<\pi/6$),\\
$\cos \pi/5-\cos2\pi/5=1/2$,\\
$\cos\pi/7-\cos2\pi/7+\cos3\pi/7=1/2$,\\
$\cos\pi/5-\cos\pi/15+\cos 4\pi/15=1/2$,\\
$-\cos2\pi/5+\cos2\pi/15-\cos 7\pi/15=1/2$,\\
\end{tabular}
\end{center}
or one of four other equations, each involving four cosines on the left and $1/2$ on the right. 

The latter four equations are irrelevant for us as they have too many terms to result in an equation of type~(\ref{1}). 
So, we need to consider the former six equations and a trivial identity  
$\cos\varphi+\cos \psi=\cos\varphi+\cos \psi$.
For each of these identities we match its terms to the terms of~(\ref{1}) (taking into account the signs of the terms, there can be up to 
six ways to do these matchings), and compute the values of $s,t,m,k,n$.
Most of the values obtained by this procedure are not relevant by one of the two reasons:
\begin{itemize}
\item[-] either the values $s,t,m,n$ correspond to a  triangle in $\H^2$ or $\E^2$, but not in $\S^2$ as needed;
\item[-] or the values $s,t,m,n$ do not correspond to an acute-angled triangle (which should be the case as we start with an acyclic quiver $Q$).
\end{itemize}

After removing irrelevant results, there are  $13$ cases left, some of them are still irrelevant (correspond to mutation-infinite quivers).
To exclude these, we check one more mutation and write an equation similar to~(\ref{1}) for $rq+p$ or $rp+q$.
Removing these, we result in five quivers listed in the lemma plus two more quivers:
$(1,1,-2\cos 2\pi/5)$ and $(2\cos 2\pi/5,2\cos 2\pi/5,-2\cos 2\pi/5)$,
which turned out to be mutation-equivalent to 
$(2\cos\pi/5,2\cos2\pi/5,0)$ and $(1,2\cos2\pi/5,0)$ respectively.


\end{proof}

\begin{remark}(Finite mutation classes, spherical case)
In Table~\ref{finite} we list the quivers belonging to the five finite mutation classes  described by Lemma~\ref{S}.
Notice that two of these classes contain two acyclic representatives which are not sink/source equivalent.

\end{remark}

\begin{table}[!h]
\begin{center}
\caption{Finite mutation classes with $C(Q)<4$}
\label{finite}
\begin{tabular}{|ll|c|}
\hline
&&\\
Acyclic quivers & Cyclic quivers&$C(Q)$\\
(up to sink/source)&&\\
\hline
&&\\
$(1,1,0)$& $ (1,1,1)$&2\\
&&\\
\hline
&&\\
$(1,\sqrt 2,0)$& $ (\sqrt 2,\sqrt 2,1)$&3\\
&&\\
\hline
&&\\
$(1,2\cos\pi/5,0)$& $ (2\cos\pi/5,2\cos\pi/5,1)$&$\frac{5+\sqrt 5}{2}$\\
&$ (2\cos\pi/5,2\cos\pi/5,2\cos\pi/5)$ &\\
&&\\
\hline
&&\\
$(2\cos\pi/5,2\cos 2\pi/5,0)$& $ (2\cos\pi/5,2\cos2\pi/5,1)$  &3\\
$(1,1,-2\cos 2\pi/5)$ & $ (1,1,2\cos\pi/5)$&\\
&&\\
\hline
&&\\
$(1,2\cos2\pi/5,0)$&$ (2\cos2\pi/5,2\cos2\pi/5,1)$&$\frac{5-\sqrt 5}{2}$\\
 $ (2\cos2\pi/5,2\cos 2\pi/5,-2\cos 2\pi/5)$ &&\\
&&\\
\hline

\end{tabular}
\end{center}
\end{table}



Corollary~\ref{2} together with Lemmas~\ref{H},~\ref{E} and~\ref{S} imply the following classification.

\begin{theorem}
\label{finite-thm}
Let $Q$ be a connected rank $3$ quiver with real weights. Then $Q$ is of finite mutation type if and only if it is mutation-equivalent
to one of the following quivers:
\begin{itemize}
\item[(1)] $(2,2,2)$;
\item[(2)]  $(2\cos (\pi/n ),2\cos(\pi/n),2)$,  $n\in \Z_+$;
\item[(3)] $(1,1,0)$, $(1,\sqrt 2,0)$, $(1,2\cos\pi/5,0)$, $(2\cos\pi/5,2\cos 2\pi/5,0)$, $(1,2\cos2\pi/5,0)$.
\end{itemize}

\end{theorem}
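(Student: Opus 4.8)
The plan is to assemble the theorem directly from the structural results already established, splitting into the two implications and, within each, organizing by whether $Q$ is mutation-cyclic or mutation-acyclic. The backbone is that every connected rank $3$ quiver falls into exactly one of the regimes already analyzed: mutation-cyclic, or mutation-acyclic with a realization by reflections in $\H^2$, $\E^2$, or $\S^2$, the space being dictated by the sign of $C(Q)-4$.

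For the forward implication I would first dispose of the mutation-cyclic case: if $Q$ is mutation-cyclic and mutation-finite, Corollary~\ref{cor 2} forces $Q=(2,2,2)$, giving family (1). If $Q$ is mutation-acyclic, Theorem~\ref{uniqueness}(1) supplies a realization by reflections, and Corollary~\ref{cor unique} tells me the ambient space is determined by the sign of $C(Q)-4$. I would then run through the three subcases: the hyperbolic case $C(Q)>4$ is excluded outright by Lemma~\ref{H}, which shows such a quiver is mutation-infinite; the Euclidean case $C(Q)=4$ is pinned down by the equivalence (a)$\Leftrightarrow$(c) of Lemma~\ref{E} to family (2); and the spherical case $C(Q)<4$ is pinned down by Lemma~\ref{S} to family (3). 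Since these four regimes are exhaustive and every mutation-finite quiver lands in one of them, this shows a mutation-finite quiver is mutation-equivalent to one of those listed.

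For the converse I would verify that each listed quiver really is mutation-finite. The Markov quiver $(2,2,2)$ is fixed by every mutation (the cyclic rule sends $r\mapsto pq-r=2$), so its class is a single quiver up to orientation. The family $(2\cos(\pi/n),2\cos(\pi/n),2)$ is covered by the implication (c)$\Rightarrow$(a) already contained in Lemma~\ref{E}. For the five spherical quivers I would argue that $C(Q)<4$ gives a realization by great circles meeting at angles that are rational multiples of $\pi$ with a common denominator (Lemma~\ref{rational} together with the angle-bookkeeping of Lemma~\ref{E}), so only finitely many weights $2\cos\theta$ can appear under mutation; alternatively, and more concretely, their full mutation classes can be exhibited directly as in Table~\ref{finite}.

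Because Lemmas~\ref{H},~\ref{E},~\ref{S} and Corollary~\ref{cor 2} already carry the analytic work, the residual difficulty is almost entirely bookkeeping, and the point I would watch most carefully is the boundary value $C(Q)=4$: the Markov quiver $(2,2,2)$ has $C(Q)=4$ but is mutation-cyclic, so it must be extracted through the cyclic branch via Corollary~\ref{cor 2} rather than confused with the acyclic Euclidean family (2). I would check explicitly that the cyclic branch and the three acyclic branches neither overlap nor omit a case. The one genuinely non-formal step is the converse for the spherical quivers, where I expect the cleanest verification to be simply exhibiting the finite mutation classes outright, as recorded in Table~\ref{finite}.
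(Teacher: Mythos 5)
Your proposal is correct and takes essentially the same route as the paper, whose proof consists precisely of assembling Corollary~\ref{cor 2} with Lemmas~\ref{H}, \ref{E} and~\ref{S}, with the converse for the spherical family resting on the explicit computation of the five finite classes recorded in Table~\ref{finite}. One small caution: your alternative ``angle-bookkeeping'' verification for the spherical converse does not transfer from $\E^2$ to $\S^2$ (on the sphere, rationality of the angles does not by itself force the mutated weights $pq\pm r$ to remain cosines of rational multiples of $\pi$ --- this is exactly why Lemma~\ref{S} invokes the Conway--Jones results), so the direct exhibition of the mutation classes, as in Table~\ref{finite}, is the argument to keep.
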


\begin{remark}
\label{finite type}
The five mutation classes in part (3) of Theorem~\ref{finite-thm} contain all rank $3$ quivers of ``finite type'', i.e. ones that can be modeled by reflections of finitely many vectors (see Remark~\ref{signed-mut} for the precise definition of mutation we use here). Namely, the first three correspond to types $A_3$, $B_3$ and $H_3$, the exchange graphs for these classes can be found in~\cite{FR}. The remaining two can also be modeled by reflections in some of the roots of the non-crystallographic root system $H_3$, we draw the corresponding ``exchange graphs'' in Fig.~\ref{exchange}.      
\end{remark}

\begin{remark}
\label{cover}
It is easy to see that the triangular domains corresponding to quivers in the mutation classes of $A_3$, $B_3$ and $H_3$ tessellate the $2$-sphere. The domains corresponding to quivers in the mutation classes of $(2\cos\pi/5,2\cos 2\pi/5,0)$ and $(1,2\cos2\pi/5,0)$ tessellate a torus which is a $2$- or $4$-fold covering of the sphere respectively.
 
\end{remark}

\begin{figure}
\psfrag{A}{\tiny  $a$}
\psfrag{B}{\tiny  $b$}
\psfrag{C}{\tiny  $c$}
\psfrag{D}{\tiny  $d$}
\psfrag{E}{\tiny  $e$}
\psfrag{F}{\tiny  $f$}
\psfrag{G}{\tiny  $g$}
\psfrag{H}{\tiny  $h$}
\psfrag{M}{\tiny  $m$}
\psfrag{N}{\tiny  $n$}
\epsfig{file=./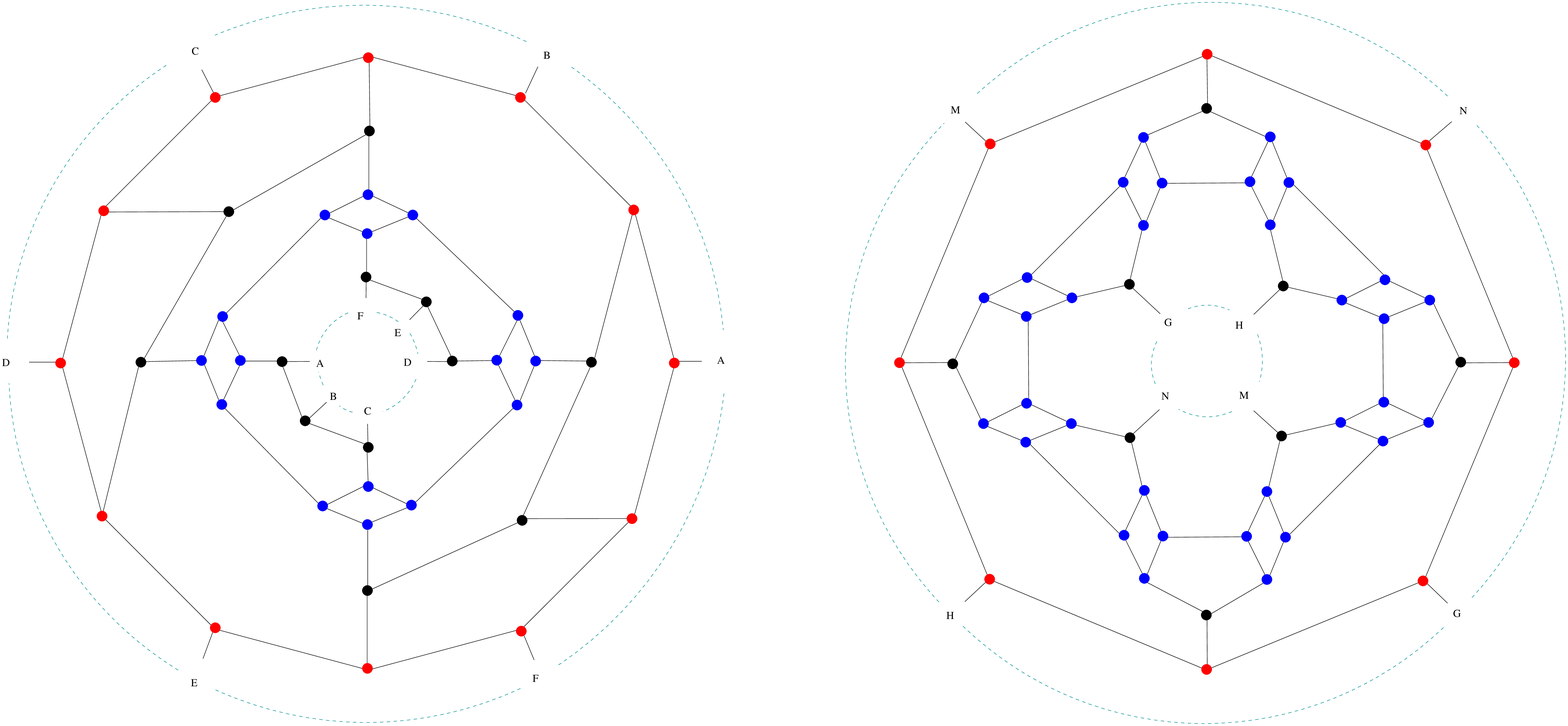,width=0.99\linewidth}
\caption{``Exchange graphs'' of the  mutation classes for quivers $(1,2\cos\pi/5,0)$ on the left and
$(2\cos\pi/5,2\cos 2\pi/5,0)$ on the right. Each graph contains two acyclic belts (blue and red) and can be drawn on a torus (by identifying the boundaries of annuli along the same letters).}
\label{exchange}
\end{figure}

\section{Acyclic representatives in infinite real mutation classes}
\label{s-ac}

Table~\ref{finite} shows that the structure of acyclic representatives in mutation classes of quivers with real weights is very different from the one we are used to see in the integer case. In particular, there may be acyclic representatives in the same mutation class which differ much more than just by a sequence of sink/source mutations.

\begin{lemma}
\label{line}
Let $Q=(p,q,r)$ be a mutation-acyclic quiver with $p<2$. 
Then iterating mutations $\mu_1$ and $\mu_2$ (so that $l_p$ and the weight $p$ are always preserved), one can always reach an acyclic representative in at most  $\lfloor\pi/\arccos\frac{p}{2}\rfloor$ mutations.

In particular, there is an acyclic representative containing weight $p$.
\end{lemma}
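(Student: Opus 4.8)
The plan is to run the rotating-pencil picture already set up in the proof of Lemma~\ref{rational}. Since $p<2$, the quiver is mutation-acyclic by Lemma~\ref{r<2}(a) and so admits a realization by reflections (Corollary~\ref{cor: ac by relf}); write $l_p,l_q,l_r$ for the three lines, so that $l_q$ and $l_r$ meet at a point $O$ at angle $\theta_p=\arccos\frac p2\in(0,\pi/2)$, while $p=2\cos\theta_p$ is the weight preserved by $\mu_1,\mu_2$. Iterating $\mu_1$ and $\mu_2$ (both of which fix $l_p$) reflects one of the two moving lines in the other, so after $j$ mutations the configuration is $(l_p,m_j,m_{j+1})$, where $m_0=l_q,m_1=l_r,m_2,\dots$ is the pencil of lines through $O$ at successive angular positions $0,\theta_p,2\theta_p,\dots$, with consecutive lines always at angle $\theta_p$.

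First I would establish the key criterion: the quiver $(l_p,m_j,m_{j+1})$ is acyclic if and only if the direction of the perpendicular dropped from $O$ onto $l_p$ lies in the closed angular sector between $m_j$ and $m_{j+1}$. I would prove this purely through the signs of the inner products of normal vectors, using property~(3) of Lemma~\ref{refl} (acyclic $\Leftrightarrow$ even number of positive products) rather than through metric features of a triangle, which keeps the argument uniform over $\S^2$, $\E^2$ and $\H^2$. The two ingredients are: (i) after normalizing orientations (allowed, since flipping one normal changes two signs and so preserves parity), the inner product of the normals of two consecutive moving lines keeps a fixed sign along the pencil; and (ii) for the fixed line $l_p$ and a moving line $m$, the inner product of their normals vanishes exactly when $m\perp l_p$ and changes sign exactly once as $m$ sweeps through all directions. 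Feeding (i)--(ii) into the parity count shows that the number of positive products changes parity precisely as the sector between the two moving lines sweeps past the perpendicular direction $\delta$, which yields the criterion.

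With the criterion in hand the counting is immediate. Taking $m_0$ as reference, the sector after $j$ mutations is $[\,j\theta_p,(j+1)\theta_p\,]$, and the perpendicular direction $\delta$, viewed in $[0,\pi)$, lies in the sector with index $j=\lfloor\delta/\theta_p\rfloor$. Since $\delta<\pi$ this gives $j\le\lfloor\pi/\theta_p\rfloor=\lfloor\pi/\arccos\frac p2\rfloor$, so the acyclic configuration is reached within the asserted number of mutations. As every configuration in this family keeps the weight $p$ between $m_j$ and $m_{j+1}$, the acyclic representative produced contains the weight $p$, giving the final assertion. The boundary case $\delta=k\theta_p$, where $m_k\perp l_p$, is genuinely acyclic: the corresponding domain has a right angle at the foot of the perpendicular, which is non-obtuse, so the configuration is acute-angled (Remark~\ref{acute}), and $k\le\lfloor\pi/\theta_p\rfloor$ as before.

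I expect the main obstacle to be making the criterion of the second paragraph fully rigorous and geometry-independent: namely pinning down ingredient~(i), the constancy of the moving--moving sign under the bookkeeping of normal orientations generated by the reflections, and ingredient~(ii), the single sign change of the $l_p$-product at orthogonality. Once this combinatorial--geometric dictionary between ``$\delta$ caught in the sector'' and ``even number of positive products'' is established, the rotation count and the boundary analysis are routine.
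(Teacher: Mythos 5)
Your proposal is correct and follows essentially the same route as the paper's proof: the same pencil of lines through $O$ generated by alternating $\mu_1,\mu_2$, the same criterion that a triple is acyclic exactly when the perpendicular from $O$ to $l_p$ lies in the current sector, and the same count yielding $\lfloor\pi/\arccos\frac{p}{2}\rfloor$ mutations. The only cosmetic difference is that the paper certifies the criterion via acute-angledness of the configuration (Remark~\ref{acute}), whereas you unwind that remark and argue directly with the sign-parity condition (3) of Lemma~\ref{refl} --- the same underlying fact.
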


\begin{proof}
Consider a realization $(l_p,l_q,l_r)$ of $Q$ by reflections and consider the triples of lines obtained from  $(l_p,l_q,l_r)$
by mutations  $\mu_1$ and $\mu_2$ applied alternately (see Fig.~\ref{irr}). If $n$ consecutive sectors cover the whole angle $2\pi$ around the common point $O$ of $l_q$ and $l_r$, then at least one of the corresponding $\lfloor (n+1)/2\rfloor$ triples is acute-angled (one can draw the line through $O$ orthogonal to $l_p$ and take the two mutations of $l_q$ and $l_r$ composing smallest angles with it). 

Since $\arccos\frac{p}{2}=\theta_p\ge\frac{2\pi}{n}$, we can take $n$ to be equal to $\lfloor2\pi/\arccos\frac{p}{2}\rfloor+1$.
As one needs to make $\lfloor(n-1)/2\rfloor$ mutations to obtain all the $\lfloor(n+1)/2\rfloor$ triples that produce $n$ sectors covering $2\pi$, the number of required mutations is then does not exceed $\lfloor\pi/\arccos\frac{p}{2}\rfloor$. 
\end{proof}

\begin{theorem}
\label{acyclic in S}
Let $Q=(p,q,r)$ be a mutation-acyclic quiver with $0<C(Q)<4$.
Then there exists an acyclic quiver $Q'$ which can be obtained from $Q$ in at most
 $\!\lfloor\pi/\!\arcsin\!\frac{\sqrt{4-C(Q)}}{2}\!\rfloor$ mutations.

\end{theorem}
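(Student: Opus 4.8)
The plan is to reduce the statement to Lemma~\ref{line} by proving that in the spherical realization of $Q$ \emph{every} angle between two of the lines is at least $\arcsin\frac{\sqrt{4-C(Q)}}{2}$. Since $Q$ is mutation-acyclic with $0<C(Q)<4$, Theorem~\ref{uniqueness}(1) together with the determinant computation in the proof of Corollary~\ref{cor unique} provides a realization by reflections in $\S^2$: the three lines are great circles, any two of them meet, and writing $\theta_{ij}\in(0,\pi/2]$ for the angle between the $i$-th and $j$-th lines we have $|b_{ij}|=2\cos\theta_{ij}<2$. In particular all three weights are smaller than $2$, so Lemma~\ref{line} applies after fixing any one weight $|b_{ij}|$ and reaches an acyclic representative in at most $\lfloor\pi/\arccos\frac{|b_{ij}|}{2}\rfloor=\lfloor\pi/\theta_{ij}\rfloor$ mutations.

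The core of the argument is the inequality
$$\frac{\sqrt{4-C(Q)}}{2}\le\sin\theta_{ij}\qquad\text{for every pair }(i,j).$$
To establish it I would pass to the unit normals $u_k=v_k/\sqrt2$. A direct expansion shows that the Gram determinant $\det\big((v_i,v_j)\big)$ equals $2\,(4-C(Q))$ for every sign pattern admissible under condition~(3) of Lemma~\ref{refl} (this is the determinant already appearing in Corollary~\ref{cor unique}; replacing some $v_k$ by $-v_k$ flips two off-diagonal entries simultaneously, so the value does not depend on the admissible choice). Hence $\det\big((u_i,u_j)\big)=\frac{4-C(Q)}{4}$, and as this Gram determinant is the square of the volume of the parallelepiped spanned by $u_1,u_2,u_3$, that volume equals $\frac{\sqrt{4-C(Q)}}{2}$. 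Factoring the volume by the spherical sine rule as $\sin\psi_{ab}\,\sin\psi_{ac}\,\sin\Phi_a$, where $\psi_{k\ell}=\angle(u_k,u_\ell)$ and $\Phi_a$ is the angle of the spherical triangle $u_1u_2u_3$ at $u_a$, and noting that $\cos\psi_{k\ell}=\pm\cos\theta_{k\ell}$ forces $\sin\psi_{k\ell}=\sin\theta_{k\ell}$ while the two remaining factors are at most $1$, I obtain the claimed bound for each pair. (This is the spherical counterpart of the identity $2\lambda=\sqrt{4-C}$ of Proposition~\ref{lambda}.)

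With the estimate in hand the theorem follows. Because every $\theta_{ij}\le\pi/2$ and $\arcsin$ is increasing, the inequality yields $\theta_{ij}\ge\arcsin\frac{\sqrt{4-C(Q)}}{2}$ for all three pairs. Taking $p$ to be the smallest of the three weights, so that the preserved angle $\theta_{ij}=\arccos\frac{p}{2}$ is the largest, and applying Lemma~\ref{line}, we obtain an acyclic quiver $Q'$ after at most $\lfloor\pi/\theta_{ij}\rfloor\le\lfloor\pi/\arcsin\frac{\sqrt{4-C(Q)}}{2}\rfloor$ mutations, the last inequality by monotonicity of the floor. The one genuinely technical point is the Gram/volume identity of the second paragraph; once the sign-independence of the determinant and the sine-rule factorization are in place, the bound drops out of Lemma~\ref{line}.
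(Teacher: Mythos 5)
Your proof is correct, and its skeleton matches the paper's: realize $Q$ by reflections in $\S^2$, establish the uniform lower bound $\theta\ge\arcsin\frac{\sqrt{4-C(Q)}}{2}$ on the angles of the configuration, and feed this into Lemma~\ref{line}. The only genuine difference is how the key identity (product of sines $=\tfrac12\sqrt{4-C(Q)}$) is obtained. The paper works with the triangle cut out by the three lines and derives $\lambda=\sin a\,\sin\beta\,\sin\gamma=\tfrac12\sqrt{4-C(Q)}$ from the spherical sine rule together with the second spherical cosine law, in parallel with Proposition~\ref{lambda}. You instead work with the polar-dual data, namely the unit normals $u_i$: you compute the Gram determinant $\frac{4-C(Q)}{4}$ (the same determinant that appears in Corollary~\ref{cor unique}, with the sign-independence under condition (3) of Lemma~\ref{refl} checked explicitly), identify its square root with the parallelepiped volume, and factor that volume as $\sin\psi_{ab}\,\sin\psi_{ac}\,\sin\Phi_a$; all of these steps are valid, and the factorization is exactly the triple-product form of the spherical sine rule. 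What your route buys is a self-contained linear-algebra derivation that avoids quoting spherical trigonometric laws and makes transparent why the bound does not depend on the choice of normals; the paper's version is shorter on the page because it cites the trig formulas directly and reuses the $\lambda$-computation of Proposition~\ref{lambda}. One further small economy on your side: since the bound in Lemma~\ref{line} depends only on an angle of the \emph{initial} configuration, you do not need the paper's closing appeal to mutation-invariance of $C(Q)$ — bounding the three angles of $Q$ itself suffices, as you do.
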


\begin{proof}
Consider the realization $(l_p,l_q,l_r)$ of $Q$ by reflections. As $C(Q)<4$, this realization is a configuration of $3$ lines on a sphere.
By Lemma~\ref{line}, it is sufficient to show the angles in the realization of other quivers in the mutation class cannot be too small.
We will show that they cannot become smaller than $\arcsin\frac{\sqrt{4-C(Q)}}{2}$.

To show this we follow the same ideas as in the proof of Lemma~\ref{lambda}.
Namely, we choose a triangle bounded by  $(l_p,l_q,l_r)$ and denote the lengths of its sides by $a,b,c$ and the opposite angles by $\alpha,\beta,\gamma$. 
Then we show that 
$$
\lambda:=\sin a \sin \beta \sin \gamma=\sin b \sin\alpha \sin \gamma =\sin c \sin \beta\sin \gamma
= \frac{1}{2}\sqrt{4-C(Q)}.
$$
Here all but the last equalities follow from the spherical sine law, and the last equality follows from spherical  second cosine law
$$
\cos a =\frac{\cos \beta \cos\gamma -\cos \alpha}{\sin \beta \sin \gamma}
$$
while taking in mind that 
$$
p=2\cos \alpha, \qquad q=2\cos\beta, \qquad r=2\cos \gamma. 
$$
In particular, we see that $$\sin\gamma\ge \sin a \sin \beta \sin \gamma =  \frac{1}{2}\sqrt{4-C(Q)}.$$
                 
As $C(Q)$ is independent on the representative in the mutation class, we have the same estimate for every angle in every triangle we can obtain by mutations of  $(l_p,l_q,l_r).$

\end{proof}

\begin{remark}
There is no counterpart of Theorem~\ref{acyclic in S} for the case of $C(Q)\ge 4$ (i.e. for Euclidean and hyperbolic realizations). 
Indeed, take any triple of lines  $(l_p,l_q,l_r)$ in $\E^2$, where $l_q$ and $l_r$ form a $\pi$-irrational angle $\theta_p$. Then one can use mutations $\mu_1$ and $\mu_2$ to obtain a triple of lines with (at least one) arbitrary small angle. Repeating the same but now centered in the smallest angle, we can get a triple of lines with two angles arbitrary small (and thus the third one arbitrary close to $\pi$), i.e. a triple of almost collinear lines. It is easy to see that this cannot be turned into an acute-angled configuration in a predefined number of mutations. 

\end{remark}

\end{document}